\title{Stable pairs on nodal $K3$ fibrations}
\date{}
\author{Amin Gholampour and Artan Sheshmani and Yukinobu Toda}
\DeclareFontFamily{U}{rsfs}{%
\skewchar\font127}
\DeclareFontShape{U}{rsfs}{m}{n}{%
<-6>rsfs5<6-8.5>rsfs7<8.5->rsfs10}{}
\DeclareSymbolFont{rsfs}{U}{rsfs}{m}{n}
\DeclareRobustCommand*\rsfs{%
\@fontswitch\relax\mathrsfs}
\theoremstyle{plain}
\newtheorem{thm}{Theorem}[section]
\newtheorem{theorem}{Theorem}
\newtheorem{prop}[thm]{Proposition}
\newtheorem{lem}[thm]{Lemma}
\newtheorem{defi}[thm]{Definition}
\newtheorem{rmk}[thm]{Remark}
\newtheorem{cor}[thm]{Corollary}
\newtheorem{step}{Step}
\newtheorem{prop-defi}[thm]{Proposition-Definition}
\newtheorem{thm-defi}[thm]{Theorem-Definition}
\newtheorem{lem-defi}[thm]{Lemma-Definition}
\newtheorem{conj}[thm]{Conjecture}
\newtheorem{exam}[thm]{Example}
\newdimen\argwidth
\def\db[#1\db]{
 \setbox0=\hbox{$#1$}\argwidth=\wd0
 \setbox0=\hbox{$\left[\box0\right]$}
  \advance\argwidth by -\wd0
 \left[\kern.3\argwidth\box0 \kern.3\argwidth\right]}
\newcommand{\X}{\mathfrak{X}}
\newcommand{\CC}{\mathbb{C}}
\renewcommand{\P}{\mathbb{P}}
\newcommand{\bP}{{\bf{P}}}
\newcommand{\C}{C}
\newcommand{\V}{\mathcal{V}}
\renewcommand{\O}{\mathcal{O}}
\newcommand{\Z}{\mathbb{Z}}
\newcommand{\Q}{\mathbb{Q}}
\newcommand{\F}{\mathcal{F}}
\newcommand{\K}{\mathcal{K}}
\newcommand{\GG}{\mathbb{G}}
\newcommand{\Extt}{\mathcal{E}xt}
\newcommand{\M}{\mathcal{M}}
\newcommand{\hilb}{\operatorname{Hilb}}
\newcommand{\Ib}{I^{\bullet}}
\newcommand{\Ibb}{\mathbb{I}^{\bullet}}
\newcommand{\kt}{\mathsf{K}3}
\newcommand{\aA}{\mathcal{A}}
\newcommand{\bB}{\mathcal{B}}
\newcommand{\cC}{\mathcal{C}}
\newcommand{\dD}{\mathcal{D}}
\newcommand{\eE}{\mathcal{E}}
\newcommand{\fF}{\mathcal{F}}
\newcommand{\gG}{\mathcal{G}}
\newcommand{\hH}{\mathcal{H}}
\newcommand{\kK}{\mathcal{K}}
\newcommand{\lL}{\mathcal{L}}
\newcommand{\mM}{\mathcal{M}}
\newcommand{\oO}{\mathcal{O}}
\newcommand{\pP}{\mathcal{P}}
\newcommand{\qQ}{\mathcal{Q}}
\newcommand{\sS}{\mathcal{S}}
\newcommand{\tT}{\mathcal{T}}
\newcommand{\uU}{\mathcal{U}}
\newcommand{\vV}{\mathcal{V}}
\newcommand{\Supp}{\mathop{\rm Supp}\nolimits}
\newcommand{\Hom}{\mathop{\rm Hom}\nolimits}
\newcommand{\dR}{\mathbf{R}}
\newcommand{\dL}{\mathbf{L}}
\newcommand{\Hilb}{\mathop{\rm Hilb}\nolimits}
\newcommand{\Pic}{\mathop{\rm Pic}\nolimits}
\newcommand{\tC}{\widetilde{C}}
\newcommand{\tX}{\widetilde{X}}
\newcommand{\tpi}{\widetilde{\pi}}
\newcommand{\id}{\textrm{id}}
\newcommand{\ch}{\mathop{\rm ch}\nolimits}
\newcommand{\td}{\mathop{\rm td}\nolimits}
\newcommand{\Ext}{\mathop{\rm Ext}\nolimits}
\newcommand{\rank}{\mathop{\rm rank}\nolimits}
\newcommand{\Coh}{\mathop{\rm Coh}\nolimits}
\newcommand{\Per}{\mathop{\rm Per}\nolimits}
\newcommand{\cneq}{\mathrel{\raise.095ex\hbox{:}\mkern-4.2mu=}}
\newcommand{\eqcn}{\mathrel{=\mkern-4.5mu\raise.095ex\hbox{:}}}
\newcommand{\Cok}{\mathop{\rm Cok}\nolimits}
\newcommand{\Aut}{\mathop{\rm Aut}\nolimits}
\newcommand{\DT}{\mathop{\rm DT}\nolimits}
\newcommand{\PT}{\mathop{\rm PT}\nolimits}
\newcommand{\Imm}{\mathop{\rm Im}\nolimits}
\newcommand{\imm}{\mathop{\rm im}\nolimits}
\newcommand{\Ker}{\mathop{\rm Ker}\nolimits}
\newcommand{\cl}{\mathop{\rm cl}\nolimits}
\begin{document}
\maketitle

\begin{abstract}
We study
Pandharipande-Thomas's stable pair theory
on $K3$ fibrations over curves with possibly nodal fibers. 
We describe stable pair invariants of the fiberwise irreducible
curve classes in terms of Kawai-Yoshioka's formula for the Euler characteristics of moduli spaces of stable pairs on $K3$ surfaces and Noether-Lefschetz numbers of the fibration. Moreover, we investigate the relation of these invariants with the perverse (non-commutative) stable pair invariants of the $K3$ fibration. In the case that the $K3$ fibration is a projective Calabi-Yau threefold, by means of wall-crossing techniques, we write the stable pair invariants 
in terms of the generalized Donaldson-Thomas invariants of 2-dimensional Gieseker semistable sheaves supported on the fibers.  
\end{abstract}

\setcounter{tocdepth}{1}
\tableofcontents


\section{Introduction}
\subsection{Overview}
The Fourier-Mukai transform is an 
equivalence of derived
categories of coherent sheaves 
of algebraic varieties, 
which was first studied
for dual pairs of abelian varieties in the seminal paper 
of Mukai~\cite{Mu1}.
When a Calabi-Yau 3-fold 
admits an elliptic fibration, 
its relative Fourier-Mukai transform
turned out to be an effective
tool 
to prove the 
correspondence
between one
dimensional sheaf theory and 
two dimensional sheaf theory~\cite{FM-mirror}, \cite{T-dual}. 
In particular, it provides 
a recipe to relate 
two kinds of invariants:
 \textit{Pandharipande-Thomas stable pair invariants}~\cite{a17}
which count curves in Calabi-Yau 3-folds, 
and \textit{Donaldson-Thomas invariants}
which count two dimensional 
semistable sheaves on them~\cite{a20}.  

The above correspondence is 
also important in string theory. 
The relative Fourier-Mukai transform 
is a mathematical
interpretation of relative T-duality in string theory, 
which was used by physicists
\cite{Klemm:2012sx, E-string, Black-T-dual}
to discuss the correspondence between the BPS theory of D2 branes wrapped on the base and the D4 branes which also wrap the elliptic fibers.
The PT stable pairs and the
two dimensional semistable sheaves are the
mathematical analogues of the D6-D2-D0 branes, 
and the D4 branes respectively,
and the relative T-duality
provides a recipe to relate 
to the BPS state counting of these D-brane
systems.


In a more general setting, without assuming that 
the threefold admits an elliptic fibration, we 
can still ask the question regarding the 
correspondence 
of PT stable pair theory and 
the DT theory of two dimensional 
semistable sheaves
(i.e. D6-D2-D0 BPS theory and D4 theory in string theory). 
This question shapes the main motivation behind the current articles. 
In~\cite{TodBg}, the third author 
derived the formula relating 
 the low degree terms of the 
generating series of the latter invariants with 
those of the former invariants, 
using the wall-crossing technique and some ideas 
from string theory~\cite{DM}. 
In \cite{G-S}, the first and the second authors studied the generating series of DT invariants counting two dimensional stable sheaves 
on $K3$ fibered threefolds with possibly nodal fibers, 
and proved their  
modularity property.  

The purpose of this article is to 
obtain an analogue of the results in~\cite{G-S} 
for PT stable pair theory
on $K3$ fibered threefolds. 
 Namely 
we investigate the modularity property of 
the stable pair
invariants on 
$K3$ fibrations. 
We first treat the case of 
a smooth $K3$ fibration, and 
relate the stable pair invariants 
with the generating series satisfying the 
modularity property. 
When the $K3$ fibration has possibly nodal fibers,
we relate their stable pair invariants with 
those on smooth $K3$ fibrations via conifold transition. 
In a more general situation, we 
also prove a formula 
relating stable pair invariants with the 
DT invariants counting two dimensional semistable sheaves, 
 where the latter invariants are expected to have the 
modular invariance property. 



\subsection{Main result I: smooth $K3$ fibration}
Let $X$ be a
smooth projective threefold over $\mathbb{C}$. 
For $\beta \in H_2(X, \Z)$ and $n\in \Z$, the moduli space of stable pairs $\pP_n(X,\beta)$ and the associated invariants were introduced by Pandharipande and Thomas \cite{a17}.  
The moduli space $\pP_n(X, \beta)$
 parametrizes pairs 
\begin{align}\label{two:term}
s \colon \O_{X} \to F
\end{align}
 where $F$ is a pure 1-dimensional sheaf on $X$ with $[F]=\beta$ and $\chi(F)=n$, and the cokernel of $s$ is 0-dimensional. 
They 
constructed a perfect obstruction theory on $\pP_n(X,\beta)$ (see Theorem \ref{PT})
by interpreting it as the moduli space of 
two term complexes (\ref{two:term}) in the derived category. 
When the virtual dimension of 
$\pP_n(X, \beta)$ is zero, the stable pair invariant 
$P_{n,\beta}$ is defined by taking the degree of the virtual cycle obtained from this obstruction theory. 

We assume that $X$ admits a morphism
\begin{align}\label{intro:pi}
\pi \colon X \to C
\end{align}
onto a smooth projective curve $C$, whose 
generic fiber is a smooth $K3$ surface. 
The morphism $\pi$ is called a \textit{K3 fibration}. 
We always take the 
curve class $\beta$ 
to be in the kernel of $\pi_*$ denoted by $H_2(X,\Z)^\pi$. The virtual dimension of $\pP_n(X, \beta)$ is zero for such curve classes\footnote{Note that $X$ is not required to be a Calabi-Yau 3-fold here.}.  
We define the following generating series:
\begin{align}\label{def:gen}
\PT(X)_{\beta}=\sum_{n\in \mathbb{Z}}
P_{n, \beta}q^n, \ 
\PT(X)=\sum_{\beta \in H_2(X, \mathbb{Z})^{\pi}}
\PT(X)_{\beta}t^{\beta}. 
\end{align} 

In Section~\ref{sec:PTNL}, we 
study the case when $\pi$ is a
smooth morphism. The majority of this section (Subsections 2.1-2.6) is devoted to the case where
$\beta$ is an irreducible  
curve class. In this case, we relate $P_{n, \beta}$ with 
the Euler characteristics of the moduli space of stable pairs on a 
nonsingular $K3$ surface $S$ and the Noether-Lefschetz numbers 
$NL^{\pi}_{h,\beta}$ of the fibration having modular properties \cite{a90, a116, a125, a117}. 
We denote by 
$\pP_{n}(\kt,h)$ the moduli space of stable pairs on a $K3$
surface containing an irreducible curve class $\gamma$
satisfying $\gamma^2=2h-2$.
It is known that the topological type
of $\pP_n(\kt,h)$ only depends on 
$(n, h)$ (cf.~\cite{MPT}). 
 \begin{theorem}\emph{(Theorem~\ref{thm:main formula})} \label{thm:PTNL}
Let $\pi \colon X\to C$ be a smooth $K3$ fibration, 
and $\beta \in H_2(X, \mathbb{Z})^{\pi}$ be an irreducible 
curve class. 
Then we have the following formula: 
\begin{align*}
\PT(X)_\beta&=\sum_{h=0}^{\infty}\sum_{n=1-h}^{\infty}(-1)^{n-1}\chi(\pP_{n}(\kt,h))\cdot NL^{\pi}_{h,\beta}\; q^{n}. 
\end{align*}
\end{theorem}
The proof of Theorem \ref{thm:PTNL} involves studying the restriction of the obstruction theory constructed in \cite{a17} to isolated and non-isolated components of $\pP_n(X,\beta)$. 
The Euler characteristics $\chi(\pP_{n}(\kt,h))$ in Theorem \ref{thm:PTNL} can be read off from Kawai-Yoshioka's formula having modular properties \cite[Theorem 5.80]{a79}: 
\begin{align} \label{equ:KW}
&
\sum_{h=0}^{\infty}\sum_{n=1-h}^{\infty}(-1)^{n-1}\chi(\pP_{n}(\kt,h))y^{n}q^{h}=\notag\\
&
-\left(\sqrt{-y}-\frac{1}{\sqrt{-y}}\right)^{-2}\prod_{n=1}^{\infty}\frac{1}{(1-q^{n})^{20}(1+yq^{n})^{2}(1+y^{-1}q^{n})^{2}}.
\end{align}

In Subsection \ref{sec:reducible}, we study a special analog of Theorem \ref{thm:PTNL} in which the class $\beta \in H_2(X, \mathbb{Z})^{\pi}$ is allowed to be reducible.

\subsection{Main result II: nodal $K3$ fibration}
We next study the case that the 
$K3$ fibration (\ref{intro:pi})
has a finite number of nodal fibers.
In this case, we call a morphism (\ref{intro:pi}) 
 a \textit{nodal K3 fibration}.  
For a nodal $K3$ fibration,  
we find a conifold transition formula, 
which relate the stable pair invariants on nodal $K3$ fibrations with 
those on smooth $K3$ fibrations. 
The following result is based on assuming some foundation of stable pair theory of algebraic spaces that has not been established yet (see Remarks \ref{rem:nonproj} and \ref{alg-space-degen}): 
\begin{theorem}\emph{(Theorem~\ref{thm:formula:relation})}
Let $\pi \colon X \to C$
be a nodal $K3$ fibration. 
Then there is a 
smooth $K3$ fibration\footnote{Here the situation is a little more 
general than (\ref{intro:pi}), and we allow $\widetilde{X}$ to 
be a proper algebraic space.} 
$\widetilde{\pi} \colon 
\widetilde{X} \to \widetilde{C}$
and morphisms
\begin{align*}
\widetilde{\epsilon} \colon 
\widetilde{X} \stackrel{h}{\to} X_0 \stackrel{\epsilon}{\to} X
\end{align*}
where $h$ is a small resolution and 
$\epsilon$ is a double 
cover, such that the 
following formula holds: 
\begin{align}\label{formula:relation}
\widetilde{\epsilon}_{\ast} \frac{\PT(\tX)}{\PT^{h}(\tX)}
= \PT(X)^2. 
\end{align}
Here $\widetilde{\epsilon}_{\ast}$
is the variable change $t^{\beta} \mapsto 
t^{\widetilde{\epsilon}_{\ast}\beta}$, 
and $\PT^h(\tX)$ is the subseries of $\PT(\tX)$
of stable pair invariants of curve classes $\widetilde{\beta}$
with $h_{\ast} \widetilde{\beta}=0$. 
\end{theorem}
Let us consider the invariant
$P_{n, \beta}$ with an irreducible curve class
$\beta$
on a nodal $K3$ fibration $\pi \colon X \to C$. 
The above result 
leads us to  
working with the stable pair invariants 
$P_{\tilde{n},\tilde{\beta}}$ 
for possibly reducible classes $\tilde{\beta}$ on 
a smooth $K3$ fibration $\widetilde{X} \to \widetilde{C}$
lifting $\beta$. In Subsection \ref{sec:noncomm}, we present an approach to compute the invariants $P_{\tilde{n},\tilde{\beta}}$ by studying the relationship between the invariants $P_{\tilde{n},\tilde{\beta}}$ and the invariants of the moduli spaces 
of perverse (non-commutative) stable pairs. 

\subsection{Main result III: product expansion formula}
Finally let $X$ be a smooth projective Calabi-Yau 3-fold, 
which admits a $K3$ fibration $\pi \colon X \to \mathbb{P}^1$
such that every fiber is an integral scheme. 
In Section \ref{sec:wall}, using the wall-crossing techniques mostly developed by the third author \cite{Tcurve2, Tcurve1, Tolim2, Tsurvey, TodK3, TodHall}, 
we express the generating series (\ref{def:gen}) in terms of the generalized Donaldson-Thomas invariants 
\begin{align}\label{intro:J}
J(r,\beta,n) \in \mathbb{Q}
\end{align}
which count semistable sheaves $E$ supported on the fibers
of $\pi$ satisfying 
\begin{align} \label{equ:chern} \ch(E)=(0, r[F], \beta, n)\in
H^0(X) \oplus H^2(X) \oplus H^4(X) \oplus H^6(X). 
\end{align} 
Here $[F]$ is the class of a fiber of $\pi$
in $X$. The invariants (\ref{intro:J})
are defined by the method of Joyce-Song \cite{JS}. These invariants were studied by the first and second authors in \cite{G-S} when there are no strictly semistable sheaves $F$ satisfying \eqref{equ:chern}. 
The modularity property of the invariants (\ref{intro:J})
holds in the situation of~\cite{G-S}, and we expect 
such a property in general. 
The following result gives a complete answer on 
describing the relationship between stable pair invariants and 
DT type invariants counting two dimensional sheaves: 
 \begin{theorem}\label{thm:wall}\emph{(Theorem~\ref{thm:WCF})}
We have the following formula
\begin{align*}\notag
\PT(X)=\prod_{r\ge 0, \beta>0, n\ge 0} &
\exp\left((-1)^{n-1}J(r, \beta, r+n)q^n t^{\beta} \right)^{n+2r} \\
\label{main:formula}
&\cdot \prod_{r> 0, \beta>0, n>0}
\exp\left((-1)^{n-1}J(r, \beta, r+n) q^{-n}t^{\beta} \right)^{n+2r}.
\end{align*}
\end{theorem}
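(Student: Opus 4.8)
The plan is to realize both sides of the formula as counting invariants in a single triangulated category and to connect them by a wall-crossing in the space of weak stability conditions, following the framework of \cite{Tolim2, Tcurve1, Tcurve2}. First I would fix the fiber class $\beta\in H_2(X,\Z)^\pi$ and work inside the subcategory of $D^b(\Coh X)$ generated by $\O_X$ and the sheaves supported on fibers, equipped with a tilted heart $\aA$ containing both the stable pair complexes $I^\bullet=[\O_X\to \F]$ (with $\F$ pure of dimension one and class $\beta$) and the shifts of the Gieseker semistable fiber sheaves $\F$ with $\ch(\F)=(0,rF,\beta,n)$. On $\aA$ one has Toda's one-parameter family of weak stability conditions; the essential point is that as the parameter varies there is a \emph{PT chamber}, in which the only stable objects of the relevant numerical type are exactly the stable pairs counted by $P_{n,\beta}$, and a \emph{Gieseker chamber}, in which the relevant stable objects are the two-dimensional fiber sheaves counted by the generalized invariants $J(r,\beta,n)$.

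Next I would run the Joyce--Song wall-crossing formula \cite{JS} between these two chambers. Invoking Conjecture \ref{conj:critical} --- which provides the local description of the moduli stacks as critical loci, and hence the Behrend-function identities needed to apply the integration map to the motivic Hall algebra --- each wall contributes a factor to the Poisson-torus generating series governed by the invariant of the objects that become semistable there. The key geometric input, coming from the $K3$ fibration structure and the assumption $\beta\in H_2(X,\Z)^\pi$, is that between the PT and Gieseker chambers the only objects that can destabilize a rank-one complex $I^\bullet$ are the fiber-supported semistable sheaves $\F$; hence precisely the invariants $J(r,\beta,r+n)$ appear, and no other sheaf classes enter the product.

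The exponents are then extracted from the Euler pairing on the Calabi--Yau threefold $X$. By Riemann--Roch, for $\ch(\F)=(0,rF,\beta,n)$ one has $\chi(\F)=\int_X\ch(\F)\,\td(X)=n+\tfrac{1}{12}\,(rF)\cdot c_2(X)=n+2r$, where I use that a fiber is a $K3$ surface, so that $F\cdot c_2(X)=e(K3)=24$. It is this computation that produces the exponent $n+2r$ attached to each factor $\exp\bigl((-1)^{n-1}J(r,\beta,r+n)\,q^{\pm n}t^\beta\bigr)$, while the sign $(-1)^{n-1}$ is the Behrend-function sign entering the Poisson bracket. The two products, carrying $q^{n}$ and $q^{-n}$, correspond to the two ways a semistable sheaf $\F$ can interact with $I^\bullet$ across a wall --- as a sub-object and as a quotient --- which on a Calabi--Yau threefold are interchanged by the antisymmetry $\chi(E,F)=-\chi(F,E)$ of the Euler form; the restriction $r>0,\ n>0$ on the second product reflects that the boundary cases $r=0$ or $n=0$ only contribute on one side.

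The hard part will be the first step: constructing the explicit family of weak stability conditions that genuinely interpolates between the stable-pair moduli and the moduli of Gieseker semistable fiber sheaves, and proving that across the intervening walls no objects other than the fiber-supported sheaves $\F$ can appear, so that the wall-crossing product closes up to exactly the claimed $J(r,\beta,r+n)$-factors. Controlling this, together with the careful bookkeeping of the Euler pairings and signs needed to match the two-product form, and the dependence of the whole argument on the still-conjectural critical structure of Conjecture \ref{conj:critical}, is where the real work lies.
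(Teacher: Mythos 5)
Your general framework is the right one --- Toda-style weak stability conditions, the Joyce--Song wall-crossing formula, Conjecture \ref{conj:critical} supplying the critical-locus description needed for the Behrend-function identities, and the exponent $n+2r$ arising as an Euler pairing (your Riemann--Roch computation is correct) --- and this is indeed the skeleton of the paper's proof of Theorem \ref{thm:WCF}. But the central architectural claim of your proposal, a single wall-crossing between a ``PT chamber'' and a ``Gieseker chamber'' in which ``the relevant stable objects are the two-dimensional fiber sheaves counted by $J(r,\beta,n)$'', is a genuine gap and cannot be implemented as stated. A wall-crossing formula compares moduli of objects of a \emph{fixed} numerical class under two stability conditions; a stable pair complex has a rank-one class, while the sheaves entering $J(r,\beta,n)$ have rank zero, so there is no stability parameter at which the rank-one moduli problem ``becomes'' the Gieseker moduli of fiber sheaves. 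Indeed, if the theorem arose from a two-chamber comparison, the right-hand side would have the form (wall factors)$\times$(count in the second chamber); in fact the whole right-hand side is a product of exponentials of $J$'s, i.e. it consists only of wall factors --- which already tells you that the far end of the wall-crossing must count something trivial, not Gieseker sheaves.

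That is exactly how the paper proceeds: it works in the enlarged category $\dD=\langle \pi^{*}\Pic(\P^1),\Coh_{\pi}(X)\rangle_{\rm{tr}}$, so that the rank-one objects are pairs $[\pi^{*}\lL \to \F]$ tracked by an auxiliary variable $s$, and it runs three separate wall-crossings rather than one. First, $\PT(X)$ is traded for the invariants $L(\beta,n)$ counting limit-stable (perverse) pairs, which produces the $r=0$ factors $\exp((-1)^{n-1}J(0,\beta,n)q^nt^{\beta})^{n}$. Second, $\sum L(\beta,n)q^nt^{\beta}s^r$ is identified with the $t\to\infty$ limit of the DT-type series $\DT_{t\omega}(X)$ for the family $\sigma_{t\omega}$, and as $t$ decreases to $0$ one crosses infinitely many walls, located at $n=\tfrac{1}{2}rt_0^{2}\omega|_{X_p}^{2}$, each contributing $\exp\left((-1)^{n-1}J(r,\beta,r+n)q^nt^{\beta}s^r\right)^{\epsilon(r)(n+2r)}$ with $rn>0$; the $q^{-n}$ product in the statement comes from the walls with $r<0$, $n<0$ after dualizing (this is your ``sub-object versus quotient'' heuristic, but it is organized through the sign $\epsilon(r)$ and the symmetry $J(-v)=J(v)$, not as a separate chamber phenomenon). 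Third, the $t\to +0$ limit $\widehat{\DT}(X)$ is still not a Gieseker count: it requires yet another one-parameter family $(\widehat{Z}_{\omega,\theta},\aA_{\omega}(1/2))$ to decompose it into the $n=0$ factors $\exp(-J(r,\beta,r)t^{\beta}s^r)^{2r}$ times the genuinely trivial series $\sum_{r\in\Z}s^r$ counting the objects $\pi^{*}\lL$ alone; only then does one take the $s^0$-term. So beyond the chamber misconception, your proposal is missing the enlarged category and $s$-variable bookkeeping, and the first and third wall-crossings, which are precisely what produce the boundary factors with $r=0$ and $n=0$ in the two products.
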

The above 
result is proved by generalizing the argument in~\cite{TodK3}
showing a similar result when $X=S \times \mathbb{C}$
for a $K3$ surface $S$. 
Namely, we apply Joyce-Song's 
wall-crossing formula~\cite{JS}
in a certain space of weak stability conditions. 
As the arguments are the almost same as in~\cite{TodK3}
except a few modifications, 
we will just outline its proof in Section~\ref{sec:wall}.  

\subsection{Summary}
The main purpose of the current article is to find the connection between the vertices of the following triangle:

\ifx\JPicScale\undefined\def\JPicScale{1.3}\fi
\unitlength \JPicScale mm
\begin{picture}(100,40)(40,23)
\linethickness{0.3mm}
\multiput(70,30)(0.12,0.2){125}{\line(0,1){0.2}}
\linethickness{0.3mm}
\multiput(85,55)(0.12,-0.2){125}{\line(0,-1){0.2}}
\linethickness{0.3mm}
\put(70,30){\line(1,0){30}}
\put(85,55){\makebox(0,0)[cc]{$\bullet$}}

\put(70,30){\makebox(0,0)[cc]{$\bullet$}}

\put(100,30){\makebox(0,0)[cc]{$\bullet$}}

\put(67,27){\makebox(0,0)[cc]{$\text{PT}_{X/C}$}}

\put(105,27){\makebox(0,0)[cc]{$\text{DT}_{X/C}$}}

\put(85,60){\makebox(0,0)[cc]{Modular forms}}

\put(73,32){\makebox(0,0)[cc]{$1$}}

\put(85,52){\makebox(0,0)[cc]{$2$}}

\put(97,32){\makebox(0,0)[cc]{$3$}}

\end{picture}
Here by $\text{PT}_{X/ C}$ and $\text{DT}_{X/ C}$ we mean the fiberwise stable pair and Donald\-son-Thomas theories of the Calabi-Yau $K3$ fibration $X\to C$ discussed above. As mentioned earlier, the connection $1\leftrightarrow 3$ is motivated by $T$-duality and we provide a complete answer to it in 
Section~\ref{sec:wall}. The connection $3 \leftrightarrow 2$
 is motivated by $S$-duality, 
and the first two authors have provided partial results on it in \cite{G-S}. In Section~\ref{sec:PTNL} and Section~\ref{sec:nodalK3}
of this paper, we provide partial 
results on the connection between $1 \leftrightarrow 2$. More recently, Pandharipande-Thomas \cite{KKVPT} studied the connection $1\leftrightarrow 2$ in 
more generality by a different method (degeneration method) and for a different purpose. A refined (motivic) version of connection $1 \leftrightarrow 2$ is conjectured in \cite{KKP}. We hope the combination of the established bridges 
$1\leftrightarrow 2$ and $1\leftrightarrow 3$, sheds some lights on the connection $3\leftrightarrow 2$
 in full generality, which we hope we can work out in the future. 

\subsection{Acknowledgment}
We would like to thank Jan Manschot and Richard Thomas for helpful discussions. The first author was partially supported by NSF grant DMS-1406788.
The second author would like to thank MIT and the Institute for the Physics and Mathematics of the Universe (IPMU) for hospitality and providing the opportunity to  discuss about this project during his visits. Special thanks to Max Planck Institut f\"{u}r Mathematik for hospitality during the second author's stay in Bonn.  
The second and third authors are supported by World Premier 
International Research Center Initiative
(WPI initiative), MEXT, Japan. The third author
 is also supported by Grant-in Aid
for Scientific Research grant (No.~26287002)
from the Ministry of Education, Culture,
Sports, Science and Technology, Japan.

\subsection{Notation and convention}
In this paper, all the 
varieties or schemes are
defined over $\mathbb{C}$. 
For a morphism 
$\pi \colon X \to Y$
of schemes 
and coherent sheaves $E$, $F$ on $X$, 
we 
write the sheaf 
$\pi_{\ast} \hH om_{X}(E, F)$ as 
$\hH om_{\pi}(E, F)$. 
Its right derived functor is denoted by 
$\dR \hH om_{\pi}(E, F)$, which 
coincides with $\dR \pi_{\ast} \dR \hH om(E, F)$. 
If $\pi$ is smooth, we denote by 
$\omega_{\pi}$ the relative canonical 
line bundle on $X$. 
For a scheme $X$, we denote
by $\mathbb{L}_{X}^{\bullet}$ the cotangent complex of $X$. 

\newpage
\section{Stable pairs on smooth $K3$ fibrations}
 \label{sec:PTNL}
\subsection{Stable pairs on $K3$ fibrations} \label{sec:smooth}
Let $X$ be a smooth projective threefold over $\mathbb{C}$. 
By definition, a \textit{K3 fibration} is a 
morphism
\begin{align*}
\pi \colon X \to C
\end{align*}
onto a smooth projective curve $C$ whose generic 
fiber is a smooth $K3$ surface. 
If $\pi$ is a smooth morphism, then it is called a 
\textit{smooth K3 fibration}. 
We set
\begin{align*}
H_2(X, \mathbb{Z})^{\pi} \cneq 
\{ \beta \in H_2(X, \mathbb{Z}) : \pi_{\ast}\beta=0\}. 
\end{align*}
We always take curve classes contained in the above 
subgroup of $H_2(X, \mathbb{Z})$. 
An element $\beta \in H_2(X, \mathbb{Z})$ 
is called \textit{irreducible} if it is not written 
as a sum $\beta_1+\beta_2$ for non-zero 
effective curve classes $\beta_1, \beta_2$. 

For $\beta \in H_2(X, \mathbb{Z})^{\pi}$, 
we study the moduli space of stable pairs $\pP_{n}(X, \beta)$ in the sense of \cite{a17}. It parametrizes the pairs 
\begin{equation}\label{sec1:pair}
s \colon 
\O_{X} \to F
\end{equation}
 where $F$ is a pure 1-dimensional sheaf on $X$ with 
$[F]=\beta$ and $\chi(F)=n$, and the cokernel of $s$ is 0-dimensional. 
Here $[F]$ is the fundamental homology class determined by 
$F$, which is the Poincare dual of $\ch_2(F)$. 
For simplicity we set 
 $\pP=\pP_n(X,\beta)$, and denote by 
\begin{equation}\label{universal}
\Ibb =(\oO_{X \times \pP} \to \mathbb{F})
\end{equation}
the universal 
pairs, which we interpret
as an object in $D^b(X \times \pP)$. 
Note that we have the exact 
triangle on $X \times \pP$
\begin{align}\label{IOF}
\Ibb \to \oO_{X \times \pP} \to \mathbb{F} \to \Ibb[1]. 
\end{align}
Let 
\begin{align*}
\pi_{\pP}:X\times \pP\rightarrow \pP, \ \pi_{X}:X\times \pP\rightarrow X
\end{align*}
be the natural projections. 
In \cite{a17}, $\pP_n(X, \beta)$ is equipped with a perfect obstruction theory by studying the deformations of the two term 
complex (\ref{sec1:pair}) in the derived category:
\begin{thm}\label{PT}
\cite{a17}
There is a perfect obstruction theory over $\pP$ given by the following morphism in the derived category:
\begin{equation*}
 \eE^{\bullet}:=\dR\hH om_{\pi_{\pP}}(\Ibb,\Ibb \otimes 
\omega_{\pi_{\pP}})_{0}[2]\rightarrow\mathbb{L}^{\bullet}_{\pP}. 
\end{equation*} 
Here $(-)_{0}$ means taking the traceless part. 
\end{thm}
Since $\omega_X \cdot \beta=0$
for $\beta \in H_2(X, \mathbb{Z})^{\pi}$, the Riemann-Roch 
calculation easily implies that the virtual 
dimension of $\pP$ is zero. 
The stable pair invariants are then defined by 
\begin{align*}
P_{n, \beta} \cneq  
\int_{[\pP_n(X, \beta)]^{\rm{\rm{vir}}}} 1. 
\end{align*}
We define the generating series $\PT(X)_\beta$, $\PT(X)$ to be
\begin{align}\label{gen:PT}
\PT(X)_\beta \cneq 
\sum_{n \in \mathbb{Z}} P_{n, \beta}q^n, \ 
\PT(X) \cneq \sum_{\beta \in H_2(X, \mathbb{Z})^{\pi}}\PT(X)_{\beta}t^{\beta}. 
\end{align}

\subsection{Stable pairs on $K3$ surfaces}
Let $S$ be a smooth projective $K3$ surface over $\mathbb{C}$. 
For $\gamma \in H_2(S, \mathbb{Z})$, 
we can similarly define the moduli space of stable pairs 
$\pP_n(S, \gamma)$ on $S$. 
We review several 
results on the moduli space $\pP_n(S, \gamma)$.  
In \cite{a44}, Pandharipande and Thomas identify 
$\pP_{n}(S,\gamma)$ by a relative  Hilbert scheme of points:
\begin{prop}\label{pushforward}
\cite[Proposition B.8]{a44} The moduli space of stable pairs
$\pP_n(S, \gamma)$ on $S$  
is isomorphic to the relative Hilbert scheme $\Hilb^{n+\gamma^2/2}(\cC/\M)$ where $\M$ is the moduli space of pure one dimensional subschemes of $S$ in class $\gamma$ and $\cC$ denotes the universal curve over $S\times \M$.\qed
\end{prop}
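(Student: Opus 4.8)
The plan is to construct mutually inverse morphisms between $\pP_n(S,\beta)$ and $\Hilb^{n+\beta^2/2}(\cC/\M)$ by dualizing the section of a stable pair along its scheme-theoretic support, and then to promote the resulting bijection on points to an isomorphism of moduli functors. First I would treat a single stable pair $\O_S \xrightarrow{s} \F$. Since $\F$ is pure of dimension one on the \emph{smooth} surface $S$, the Auslander--Buchsbaum formula forces its scheme-theoretic support $C=\supp(\F)$ to be an effective Cartier divisor, hence a Gorenstein curve in class $\beta$ whose dualizing sheaf $\omega_C=(\omega_S\otimes \O_S(C))|_C$ is a line bundle; this $C$ defines a point of $\M$. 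Purity makes $\F$ a rank-one torsion-free $\O_C$-module and the section factors as an inclusion $\O_C\hookrightarrow \F$ with zero-dimensional cokernel $Q$. Applying the duality $(-)^D:=\Home_{\O_C}(-,\omega_C)\cong \Extt^1_{\O_S}(-,\omega_S)$ to $0\to \O_C\to \F\to Q\to 0$ and using that $Q$ is zero-dimensional gives $0\to \F^D\to \omega_C\to Q'\to 0$ with $\length(Q')=\length(Q)$, so that $\F^D\otimes\omega_C^{-1}$ is the ideal $I_Z\subset \O_C$ of a zero-dimensional subscheme $Z\subset C$, i.e.\ a point of the fiber of $\Hilb(\cC/\M)$ over $[C]$. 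Riemann--Roch pins down the length: $\chi(\F)=\chi(\O_C)+\length(Q)=-\beta^2/2+\length(Z)$, whence $\length(Z)=n+\beta^2/2$. The inverse assignment sends $(C,Z)$ to $\F:=\Home_{\O_C}(I_Z,\O_C)$ with $s$ the dual of $I_Z\hookrightarrow \O_C$; reflexivity of rank-one sheaves on the Gorenstein curve $C$ guarantees that $\F$ is pure with $\Cok(s)$ zero-dimensional, so $(\F,s)$ is again a stable pair.

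Next I would relativize this over an arbitrary test scheme $T$. A flat family of stable pairs restricts to a flat family of support divisors $\cC_T\subset S\times T$ of constant Hilbert polynomial, which classifies a morphism $T\to \M$. Relative Grothendieck duality for $\cC_T/T$, whose fibers are Gorenstein of pure dimension one so that the relative dualizing sheaf is a line bundle and the duality commutes with base change, produces a flat family of ideals $I_{Z_T}\subset \O_{\cC_T}$ and hence a $T$-point of $\Hilb^{n+\beta^2/2}(\cC/\M)$. Running the same construction over the universal subscheme of the relative Hilbert scheme yields the inverse natural transformation, and Yoneda then upgrades the pointwise bijection to the claimed isomorphism of schemes.

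The main obstacle is precisely this passage to families: one must check that the scheme-theoretic supports of a flat family of pure one-dimensional sheaves of fixed class form a flat family of Cartier divisors, so that the classifying map to $\M$ is well defined and the relative duality applies fiberwise, and that the formation of $(-)^D$ and of the ideal $I_Z$ commutes with arbitrary base change. Once base-change compatibility of relative duality for the Gorenstein family $\cC/\M$ is granted, the remaining verifications---purity of $\F$, zero-dimensionality of $\Cok(s)$, and the numerical identity $\length(Z)=n+\beta^2/2$---are local and follow from the smoothness of $S$ together with the local complete intersection structure of the fibers of $\cC\to\M$.
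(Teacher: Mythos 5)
The paper offers no argument of its own for this proposition---it is quoted with a \verb|\qed| directly from Pandharipande--Thomas \cite[Proposition B.8]{a44}---and your proposal is essentially a correct reconstruction of the argument given in that cited appendix: the scheme-theoretic support of a pure one-dimensional sheaf on a smooth surface is an effective Cartier (hence Gorenstein) divisor, duality $\Home_{\O_C}(-,\omega_C)\cong\Extt^1_{\O_S}(-,\omega_S)$ along the support curve exchanges stable pairs $\O_C\hookrightarrow\F$ with ideals $I_Z\subset\O_C$ of zero-dimensional subschemes, and Riemann--Roch on the $K3$ fiber fixes $\length(Z)=n+\beta^2/2$. The family-level issues you flag (flatness of the family of support divisors and base-change compatibility of the relative duality) are precisely the technical content carried out in the cited reference, so your outline matches the intended proof.
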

We next consider the case 
that $\gamma$ is irreducible. 
In this case, the following result is 
proved in~\cite{MPT}:
\begin{prop}\label{prop:smooth}
\cite[Proposition~5]{MPT}
If $\gamma$ is irreducible, 
the moduli space $\pP_n(S, \gamma)$ is non-singular of dimension 
$n+ \gamma^2 +1$. 
It depends only upon 
$(n, \gamma^2)$ up to deformation equivalence. 
\end{prop} 
By the above proposition, 
we may write 
\begin{align}\label{P:k3}
\pP_n(S,h) \cneq \pP_n(S, \gamma)
\end{align}
for an irreducible curve class $\gamma \in H_2(S, \mathbb{Z})$
with $\gamma^2=2h-2$.  We sometimes write $\pP_n(\kt,h)$ if we do not want to specify a particular $K3$ surface with this property.
The generating series of 
$\chi(\pP_n(\kt,h))$ is computed by
Kawai-Yoshioka's formula (\ref{equ:KW}).

\subsection{Stable pairs with irreducible curve classes}
In this subsection, we assume that 
$\beta \in H_2(X, \mathbb{Z})^{\pi}$ is an irreducible 
curve class. 
Let $\pP=\pP_n(X, \beta)$ be the moduli space of stable pairs, 
and consider the perfect obstruction theory 
$\eE^{\bullet} \to \mathbb{L}^{\bullet}_{\pP}$ in 
Theorem~\ref{PT}. 
We have the following proposition: 
\begin{prop}\label{loc-free} 
In the above situation, we 
have the canonical isomorphism
\begin{align*}
\hH^{1}(\eE^{\bullet\vee})
\stackrel{\cong}{\to}
\hH om_{\pi_{\pP}}(\Ibb, \mathbb{F}\otimes\omega_{\pi_{\pP}})^{\vee}.
\end{align*}
\end{prop}
\begin{proof}
We write $\omega=\omega_{\pi_{\pP}}$ for simplicity. 
Applying 
$\dR \hH om_{\pi_{\pP}}(\Ibb,- \otimes \omega)$
 to the exact triangle 
(\ref{IOF}), 
we obtain the following exact triangle on $\pP$:
\begin{align*}
\dR \hH om_{\pi_{\pP}}
(\Ibb, \mathbb{F}\otimes \omega)\rightarrow 
\dR \hH om_{\pi_{\pP}}(\Ibb, \Ibb\otimes \omega)[1]
 \rightarrow 
\dR \hH om_{\pi_{\pP}}(\Ibb, \omega)[1].
\end{align*}
By the above triangle
and the natural morphism
\begin{align*}
\dR \pi_{\pP \ast} \omega \to 
\dR \hH om_{\pi_{\pP}}(\Ibb,\Ibb\otimes \omega)
\end{align*} 
we can form the following commutative diagram of vertical and horizontal exact triangles:
\begin{align}
\label{cone-off} 
\xymatrix@C=.3em{
   &\dR\pi_{\pP*}\omega[1]\ar[r]^{=}\ar[d]&\dR\pi_{\pP*}\omega[1]\ar[d]\\
    \dR\hH om_{\pi_{\pP}}(\Ibb,\mathbb{F}\otimes \omega)\ar[r]\ar[d]^{id} & 
\dR\hH om_{\pi_{\pP}}(\Ibb,\Ibb\otimes \omega)[1]\ar[r]\ar[d]& 
\dR\hH om_{\pi_{\pP}}(\Ibb,\omega)[1]\ar[d]\\
\dR\hH om_{\pi_{\pP}}(\Ibb,\mathbb{F}\otimes \omega)\ar[r]&
\dR\hH om_{\pi_{\pP}}(\Ibb,\Ibb \otimes\omega)_{0}[1]\ar[r]^{}  & 
\dR\hH om_{\pi_{\pP}}(\mathbb{F},\omega)[2]}
\end{align}
After dualizing, the bottom row of the diagram gives the exact triangle
 \begin{align*} 
\dR\hH om_{\pi_{\pP}}(\O,\mathbb{F})[1]\rightarrow 
\dR\hH om_{\pi_{\pP}}(\Ibb,\Ibb)_{0}[2]\rightarrow
\dR\hH om_{\pi_{\pP}}(\mathbb{F},\Ibb)[3].
\end{align*}
The 0-th cohomology of the sequence above gives the exact sequence
\begin{equation*}
\Extt^{2}_{\pi_{\pP}}(\Ibb,\Ibb)_{0}\xrightarrow{g} \Extt^{3}_{\pi_{\pP}}(\mathbb{F},\Ibb)\rightarrow \Extt^{2}_{\pi_{\pP}}(\O,\mathbb{F})
\end{equation*}
Note that the first term in the sequence above is 
$\hH^{1}(\eE^{\bullet\vee})$. Since 
the fibers of 
$\pi_{\pP}|_{\Supp(\mathbb{F})}$
are one dimensional, 
we obtain 
$\Extt^{2}_{\pi_{\pP}}(\O,\mathbb{F})= 0$ and hence the map $g$ is surjective. Now we show that the map $g$ is also injective. It is enough to prove this over any closed point $p\in \pP$. Here we will use part of the strategy of the proof of \cite[Proposition 4.4]{a17}. Over the point $p$ and after dualizing and taking cohomology, the diagram (\ref{cone-off}) gives the following diagram of the vertical and horizontal exact sequences 
\begin{equation*}
\xymatrix{&&0\ar[d]&&\\
\Ext^{2}(F, \Ib)\ar[r]\ar[d]^{=}&H^{1}(F)\ar[r]\ar[d]&\Ext^{2}(\Ib,\Ib)_{0}\ar[d]\ar@{-->}[dr]^{g_{p}}&&\\
\Ext^{2}(F,\Ib)\ar[r]&\Ext^{2}(\O_{X},\Ib)\ar[r]\ar[d]&\Ext^{2}(\Ib,\Ib)\ar[r]\ar[d]^{tr}&\Ext^{3}(F,\Ib)&\\
&H^{2}(\O_{X})\ar[r]^{=}\ar[d]&H^{2}(\O_{X})\ar[d]&&\\
&0&0&&}
\end{equation*}
where the map $g_{p}$ is induced by the map $g$ described above. 
As in \cite[Proposition 4.4]{a17}, in order to show that $\Ext^{2}(\Ib,\Ib)_{0}\xrightarrow{g_{p}}\Ext^{3}(F,\Ib)$ is injective, it is enough to show that the 
composition
\begin{align}\label{ext:compose}
\Ext^{1}(F,F) \to \Ext^2(F, \Ib) 
\to H^{1}(F)
\end{align}
is surjective. 
Here 
the first map is induced by the 
exact triangle $I^{\bullet} \to \oO_X \to F$
for the stable pair $I^{\bullet}=(\oO_X \to F)$. 


Suppose that $F$ is supported on a curve $D \subset X$
 which is irreducible by our assumption. 
We claim the inclusion and the isomorphism
\begin{align}\label{inc:iso}
\Ext^1(F,F) \supseteq  H^1(\hH om(F,F)) \cong H^1(\O_D).
\end{align}
The first inclusion follows from the local to global 
spectral sequence. 
As for the second 
isomorphism, 
let us consider the trace map 
$\hH om(F,F)\xrightarrow{tr}\O_{D}$. 
The above map is obviously surjective, and 
the stability of pairs 
implies that 
its kernel is at most zero dimensional. 
Therefore we have 
the second isomorphism in (\ref{inc:iso}). 
On the other hand, using the fact that the cokernel of the 
map $\O_D\xrightarrow{s} F$
defining the stable pair is zero dimensional, 
we can see that $H^1(\O_D) \xrightarrow{s} H^1(F)$ is surjective. 
Hence combined with (\ref{inc:iso}), 
we conclude that the map (\ref{ext:compose}) 
is also surjective. 

Now we have proved that the map $g$ is an isomorphism: 
\begin{align*}
g \colon 
\hH^{1}(\eE^{\bullet\vee}) \stackrel{\cong}{\to} 
\Extt^{3}_{\pi_{\pP}}(\mathbb{F},\Ibb). 
\end{align*}
The desired result follows from
the above isomorphism and the 
relative duality
$\Extt^{3}_{\pi_{\pP}}(\mathbb{F},\Ibb)
\cong \hH om_{\pi_{\pP}}(\Ibb,\mathbb{F}\otimes \omega_{\pi_{\pP}})^{\vee}$. 
\end{proof}

We next distinguish two kinds of components of $\pP$. 
We use the following lemma: 
\begin{lem}\label{lem:point}
Suppose that $\beta$ is irreducible. 
Then for any $(\oO_X \to F) \in \pP_n(X, \beta)$, there 
exists a unique point $p\in C$ such that 
$F$ is scheme theoretically supported on $\pi^{-1}(p)$. 
\end{lem}
\begin{proof}
Since $\beta$ is irreducible, the 
sheaf $F$ is set theoretically supported on $\pi^{-1}(p)$ for some 
$p\in C$. Let $m_p \subset \oO_{C, p}$ be the maximal ideal, and 
take $s \in m_p$. Then the sheaf homomorphism
$\cdot s \colon F \to F$ must 
be a zero map as $\beta$ is irreducible and $F$ is pure. 
Hence $F$ is an $\oO_{\pi^{-1}(p)}$-module. 
\end{proof}
The 
map 
sending 
$(\oO_X \to F)$
to $p\in C$ in Lemma~\ref{lem:point} 
defines the morphism
\begin{align*}
\rho \colon \pP \to C. 
\end{align*}
\begin{defi} \label{defn:comps}
 We call a connected component $\pP_c$ of $\pP=\pP_{n}(X,\beta)$ a 
type I component
if $\rho(\pP_c)=C$. Otherwise we call $\pP_c$ a type II component. 
\end{defi}
Below we denote by 
$\pP_{\rm{I}} \subset \pP$ the union of 
type I connected components, and 
$\pP_{{\rm{II}}} \subset \pP$
the union of type II connected components. 
We set
\begin{align*}
i_{\sS}: \sS \cneq 
X \times_{C} \pP\hookrightarrow X \times \pP. 
\end{align*}
We have the Cartesian square:
\begin{align}\label{Car}
\xymatrix{
\sS \ar[r]^{i_{\sS}} \ar[d]_{\rho_{\sS}} \ar@{}[dr]|\square
& X \times \pP \ar[d]^{(\pi, \rho)}\\
C \ar[r] & C \times C. 
}
\end{align}
Here the bottom arrow is the diagonal. 
The universal one dimensional sheaves $\mathbb{F}$ in (\ref{universal}) 
is written as $i_{\sS \ast}\mathbb{G}$ for 
a sheaf $\mathbb{G}$ on $\sS$. 
We denote by 
$\Ibb_{\sS}$ the universal pairs on $\sS$:
\begin{align*}
\Ibb_{\sS} \cneq (\oO_{\sS} \to \mathbb{G}). 
\end{align*}
Also we denote by $\pi'_{\pP}$ and $\pi'_X$ the compositions of $i_{\sS}$ with $\pi_{\pP}$ and $\pi_X$ respectively, i.e. they are projections:
\begin{align*}
\pi_{\pP}' \colon \sS \to \pP, \ \pi_{X}' \colon \sS \to X. 
\end{align*}
By \cite[Proposition 3.4]{a112} and the standard techniques of \cite{a10},
the 
type I (resp.~II)
components can be equipped with a 
relative (absolute) obstruction theory: 
\begin{thm}\label{thm:trunc2}
(i) 
There exists a $\rho$-relative perfect 
obstruction theory over $\pP_{\rm{I}}$ given 
by 
\begin{align*}
\gG^{\bullet}_{\rm{I}}
\cneq \dR \hH om_{\pi_{\pP}'}(\mathbb{G},\Ibb_{\sS}
\otimes\omega_{\pi'_{\pP}} )|_{\pP_{\rm{I}}}[2]
\rightarrow\mathbb{L}^{\bullet}_{\pP_{\rm{I}}/C}.
\end{align*} 
(ii) There exists a perfect obstruction theory over $\pP_{\rm{II}}$
given by 
\begin{align*}
\gG^{\bullet}_{\rm{II}}
\cneq \dR \hH om_{\pi_{\pP}'}(\mathbb{G},\Ibb_{\sS}
\otimes\omega_{\pi'_{\pP}} )|_{\pP_{\rm{II}}}[2]
\rightarrow\mathbb{L}^{\bullet}_{\pP_{\rm{II}}}.
\end{align*}
\end{thm}

\subsection{Type I component}
Now we assume that $\pi \colon X \to C$ is a smooth $K3$ fibration and 
$\beta$ an irreducible curve class. 
Here we investigate the contributions
of the type I components to the virtual classes. 
We have the following statement: 
\begin{prop}\label{smoothness} \cite[Proposition C.2 and Lemma C.7]{a44}
The type I components $\pP_{\rm{I}}$ 
is non-singular of dimension $n+2g$, where 
$g$ is the arithmetic genus of the support of a stable pair in $\pP_{\rm{I}}$. 
Moreover, we have the isomorphism of the tangent bundles:
\begin{align*}
\tT_{\pP_{\rm{I}}} \cong \hH om_{\pi_{\pP}}(\Ibb,\mathbb{F})|_{\pP_{\rm{I}}}, \quad
\tT_{\pP_{\rm{I}}/C}\cong \hH om_{\pi'_{\pP}}(\Ibb_{\sS},\GG)|_{\pP_{\rm{I}}}. 
\end{align*}
\end{prop}
Let
$\kK$ be the line bundle on $C$ given by
\begin{align*}
\kK \cneq \pi_{\ast} \omega_{X/C}. 
\end{align*}
\begin{prop}\label{prop:SES}
We have the exact sequence of vector bundles on 
$\pP_{\rm{I}}$:
\begin{align*}
0 \to \rho^{\ast} \kK^{\vee} \to 
&\hH om_{\pi_{{\pP}}}(\Ibb,\mathbb{F}\otimes\omega_{\pi_{{\pP}}})^{\vee}
|_{\pP_{\rm{I}}} \\
& \to\hH om_{\pi'_{{\pP}}}(\Ibb_{\sS},
\GG\otimes\omega_{\pi'_{{\pP}}})^{\vee}|_{\pP_{\rm{I}}}
\otimes \rho^{\ast}T_C \to 0.
\end{align*}
\end{prop}
\begin{proof}
Since $\mathbb{F}=i_{\sS \ast} \mathbb{G}$, 
we have the
exact triangle
\begin{align}\label{tri:GI}
\GG(-\sS)\rightarrow \dL i_{\sS}^{*}\Ibb\rightarrow \Ibb_{\sS}.
\end{align}
We apply 
$\dR \hH om_{\pi'_{\pP}}(-, \GG \otimes i_{\sS}^{\ast}
\omega_{\pi_{\pP}})$
 to the above exact triangle. 
By taking the cohomologies
and restricting to $\pP_{\rm{I}}$, we get the exact sequence
\begin{align*}
0
\rightarrow \hH om_{\pi'_{\pP}}(\Ibb_{\sS},\GG \otimes
i_{\sS}^{\ast}\omega_{\pi_{\pP}})|_{\pP_{\rm{I}}}
&\rightarrow \hH om_{\pi_{\pP}}(\Ibb, \mathbb{F} \otimes 
\omega_{\pi_{\pP}})|_{\pP_{\rm{I}}} \\
&\rightarrow \hH om_{\pi'_{\pP}}(\GG(-\sS),\GG\otimes i_{\sS}^{\ast}\omega_{\pi_{\pP}})|_{\pP_{\rm{I}}}. \end{align*}
By Proposition~\ref{smoothness}, 
all 
the sheaves in the 
above sequences are vector bundles. 
We investigate the fibers of the 
vector bundles in the sequence above. 
By the fiberwise stability of $\GG$, 
the bundle
$\hH om_{\pi'_{\pP}}(\GG(-\sS),\GG\otimes i_{\sS}^*\omega_{\pi_{\pP}})
|_{\pP_{\rm{I}}}$
is a line bundle on $\pP_{\rm{I}}$. 
Therefore by 
Proposition~\ref{smoothness} and 
the definition of type I component, 
the above sequence 
must be a short exact sequence. 
After dualizing, we obtain the short exact sequence 
\begin{align*}
0\to \Extt^{2}_{\pi'_{\pP}}(\GG,\GG)|_{\pP_{\rm{I}}}
&\to 
\hH om_{\pi_{\pP}}(\Ibb,\mathbb{F}\otimes\omega_{\pi_{\pP}})^{\vee}|_{\pP_{\rm{I}}} \\
&\to\hH om_{\pi'_{\pP}}(\Ibb_{\sS},\GG\otimes
i_{\sS}^{\ast}\omega_{\pi_{\pP}})^{\vee}|_{\pP_{\rm{I}}}\to 0.
\end{align*}
Here the first term is obtained by applying 
the Grothendieck duality and the adjunction formula $\omega_{\pi'_{\pP}}\cong i_{\sS}^{*}\omega_{\pi_{\pP}}\otimes \mathcal{O}_{\sS}(\sS)$.
Now we have $$\Extt^{2}_{\pi'_{\pP}}(\GG,\GG)|_{\pP_{\rm{I}}} \cong
R^2 \hH om_{\pi'_{\pP}}(\GG,\GG)|_{\pP_{\rm{I}}}$$ which is identified with $R^{2}{\pi'_{\pP \ast}}\mathcal{O}_{\sS}|_{\pP_{\rm{I}}}$
 via the trace map using the fiberwise stability of $\GG$ again. By the Grothendieck duality and the adjunction formula again and noting that $\omega_{\pi'_{\pP}}=\pi_X^{'\ast} \omega_{X/C}$, 
we obtain 
$$R^{2}{\pi'_{\pP \ast}}\mathcal{O}_{\sS}|_{\pP_{\rm{I}}}
\cong (R^{0}{\pi'_{\pP \ast}}
\omega_{\pi'_{\pP}})^{\vee}|_{\pP_{\rm{I}}}
\cong \rho^* \mathcal{K}^\vee.$$
Finally, again using the 
adjunction 
and noting 
$\oO_{\sS}(\sS) \cong \rho^{\ast}T_C$
by the diagram (\ref{Car}), we have the isomorphism
\begin{align*}
\hH om_{\pi'_{\pP}}(\Ibb_{\sS},\GG \otimes
i_{\sS}^{\ast}\omega_{\pi_{\pP}})^{\vee}|_{\pP_{\rm{I}}}
\cong 
\hH om_{\pi'_{\pP}}(\Ibb_{\sS}, \GG\otimes\omega_{\pi'_{\pP}})^{\vee}
|_{\pP_{\rm{I}}}
\otimes \rho^{\ast}T_C. 
\end{align*}
Therefore we obtain the desired exact sequence. 
\end{proof}
Using the above proposition, we have the following statement: 
\begin{prop}\label{prop:typeI}
The restriction
 of the virtual 
class of the obstruction theory $\eE^\bullet$ in Theorem \ref{PT} to 
the 
type I
components $\pP_{\rm{I}}$ 
is given by  
\begin{align*}
[\pP_{\rm{I}}, \eE^{\bullet}]^{\rm{vir}}
=[\pP_{\rm{I}}]\cap \big(c_{1}(\rho^{*}\mathcal{K}^{\vee})\cup c_{top}(\Omega_{\pP_{\rm{I}}/C})\big). 
\end{align*}
\end{prop}
\begin{proof}
By Propositions \ref{loc-free} and \ref{smoothness}, $\pP_{\rm{I}}$ is smooth with the obstruction sheaf $\hH om_{\pi_{\pP}}(\mathbb{I}^{\bullet}, \mathbb{F} \otimes 
\omega_{\pi_{\pP}})^{\vee}|_{\pP_{\rm{I}}}$. Using the short exact sequence 
in Proposition~\ref{prop:SES}, we can write 
\begin{align} \label{equ:ctop}
&c_{top}(\hH om_{\pi_{\pP}}(\mathbb{I}^{\bullet}, \mathbb{F} \otimes 
\omega_{\pi_{\pP}})^{\vee}|_{\pP_{\rm{I}}}) \\
\notag
&=c_1(\rho^{\ast} \kK^{\vee}) \cup 
c_{top}(
\hH om_{\pi'_{\pP}}(\Ibb_{\sS},\GG\otimes\omega_{\pi'_{\pP}})^{\vee}
|_{\pP_{\rm{I}}}
\otimes \rho^{\ast}T_C). 
\end{align}
By the following lemma and using the fact that the intersection product of any 
two classes on $A^1_\Q(C)$ is zero, we can see that only 
$$c_1(\rho^{\ast} \kK^{\vee}) \cup c_{top}
(\hH om_{\pi'_{\pP}}(\Ibb_{\sS},\GG)^{\vee}|_{\pP_{\rm{I}}})=
c_1(\rho^{\ast} \kK^{\vee}) \cup 
c_{top}(\Omega_{\pP_{\rm{I}}/C})$$ contributes to the formula \eqref{equ:ctop}.
Here the above identity is due to Proposition~\ref{smoothness}. 
 Therefore, 
the desired equality follows from \cite[Proposition 5.6]{BF}.  
\end{proof}

\begin{lem}\label{ctop}
Suppose that $\pP_0 \subset \pP_{\rm{I}}$ is a type I component of dimension $l$. Then, we have the following relation in $A^{*}_\Q(\pP_0)$:
\begin{equation*}
c_{top}(\mathcal{H}om_{\pi_{\pP_{0}}}(\Ibb_\sS,\GG\otimes\omega_{\pi'_{\pP_0}})^{\vee})= c_{l}(\mathcal{H}om_{\pi'_{\pP_0}}(\Ibb_\sS,\GG)^{\vee})+A \cdot \rho^*(B)
\end{equation*}
for some $A \in A_\Q^{l-1}(\pP_0)$ and $B \in A_\Q^1(C)$.
\end{lem}
\begin{proof}
Since $\omega_{X/C}$ is fiberwise trivial, it must be the pullback of a line bundle $M$ on $C$. So we can write $\omega_{\pi'_{\pP_0}}=\pi'^*_X\omega_{X/C}=\pi^*_{\pP_0}\circ \rho^*M$.
The Grothendieck-Riemann-Roch formula gives: 
\begin{align*}
&\operatorname{ch}\bigg(\sum_{j=0}^1(-1)^j\Extt^j_{\pi'_{\pP_0}}(\Ibb_{\sS},\GG\otimes\omega_{\pi'_{\pP_0}})^{\vee}\bigg)\\&=\pi'_{\pP*}\bigg(\operatorname{ch}(\Ibb_\sS) \cdot \operatorname{ch}(\mathbb{G})^\vee \cdot\operatorname{ch}(\omega_{\pi'_{\pP_0}})^\vee\cdot \operatorname{td}(X {_{\pi}}\times_\rho \pP_0) \bigg)\\&=(1-\rho^*c_1(M)) \cdot \operatorname{ch}\bigg(\sum_{j=0}^1(-1)^j\Extt^j_{\pi'_{\pP_0}}(\Ibb_\sS,\mathbb{G}))^\vee\bigg).
\end{align*} 
By (C.6) in the proof of \cite[Proposition C.2]{a44}, the fibers of 
the sheaves $$\Extt^1_{\pi'_{\pP_0}}(\Ibb_{\sS},\GG\otimes\omega_{\pi'_{\pP_0}})^{\vee} \quad \text{and} \quad \Extt^1_{\pi'_{\pP_0}}(\Ibb_{\sS},\GG)^{\vee}$$ at any closed point $I^\bullet_S=(\O_S\to G) \in \pP_0$ is naturally identified with $$\Ext^1(I^\bullet_S,G)\cong H^2(\O_S)$$ for a $K3$ fiber $S\subset X$. Therefore, both sheaves are line bundles pulled back from the base curve $C$,  thus their
 Chern characters are the pull backs of the classes from $A_\Q^*(C)$. The lemma is then proven by an inductive argument on $n$.
\end{proof}

\subsection{Type II component}
Let 
$\pP_c \subset \pP_{\rm{II}}$ be
 a type II component. 
Then there exists a point $p\in C$ such that $\rho(\pP_c)=p$. 
Let us set $S=\pi^{-1}(p)$ and $i \colon S \hookrightarrow X$
is the inclusion. 
Then there exists unique $\gamma \in H_2(S, \mathbb{Z})$ 
with $i_{\ast}\gamma=\beta$ and a 
closed embedding
\begin{equation}\label{equn}
\pP_n(S, \gamma) \subset \pP_c. 
\end{equation}
\begin{defi}
A type II component $\pP_c$ is called isolated if the 
embedding (\ref{equn}) is an isomorphism. 
\end{defi}

\begin{rmk}\label{rmk:tan}
Since $\pP_n(S, \gamma)$ is non-singular by Proposition~\ref{prop:smooth}, 
the embedding (\ref{equn}) is an isomorphism if and only if 
(\ref{equn}) induces isomorphisms of
tangent spaces at every points in $\pP_n(S, \gamma)$.  
\end{rmk}

%
%
%

We have the following lemma: 
\begin{lem}\label{lem:iso}
Suppose that any component of $\pP_{\rm{II}}$ is isolated. 
Then we have the isomorphism of vector bundles:
\begin{align*}
\Omega_{\rm{\pP_{\rm{II}}}}
\stackrel{\cong}{\to}
\hH om_{\pi_{\pP}}(\mathbb{I}, \mathbb{F} \otimes \omega_{\pi_{\pP}})^{\vee}
|_{\pP_{\rm{II}}}. 
\end{align*}
\end{lem}
\begin{proof}
By the definition of type II components and 
Remark~\ref{rmk:tan}, the following 
morphism is an isomorphism
\begin{align*}
T_{\pP_{\rm{II}}} \cong \hH om_{\pi_{\pP}'}(\mathbb{I}_{\sS},
 \mathbb{G})|_{\pP_{\rm{II}}}
\stackrel{\cong}{\to}
\hH om_{\pi_{\pP}}(\mathbb{I}, \mathbb{F})|_{\pP_{\rm{II}}}. 
\end{align*}
Here the first isomorphism is due to Theorem~\ref{thm:trunc2} (ii). Since $\omega_X$ restricted to any $K3$ fiber is trivial, and for any component $\pP_c\subset \pP_{\rm{II}}$, $\mathbb{F}|_{\pP_c\times X}$ is supported on $\pP_c\times S$ for some $K3$ fiber $S\subset X$, 
we have the isomorphism $$\mathbb{F}\otimes \omega_{\pi_{\pP}}|_{\pP_{\rm{II}}\times X}\cong \mathbb{F}|_{\pP_{\rm{II}}\times X}.$$
Therefore we obtain the lemma.  
\end{proof}
By the above lemma and Proposition~\ref{loc-free}, 
the contributions
to virtual classes from the isolated 
type II components are 
given as follows:  
\begin{cor}\label{decompos}
In the situation of Lemma~\ref{lem:iso}, 
the restriction of the virtual class of the obstruction 
theory $\eE^\bullet$ in Theorem \ref{PT} to 
the type II components $\pP_{\rm{II}}$ is given by
\begin{align*}
[\pP_{\rm{II}}, \eE^{\bullet}]^{\rm{vir}}&=[\pP_{\rm{II}}]
\cap c_{top}(\Omega_{\pP_{\rm{II}}}).
\end{align*} 
\end{cor}

\subsection{Generating series of stable pair invariants}
Following \cite{a90}, let 
\begin{align}\label{loc:sys}
\V = R^2\pi_*(\Z)\to C
\end{align}
 be the rank 22 local system determined by the $K3$ fibration $\pi$. 
We denote by $\V_c$ the fiber of 
(\ref{loc:sys}) at $c \in C$.
 Let $\mathcal{H}^{\V}$ denote the $\pi$-relative moduli space of Hodge structures as in \cite[Section 1.4]{a90} \footnote{In \cite[Section 1.4]{a90} this is denoted by $\mathcal{M}^{\V}$.}, i.e. 
there is a map 
\begin{align*}
\hH^{\vV} \to C
\end{align*} such that each fiber
$\hH^{\vV_c}$ 
 at $c\in C$ is 
the moduli space of weight two Hodge structures on 
$\vV_{c} \otimes \mathbb{C}=H^2(S, \mathbb{C})$ 
for $S=\pi^{-1}(c)$. 
 There exists a section map \begin{equation}\label{equ:section}\sigma:C\rightarrow \mathcal{H}^{\V}\end{equation} which is determined by the Hodge structures of the fibers of $\pi$:
\begin{equation}\label{section-map}
\sigma(c)=[H^{2, 0}(S) \subset H^2(S, \mathbb{C})]
\in \mathcal{H}^{\V_{c}}, \ S=\pi^{-1}(c).
\end{equation}
For $c \in C$, 
an irreducible class $\beta \in H^2(X,\Z)^\pi$ and $h \in \Z$, 
we define
$$\V_c(h,\beta)=\{0\neq \gamma \in \V_c \mid \gamma^2=2h-2, 
i_{\ast}\gamma=\beta\}.$$
Also let $\operatorname{B}_c(h,\beta)\subseteq \V_c(h,\beta)$ be the subset of $\gamma \in \V_c(h,\beta)$ where $\gamma$ is a $(1,1)$ class on $S=\pi^{-1}(c)$. Then $\operatorname{B}_c(h,\beta)$ is finite by~\cite[Proposition~1]{a90}. 
The subset 
\begin{align*}
\operatorname{B}(h,\beta) \cneq \bigcup_{c\in C} \operatorname{B}_c(h,\beta)\subset \V
\end{align*}
 can be decomposed into 
$$\operatorname{B}(h,\beta)
=\operatorname{B}_{\rm{I}}(h,\beta)\coprod \operatorname{B}_{\rm{II}}(h,\beta)$$ where the first component defines a finite local system $\epsilon:\operatorname{B}_{\rm{I}}(h,\beta)\to C$, and the second component is an isolated set. Let $\pP_\epsilon$ be the connected component of 
the stable pair moduli
space $\pP=\pP_n(X, \beta)$ corresponding to the local system $\epsilon$, and 
let 
\begin{align*}
P_{n,\epsilon}=\int_{[\pP_\epsilon, \eE^\bullet]^{\rm{vir}}}1
\end{align*}
 be the contribution of this component to $P_{n,\beta}$. Note that $\pP_\epsilon$ is a type I component of $\pP_{n}(X,\beta)$ in the sense of Definition \ref{defn:comps}. By 
Proposition~\ref{prop:typeI}, 
we have 
\begin{equation}
[\pP_{\epsilon}, \eE^{\bullet}]^{\rm{vir}}=[\pP_{\epsilon}]\cap \left(c_{1}(\rho^{*}\mathcal{K}^{\vee})\cup c_{top}(\Omega_{\pP_{\epsilon/C}})\right). 
\end{equation}

On the other hand, 
suppose that 
 $\alpha \in \operatorname{B}_{\rm{II}}(h,\beta)$ 
supported on the fiber $S$ is a 
result of a \emph{transversal intersection} (i.e. with intersection multiplicity 1) of $\sigma(C)$ with a Noether-Lefschetz divisor. 
Then 
the corresponding connected component $\pP_\alpha$ of $\pP_{n}(X,\beta)$ is an isolated type II component in the sense of Definition \ref{defn:comps}, and 
it is isomorphic to $\pP_{n}(S,h)$ given in (\ref{P:k3}). 
By Corollary \ref{decompos}, we have 
\begin{equation}\label{virtual2} 
[\pP_\alpha, \eE^\bullet]^{\rm{vir}}=[\pP_\alpha]\cap c_{top}(\Omega_{\pP_\alpha}).
\end{equation} 
We define 
$$P_{n,\alpha}=\int_{[\pP_\alpha, \eE^\bullet]^{\rm{vir}}}1$$
 to be the contribution to $P_{n,\beta}$ of this component. For any integer $h \in \Z $, the Noether-Lefschetz number 
\begin{align*}
NL^\pi_{h,\beta} \in \mathbb{Z}
\end{align*}
 was defined in \cite{a90} by intersecting $\sigma(C)$ with the $\pi$-relative Noether-Lefschetz divisor in $\mathcal{H}^\V$ associated to $h$ and $\beta$. Informally, $NL^\pi_{h,\beta}$ is the number of the fibers $S$ of $\pi$ for which there exists a $(1,1)$ class $\gamma \in H^2(S,\Z)$ such that $$\gamma^2=2h-2 \;\;\text{and}\;\;  i_*\gamma=\beta.$$



The following theorem expresses the stable pair invariants of $X$ in terms of the Euler characteristics of of the moduli spaces of stable pairs on the fibers and the Noether-Lefschetz numbers:
\begin{thm} \label{thm:main formula}
For a smooth $K3$ fibration $\pi:X\to C$ and an irreducible class $\beta\in H_2(X,\Z)^\pi$, we have 
\begin{align*}
\PT(X)_\beta&=\sum_{h=0}^{\infty}\sum_{n=1-h}^{\infty}(-1)^{n-1}\chi(\pP_{n}(\kt,h))\cdot NL^{\pi}_{h,\beta}\; q^{n}. \notag\\
\end{align*}
Here, the Euler characteristics $\chi(\pP_{n}(\kt,h))$ are 
determined by  Kawai-Yoshioka's formula \eqref{equ:KW}.
\end{thm}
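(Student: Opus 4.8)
The plan is to reduce the computation of $P_{n,\gamma}$ to a sum over the connected components of $\pP=\pP_n(X,\gamma)$, evaluate each contribution using Corollary \ref{decompos}, and then recognize the resulting weighted count of components as the Noether--Lefschetz number $NL^\pi_{h,\gamma}$. Since $\gamma$ is irreducible and lies in $H_2(X,\Z)^\pi$, every stable pair is supported on an integral curve of some arithmetic genus $h$ (with $[C]^2=2h-2$) inside a single $K3$ fiber, so $\pP$ is partitioned according to the decomposition $\operatorname{B}(h,\gamma)=\operatorname{B}_{I}(h,\gamma)\coprod\operatorname{B}_{II}(h,\gamma)$ ranging over $h\ge 0$. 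By Proposition \ref{smoothness} each resulting component is nonsingular and is either a type I component $\pP_\epsilon$ fibered over $C$ by $\rho$ whose fibers have the Euler characteristic of $\pP_n(S,h)$, or an isolated type II component $\pP_\alpha\cong\pP_n(S,h)$.

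First I would dispose of the isolated components. For a type II component $\pP_\alpha$, Corollary \ref{decompos} gives $[\pP_\alpha,E^\bullet]^{vir}=[\pP_\alpha]\cap c_{top}(\Omega_{\pP_\alpha})$, so $P_{n,\alpha}=\int_{\pP_\alpha}c_{top}(\Omega_{\pP_\alpha})=(-1)^{\dim\pP_\alpha}\chi(\pP_\alpha)$. Since $\dim\pP_\alpha=n+2h-1$ and $\pP_\alpha\cong\pP_n(S,h)$, this contributes exactly $(-1)^{n+2h-1}\chi(\pP_n(S,h))$.

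Next the type I components. Here Corollary \ref{decompos} gives $[\pP_\epsilon,E^\bullet]^{vir}=[\pP_\epsilon]\cap\big(c_1(\rho^*\K^\vee)\cup c_{top}(\Omega_{\pP_\epsilon/C})\big)$. I would evaluate this by the projection formula along the smooth morphism $\rho\colon\pP_\epsilon\to C$: since $\Omega_{\pP_\epsilon/C}$ restricts on each fiber to the cotangent bundle of $\pP_n(S,h)$, fiberwise Gauss--Bonnet yields $\rho_*c_{top}(\Omega_{\pP_\epsilon/C})=(-1)^{n+2h-1}\chi(\pP_n(S,h))$, whence $P_{n,\epsilon}=(-1)^{n+2h-1}\chi(\pP_n(S,h))\cdot\deg_C(\K^\vee|_\epsilon)$. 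Thus both types of component of genus $h$ carry the same factor $(-1)^{n+2h-1}\chi(\pP_n(S,h))$, weighted by a multiplicity: $1$ for each isolated $\pP_\alpha$ and $\deg_C\K^\vee$ for each $\pP_\epsilon$.

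The heart of the proof, and the step I expect to be the main obstacle, is to recognize the sum of these multiplicities over all components of genus $h$ as $NL^\pi_{h,\gamma}$. By the definition of $NL^\pi_{h,\gamma}$ in \cite{a90} as the degree of the pullback $\sigma^*[D_{h,\gamma}]$ of the $\pi$-relative Noether--Lefschetz divisor $D_{h,\gamma}\subset\mathcal{H}^\V$ along the period section $\sigma$, this intersection number decomposes according to how $\sigma(C)$ meets $D_{h,\gamma}$: transversal intersection points contribute with multiplicity $1$ and correspond to the isolated components $\pP_\alpha$, while the positive-dimensional loci along which $\sigma(C)$ lies inside $D_{h,\gamma}$ contribute their excess intersection $\deg_C\sigma^*\big(\O_{\mathcal{H}^\V}(D_{h,\gamma})|_{D_{h,\gamma}}\big)$ and correspond to the type I components $\pP_\epsilon$. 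The crux is therefore the identification, under $\sigma$, of $\K^\vee$ with the normal bundle of the Noether--Lefschetz divisor, equivalently of $\K$ (the line bundle of Corollary \ref{decompos}, with fiber $H^0(S,K_X|_{\pi^{-1}(p)})$) with the Hodge line bundle, so that the excess multiplicity $\deg_C\K^\vee$ matches the geometric contribution above; one must also check that non-transversal type II intersections are reproduced with the correct multiplicities by the scheme structure of the corresponding isolated components. Granting this matching, summing over all components gives $P_{n,\gamma}=\sum_{h\ge0}(-1)^{n+2h-1}\chi(\pP_n(S,h))\,NL^\pi_{h,\gamma}$; multiplying by $q^n$ and summing over $n\ge 1-h$, the range in which $\pP_n(S,h)\neq\emptyset$, yields the stated formula, with the Euler characteristics supplied by Kawai--Yoshioka's formula \eqref{equ:KW}.
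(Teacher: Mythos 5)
Your decomposition into type I and type II components and the evaluation of each piece via Corollary \ref{decompos} is exactly the paper's argument; the divergence, and the genuine gap, lies in how you close the matching with $NL^{\pi}_{h,\gamma}$. You assert at the outset that every component is either a type I component or an \emph{isolated} type II component, but Proposition \ref{smoothness} gives no such dichotomy: when $\sigma(C)$ meets a Noether--Lefschetz divisor non-transversally at an isolated point, the corresponding component is type II but is not guaranteed to satisfy the tangent-sheaf condition of Definition \ref{defn:comps}, so neither Proposition \ref{smoothness} nor Corollary \ref{decompos} applies to it, and your formula $P_{n,\alpha}=(-1)^{n+2h-1}\chi(\pP_{n}(S,h))$ is unavailable for such components. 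Your proposed repair --- checking that non-transversal intersections ``are reproduced with the correct multiplicities by the scheme structure'' --- is precisely the hard step you never carry out. The paper's missing idea here is deformation invariance: both $P_{n,\gamma}$ and the Noether--Lefschetz numbers are deformation invariants, so one may deform until every $\alpha \in B_{II}(h,\gamma)$ arises from a transversal intersection; then every type II component is isolated, contributes $(-1)^{n+2h-1}\chi(\pP_{n}(S,h))$, and matches the NL contribution of exactly $1$, with no multiplicity analysis needed at all.

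For the type I components your computation agrees with the paper's (your $\deg_C(\K^{\vee}|_{\epsilon})$ is the paper's $\int_{B_I(h,\gamma)} c_1(\K^{\vee})$, integrated over the covering $B_I(h,\gamma)\to C$ rather than over $C$ itself). As for identifying this degree with the contribution of $B_I(h,\gamma)$ to $NL^{\pi}_{h,\gamma}$ --- the step you flag as the crux and then grant --- the paper does not prove it either: it cites the proof of Theorem 1 of \cite{a90} (pp.\ 22--23), where the identification follows from the definition \eqref{section-map} of the section map via the Hodge structure, which is exactly the identification of $\K$ with the Hodge line bundle that you request. So that portion of your outline is acceptable modulo the same external citation; the unhandled non-transversal type II case is the real gap, and it is filled by deformation invariance, not by excess-intersection bookkeeping.
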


\begin{proof}
The proof follows the same ideas as the proof of~\cite[Theorem~1]{a90}. We compare the contributions of $B_{\rm{I}}$ and $B_{\rm{II}}$ to 
$P_{n,\beta}$ and the Noether-Lefschetz numbers. Suppose that $\epsilon$ is a local system giving rise to $B_I(h,\beta)$ as above. Then, we can write 
\begin{align*}
P_{n,\epsilon}&=\int_{\pP_\epsilon}c_{top}(\Omega_{\pP_{\epsilon/C}})\cdot c_1(\rho^*\K^\vee)\\
&=(-1)^{n-1}\chi(P_{n}(\kt,h))\cdot \int_{B_{\rm{I}}(h,\beta)} 
c_1(\epsilon^*\K^\vee).
\end{align*}  By virtue of \eqref{section-map}, it is shown 
in~\cite[Theorem~1]{a90} that 
the integration 
$$\int_{B_{\rm{I}}(h,\beta)} c_1(\epsilon^{\ast}\K^\vee)$$ gives the contribution of $B_{\rm{I}}(h,\beta)$ to $NL^\pi_{h,\beta}$.  

Using the deformation invariance of the stable pair invariants, 
we may assume that any $\alpha \in B_{\rm{II}}(h,\beta)$ is a result of a transversal intersection of $\sigma(C)$ with a Noether-Lefschetz divisor\footnote{Here as in \cite[pg. 23]{a90}, one may need to make a local holomorphic perturbation of
the section $\sigma$ to make it transversal to the Noether-Lefschetz divisor (i.e. with the local intersection multiplicity 1). The $K3$ fibration after perturbation will be over an analytic curve, but the stable pairs under consideration  will remain on the $K3$ fibers, which
are algebraic. The corresponding type II component of the moduli space of stable pairs is always algebraic and compact and lies over the interior of
a small (analytic) open set in the curve that is being perturbed. So the usual deformation invariance of the intersection numbers gives the deformation invariance of the contribution of the stable pair invariants of this type II component.}. As a result, the contribution of $\alpha$ to $NL^\pi_{h,\beta}$ is exactly 1, and moreover,  the corresponding component $\pP_\alpha$ is an isolated type II component. 
Therefore by Corollary~\ref{decompos}, we have
$$
P_{n,\alpha}=\int_{\pP_\alpha}c_{top}(\Omega_{\pP_\alpha})=(-1)^{n-1}\chi(\pP_{n}(\kt,h)).
$$
The above arguments prove the desired identity. 
\end{proof}


\subsection{Reducible curve classes} \label{sec:reducible}
In this section we study a special analog of Theorem \ref{thm:main formula} in which the class $\beta \in H_2(X, \mathbb{Z})^{\pi}$ is allowed to be reducible.  

Let $\pP_{c}$ be an isolated type II component of 
$\pP_n(X, \beta)$. This means that we have an isomorphism 
$\pP_{c}\cong \pP_n(S, \gamma)$ where 
$\beta=i_{\ast}\gamma$ and $i:S\hookrightarrow X$ is a $K3$ fiber of $X$. Here, $\gamma$ is no longer needed to be irreducible, but we always assume that\footnote{This is the case for example if $\gamma$ is big and nef (see \cite[Proposition 3.1]{HuK3}).} \begin{equation} \label{H1vanish} H^1(L)=0\text{ for any line bundle on $S$ with } c_1(L)=\gamma.\end{equation}  
As $\gamma$ is not an irreducible class, 
the moduli space
$\pP_n(S,\gamma)$ may be singular and 
have several irreducible components.
However we have the following lemma:
\begin{lem} \label{dimcount}
Suppose that $\gamma$ satisfies \eqref{H1vanish}, then the dimension of each irreducible component of $\pP_n(S,\gamma)$ does not exceed $v:=n+\gamma^2+1$.
\end{lem}
\begin{proof}
In the notation of Proposition \ref{pushforward}, the dimension of 
$\M$ is $\gamma^2/2+1$, and for a fixed curve $D$ in class $\gamma$ the dimension of the components of the Hilbert scheme are at most $n+\gamma^2/2$. These two claims prove the lemma. The first claim follows because in this case $\M$ is the linear system of curves in class $\gamma$ on $S$ satisfying \eqref{H1vanish}. The second claim can be verified by analyzing the fibers of the Hilbert to Chow morphism from the Hilbert scheme of $k$ points on $D$ $$HC:\hilb^k(D)\to \text{Sym}^k(D).$$ We know that the dimension of the punctual Hilbert scheme of $a$ points supported on a fixed point of $S$ is equal to $a-1$ (see \cite{joel}). Since $D \subset S$ the dimension of the punctual Hilbert scheme of $a$ points supported on a fixed point of $D$ does not exceed $a-1$. Now given a $d$-dimensional diagonal $\Delta_d \subset \text{Sym}^k(D)$ corresponding to the partition $k=a_1+\cdots+ a_d$ and a point $p \in \Delta_d$, by what we said above, the dimension of $HC^{-1}(p)$ does not exceed $\sum_i (a_i-1)=k-d$. From this the claim follows.  
\end{proof}

By \cite[Proposition 3.4]{a112}
(also see Theorem~\ref{thm:trunc2}~(ii)), 
 the component 
$\pP_c=\pP_n(S,\gamma)$ is equipped with a perfect obstruction theory 
\begin{align*}
\gG^{\bullet}
\cneq \dR \hH om_{\pi_{\pP}'}(\mathbb{G},\Ibb_{\sS}
\otimes\omega_{\pi'_{\pP}} )|_{\pP_{c}}[2]
\rightarrow\mathbb{L}^{\bullet}_{\pP_{c}}.
\end{align*}
Note that the 
obstruction sheaf 
 $\hH^1(\gG^{\bullet \vee})$
 of $\gG^{\bullet}$
admits a surjection to 
the trivial vector bundle  
$\O_{\pP_{c}}$, given 
by the natural surjection
\begin{align}\label{surj}
\Hom(I_{S}^{\bullet}, G[1]) \twoheadrightarrow \Ext_S^2(G, G)
\twoheadrightarrow \mathbb{C}
\end{align}
at the fiber of 
$I_{S}^{\bullet}=(\oO_{S }\to G) \in \pP_c$. 
The second 
morphism of (\ref{surj})
is dual to 
$\mathbb{C} \cdot \id \subset \Hom(G, G)$. 
  By removing the trivial factor $\oO_{\pP_c}$ from 
the obstruction theory $\gG^{\bullet}$, 
Kool-Thomas~\cite{a112} constructs a $v$-dimensional reduced 
virtual cycle
\begin{align*}
[\pP_n(S,\gamma)]^{\rm{red}}
\in A_{v}(\pP_n(S, \gamma)).
\end{align*}
Here $v$ is given in Lemma~\ref{dimcount}.
Let $\eE^{\bullet}$ be the obstruction theory 
in Theorem~\ref{PT}
restricted to $\pP_c$.  
We define the following element in the $K$-group
\begin{align*}
\vV =-\eE^{\bullet \vee}+\gG^{\bullet \vee}
+\oO_{\pP_c} \in K(\pP_c). 
\end{align*}
Then $\vV$ is of constant rank $v$. 
\begin{prop}\label{prop:red} Suppose that $\gamma$ satisfies \eqref{H1vanish} and $v$ is as in Lemma~\ref{dimcount},
then we have the following identity: 
\begin{align}\label{id:red}
[\pP_{c}, \eE^\bullet]^{\rm{vir}}
=c_v(\vV) \cap [\pP_n(S, \gamma)]^{\rm{red}}. 
\end{align}
\end{prop}
\begin{proof}
The obstruction theories $\eE^\bullet$ and 
$\gG^\bullet$ are related by the following two natural exact triangles:
 \begin{align}\label{cone-off3} 
\dR\hH om_{\pi_{\pP}}(\Ibb, \mathbb{F})\to 
\dR\hH om_{\pi_{\pP}}(\Ibb, \Ibb)_{0}[1]
\rightarrow  \dR\hH om_{\pi_{\pP}}(\mathbb{F},\O_{X\times \pP})[2],
\end{align}
and
\begin{align}\label{cone-off4} 
\dR\hH om_{\pi_{\pP}'}(\Ibb_\sS,\GG)\to 
\dR\hH om_{\pi_{\pP}}(\Ibb,\mathbb{F})
\rightarrow \dR\hH om_{\pi'_{\pP}}(\GG(-\sS),\GG).
\end{align}
Here the triangle (\ref{cone-off3})
is obtained similarly to 
the bottom triangle of the diagram (\ref{cone-off}), 
replacing $\omega$ by $\oO_{X \times \pP}$.
The triangle (\ref{cone-off4})
follows from applying $\dR \hH om_{\pi_{\pP}'}(-, \mathbb{G})$
to the triangle (\ref{tri:GI}). 
The complexes $\eE^{\bullet \vee}$ and $\gG^{\bullet \vee}$ can be identified in the derived category with 2-term complexes 
\begin{align*}
\eE_0\to \eE_1 \quad \text{and} \quad \gG_0\to \gG_1
\end{align*}
 of vector bundles. Their $K$-group classes are then respectively 
$\eE_0-\eE_1$ and $\gG_0-\gG_1$. We define the element $\vV'$ 
of the $K$-group by 
\begin{align*}
\vV'=\eE_1-\eE_0-(\gG_1-\gG_0).
\end{align*}
Then we have $\vV=\vV'+\oO_{\pP_c}$. 
 By the exact triangles (\ref{cone-off3}), (\ref{cone-off4}) above, the fiber of $\vV'$ over a closed point $(\O_S\to G)\in \pP_c$
 is naturally given by $\chi(G)-
\chi(G, G)$. 

Using these facts
and noting that the summation of $\oO_{\pP_c}$ does not affect the 
total Chern class, the result of~\cite[Theorem 4.6]{Siebert2}
implies 
 \begin{align} \label{equ:isocomp}
 [\pP_{c}, \eE^\bullet]^{\rm{vir}}&=\{c(\eE_1-\eE_0)\cap c_F(\pP_{c})\}_0\\ \notag
&=\{(c(\vV)\cup c(\gG_1-\gG_0))\cap c_F(\pP_n(S,\gamma))\}_0.
\end{align}
Here $\{-\}_{r}$ means the $r$-dimensional part, and 
$c_F$ denotes Fulton's canonical class. 
Similarly, we have
\begin{align*}
[\pP_n(S, \gamma)]^{\rm{red}}=
\{c(\gG_1-\gG_0) \cap c_F(\pP_n(S, \gamma))\}_{v}.
\end{align*}
By 
the discussion in~\cite[Section 4.1]{Siebert2}, 
we know that 
\begin{align}\label{c:vanish}
\{c(\gG_1-\gG_0)\cap c_F(\pP_n(S,\gamma))\}_r=0
\end{align}
 for $r<v-1$.
It is also 0 for $r=v-1$ because as mentioned earlier the obstruction theory $\gG^\bullet$ contains a trivial factor, hence 
$[\pP_c, \gG^{\bullet}]^{\rm{vir}}=0$. 
The vanishing (\ref{c:vanish}) also holds for $r>v$ because $\dim \pP_n(S,\gamma) \le v$ by Lemma~\ref{dimcount}. 
Therefore by the dimension reason, we obtain the
desired identity (\ref{id:red}). 
\end{proof}

\begin{cor} \label{irred}
In the situation of Proposition \ref{prop:red}, if $\gamma$ is irreducible then \begin{align*}[\pP_n(S,\gamma)]&=[\pP_n(S,\gamma)]^{\rm{red}},\\ c_v(\vV)\cap[\pP_n(S,\gamma)]^{\rm{red}}&= c_v(\Omega_{\pP_n(S,\gamma)})\cap [\pP_n(S,\gamma)].\end{align*}
\end{cor}
\begin{proof} In this case by Proposition \ref{prop:smooth}, $\pP_n(S,\gamma)$ is smooth of dimension $v$, which is the same as the virtual dimension of $[\pP_n(S,\gamma)]^{\rm{red}}$. This proves the first equality. The second equality follows by noting that both sides give $[\pP_{c}, \eE^\bullet]^{\rm{vir}}$ by Proposition \ref{prop:red} and Corollary \ref{decompos}.\end{proof}

\begin{cor} \label{deform}
In the situation of Proposition \ref{prop:red}, suppose that there is a smooth deformation of $S$ to a $K3$ surface in which $\gamma$ becomes irreducible. Then $$\int_{[\pP_n(S,\gamma)]^{\rm{red}}}c_v(\vV)=(-1)^v\chi(\pP_n(\kt,h)),$$ where $h=\gamma^2/2+1$.
\end{cor}
\begin{proof} Since we are working with an isolated type II component, the virtual bundle $\vV$ depends on $S$ only. To see this first note that by Verdier duality applied to the closed immersion $S\times \pP_c\to X\times \pP_c$ and that $\omega_S=\O_S\cong \omega_X|_S$, we have  $$\dR\hH om_{\pi_{\pP}}(\mathbb{F},\O_{X\times \pP})\cong  \dR\hH om_{\pi'_{\pP}}(\mathbb{G},\O_{S\times \pP})[-1].$$ Using this and the exact triangles \eqref{cone-off3} and \eqref{cone-off4}, it is clear that the K-group class $[\dR\hH om_{\pi_{\pP}}(\Ibb, \Ibb)_{0}]$ only depends on the data on $S$, and hence the same is true for $\vV$. The corollary now follows from Corollary \ref{irred} and deformation invariance of the reduced virtual cycle for $(1,1)$ classes. 
\end{proof}

In the following special situation we can prove an analog of Theorem \ref{thm:main formula} for possibly reducible classes $\beta$: 

\begin{thm} \label{rmk:lasteq}
For a smooth $K3$ fibration $\pi:X\to C$ suppose that the class $\beta\in H_2(X,\Z)^\pi$ is such that
$\pP_n(X, \beta)$ consists of only isolated type II components. For any such component $\pP_n(S, \gamma)$ as above,  suppose that $\gamma$ satisfies the condition \eqref{H1vanish}.
Then, 
\begin{align}\label{form:red}
P_{n, \beta}=\sum_{h=0}^{\infty}
(-1)^{n-1} \chi(\pP_n(\kt,h)) \cdot 
NL^{\pi}_{h,\beta}.
\end{align} 
\end{thm}
\begin{proof}
Since all the components of $\pP_n(X, \beta)$ are isolated type II, for any such component $\pP_n(S, \gamma)$, we necessarily have  $\gamma=\gamma_1+\cdots +\gamma_r$ where $\gamma_i$'s are distinct irreducible classes\footnote{If any of $\gamma_i$'s is not reduced then one could have a stable pair whose support infinitesimally thickens outside of the supporting $K3$ fiber $S$ contradicting the assumption that all the components  of $\pP_n(X, \beta)$ are isolated type II.}. In particular, $\gamma$ is a primitive class and hence one can find a deformation of $S$ as in Corollary \ref{deform}. This together with similar argument as in the proof of Theorem~\ref{thm:main formula} give the result.
\end{proof}

\begin{rmk}
The 
right hand side of the formula (\ref{form:red})
can be obtained by Kawai-Yoshioka's formula \eqref{equ:KW}.
\end{rmk}

\section{Stable pairs on nodal $K3$ fibrations}\label{sec:nodalK3}
\subsection{Nodal $K3$ fibrations} \label{sec:conifold}
In this section,
 we aim to prove the compatibility condition for stable
pair invariants via conifold transitions. 
Let $X$ be a smooth projective 3-fold, and 
$\pi$ be a $K3$ fibration
\begin{align*}
\pi:X\rightarrow C
\end{align*}
onto a smooth projective curve $C$. 
A $K3$ fibration $\pi$ is called a \textit{nodal K3 fibration}
if the singularities of the fibers of $\pi$
are at worst 
ordinary double point (ODP)
singularities. 
\begin{exam}\label{exam:nodal}
Let $X \subset \mathbb{P}^3 \times \mathbb{P}^1$
be a generic hyperplane section of bidegree $(4, 2)$, and 
$\pi$ the composition
\begin{align*}
\pi \colon X \subset \mathbb{P}^3 \times \mathbb{P}^1 \to \mathbb{P}^1
\end{align*}
where the second morphism is the projection. Then $\pi$
is a nodal $K3$ fibration. 
\end{exam}
For any $\beta \in H_2(X,\Z)^\pi$, the stable pair invariants $P_{n,\beta}$ are defined as before by
\begin{align*}
P_{n, \beta} = 
\int_{[\pP_n(X, \beta)]^{\rm{\rm{vir}}}} 1,
\end{align*} where ${[\pP_n(X, \beta)]^{\rm{\rm{vir}}}}$ is the 0-dimensional virtual cycle associated to the obstruction theory in Theorem \ref{PT}.
We will study the generating series $\PT(X)_{\beta}$, 
$\PT(X)$ given by (\ref{gen:PT}). 
 \begin{rmk}
When the singularities of fibers of $\pi$
are more general type of rational double points (RDP), 
then the study of stable pair invariants may be reduced to 
the nodal case by using the deformation invariance of 
stable pair invariants.   
\end{rmk}
\subsection{Conifold transition}
Let 
\begin{align*}s_1,\dots, s_k\in X, \ 
c_1, \cdots, c_{k'} \in C
\end{align*}
are the singular points of the fibers of $\pi \colon X \to C$, 
the points in $C$ over which the fibers have singularities, 
respectively.  
By our assumption, 
each $s_i \in X$ is an ordinary double point (nodal) singularity for any $i$
in the fiber of $\pi$. 
If $k'$ is even, define $c_0=c_1$ and if $k'$ is odd, define $c_0$ to be an arbitrary point of $C$ distinct from $c_1,\dots,c_{k'}$. 
Let
\begin{align*}
\epsilon:\tC\to C
\end{align*}
be the double cover of $C$ branched over the points $c_0, c_1, \dots,c_{k'}$. 
It can be seen that $X_0 \cneq 
\epsilon^*(X)$ is a threefold with the conifold singularities. 
Let
$h \colon \tX \to X_0$
be its small resolution with the exceptional nonsingular rational curves 
\begin{align}\label{exc}
e_1, \cdots, e_k \subset \tX, \ 
h(e_i)=s_i
\end{align}
and $\tpi:\tX\to \tC$ the induced morphism. 
In general, the small resolution $\widetilde{X}$ may 
not be a projective variety, 
but is realized as an algebraic space by~\cite{Artin}. 
As a summary, we have the commutative diagram
\begin{align}\label{Cartesian}
\xymatrix{
\widetilde{X} \ar[r]^{h} \ar[rd]_{\widetilde{\pi}}
\ar@/^18pt/[rr]^{\widetilde{\epsilon}}
 & X_0 \ar[d] \ar[r] \ar@{}[dr]|\square & X \ar[d]^{\pi} \\
&   \widetilde{C}  \ar[r]^{\epsilon}  &   C. 
}
\end{align}
The normal bundle of $e_i$ in $\tX$ is isomorphic to $\oO_{\mathbb{P}^1}(-1)\oplus \oO_{\mathbb{P}^1}(-1)$ \cite{a121}. Moreover, let $\epsilon_t: \tC_t\to C$ be a double cover of $C$ branched at $k+2\{k/2\} $ generic points\footnote{$\{k/2\}=0$ if $k$ is even, and $\{k/2\}=1/2$ if $k$ is odd.} of $C$ when $t\neq 0$, and set $\tC_0=\tC$. Define $X_t=\epsilon^*_t(X)$, 
so we have the Cartesian square
\begin{align*}
\xymatrix{
 X_t \ar[d] \ar[r] \ar@{}[dr]|\square
& X \ar[d]^{\pi}  \\
   \widetilde{C}_t  \ar[r]^{\epsilon_t}  &   C. 
}
\end{align*}
 Our plan is to relate stable pair 
 invariants of $\tX$ and $X_t$ which differ by the conifold transitions. As in GW theory~\cite{a96}, \cite{a97}, this can be done using degeneration techniques.

\begin{rmk}\label{rem:nonproj}
We will not pursue the 
foundation of moduli theory of stable pair invariants 
on the algebraic space $\widetilde{X}$. 
The Hilbert schemes of curves 
 on algebraic spaces are realized as algebraic 
spaces~\cite[Corollary~6.2]{Artin2}, 
and the similar argument may be applied 
to construct
 the moduli spaces of stable pairs on 
$\widetilde{X}$ as algebraic spaces. 
Also 
in the situation that any stable pair 
on $\widetilde{X}$ is scheme theoretically 
supported on a $K3$ fiber,
then 
 by Proposition~\ref{pushforward}, the moduli space of stable pairs 
can alternatively be constructed as a relative Hilbert scheme of points over 
the universal curve of the relative linear system.
Hence it is an algebraic space by~\cite[Corollary~6.2]{Artin2}.
\end{rmk}

\begin{rmk} \label{alg-space-degen}
If the moduli space of 
stable pairs on $\tX$ is an algebraic space, then 
the arguments in~\cite{BF}
show the existence of the
zero dimensional
 virtual fundamental 
class. 
Below 
we will also assume that 
Li-Wu's degeneration formula~\cite{a87} 
works for 
stable pair invariants 
on algebraic spaces. 
Of course, we don't have to 
address the foundational matters in Remark~\ref{rem:nonproj}
and this remark
if we can choose the small resolution $\widetilde{X}$ to be
projective.   
\end{rmk}

\subsection{Relative stable pair invariants}
In order to apply the degeneration technique, we 
will use the notion of relative stable pair theory
given by Li-Wu~\cite{a87}. 
\begin{defi}
Let $W$ be a smooth projective threefold and $D\subset W$ a smooth divisor. 
For $\beta \in H_2(W, \mathbb{Z})$ and $n\in \mathbb{Z}$, we denote 
by  
$\pP_n(W/D, \beta)$ the moduli stack of relative stable pairs 
$(\oO_W \to F)$ on $W$
satisfying 
$[F]=\beta$ and $\chi(F)=n$, 
in the sense of~\cite{a87}. 
\end{defi}
There is an open substack of
$\pP_n(W/D, \beta)$ whose $\CC$-points 
correspond to pairs $(\oO_W \to F)$
such that $F$ is supported on $W$ and normal to $D$, 
i.e. $\tT or_1^{\oO_X}(F, \oO_D)=0$. 
A $\CC$-point of the boundary corresponds to an admissible 
stable pair
(cf.~\cite[Definition~4.8]{a87})
supported on an $n$-step degeneration of $(W[n], D[n])$. 
By the relativity of stable pairs, 
the restriction map defines the morphism
\begin{align*}
\mathrm{ev} \colon 
\pP_n(W/D, \beta) \to \Hilb(D, \lvert \eta \rvert). 
\end{align*}
Given a cohomology weighted partition 
$\eta$ with respect to 
a basis of $H^{\ast}(D, \mathbb{Q})$, we can associate 
a cohomology class 
\begin{align*}
C_{\eta} \in H^{\ast}(\Hilb(D, \lvert \eta \rvert), \mathbb{Q})
\end{align*} 
which forms a basis of $H^{\ast}(\Hilb(D, \lvert \eta \rvert), \mathbb{Q})$
called \textit{Nakajima basis} (cf.~\cite{a127}). 
If we choose a basis of $H^{\ast}(D, \mathbb{Q})$ which is self-dual 
with respect to the Poincar\'e pairing, 
then for each cohomology weighted partition 
$\eta$, there is a dual partition $\eta^{\vee}$ such that
\begin{align*}
\int_{\Hilb(D, \lvert \eta \rvert)}
C_{\eta} \cup C_{\nu} = \frac{(-1)^{\lvert \eta \rvert-l(\eta)}}{a(\eta)}
\delta_{\nu, \eta^{\vee}}
\end{align*}
for any cohomology weighted partition $\nu$ with 
$\lvert \nu \rvert = \lvert \eta \rvert$. 
Here $l(\eta)$ is the length of the partition $\eta$
and $a(\eta)$ is defined by 
\begin{align*}
a(\eta)= \prod_{i} \eta_i \lvert \Aut(\eta) \rvert. 
\end{align*}
\begin{defi}
The relative stable pair invariant (without insertions) is defined by 
\begin{align*}
P_{n, \beta}(W/D)_{\eta} \cneq 
\int_{[\pP_n(W/D, \beta)]^{\rm{\rm{vir}}}} \mathrm{ev}^{\ast} C_{\eta}. 
\end{align*}
\end{defi}
The virtual dimension of $P_n(W/D, \beta)$
is given by $c_1(W) \cdot \beta$. Therefore the above invariant 
is zero unless
\begin{align*}
c_1(W) \cdot \beta = \deg C_{\eta} \ge 0. 
\end{align*}
We define the following generating series
\begin{align*}
\PT(W/D)_{\beta, \eta} &= \sum_{n} P_{n, \beta}(W/D)_{\eta}q^n \\
\PT(W/D)_{\eta} &= \sum_{\beta} \PT(W/D)_{\beta, \eta}t^{\beta}. 
\end{align*}
We drop $\eta$ or $D$ from the notation if respectively $\lvert \eta \vert =0$ or $D=\emptyset$.

\subsection{Degeneration formula}
Let $Y$ be the 
threefold obtained by blowing up $X_0$ at 
all the conifold points, with the exceptional divisors
\begin{align*}
D\cneq 
D_1 \sqcup \dots \sqcup D_k, \ 
D_i=\mathbb{P}^1 \times \mathbb{P}^1.
\end{align*}
Note that we have the factorization
\begin{align*}
f \colon Y \stackrel{g}{\to} \tX \stackrel{h}{\to} X_0
\end{align*}
such that $g$ is the blowing up at all the exceptional loci (\ref{exc})
of 
$h$. 
 We use degenerations of the threefolds $\tX$ and $X_t$ to respectively 
\begin{align}\label{degen}
Y \bigcup_{D_1,\dots,D_k} \coprod_{i=1}^k \bP_{1} \quad \text{and}  \quad Y \bigcup_{D_1,\dots,D_k} \coprod_{i=1}^k \bP_{2}. 
\end{align} 
Here $\bP_1$ is 
given by 
$$\bP_{1}\cong \P(\O_{\P^1}\oplus \O_{\P^1}(-1) \oplus \oO_{\P^1}(-1))$$ and 
$\bP_{2}$ is a smooth quadric hypersurface in $\P^4$.

The first degeneration in (\ref{degen}) is the degeneration to 
the normal cone \cite{a35} in which $D_i \subset Y$ is attached to the divisor at infinity $H_1=\P(\O_{\P^1}(-1)^2)$ in the $i$-th copy of $\bP_1$. The second degeneration is called the semi-stable reduction of a conifold degeneration 
\cite{a96} in which $D_i \subset Y$ 
is attached to a smooth hyperplane section $H_2$ in the $i$-th copy of $\bP_2$.
We denote by 
\begin{align*}
\X_1 \to \mathbb{A}^1, \quad \X_2\to \mathbb{A}^1
\end{align*} the total spaces of the first and second degenerations above. 
Let $L$ be an ample line bundle on $X$. 
We define line bundles $L_t$ on $X_t$ 
to be 
\begin{align*}
L_t= \epsilon^{\ast}_t(L)
\end{align*}
where we set $\epsilon_0 \cneq \epsilon$.

Now we apply the degeneration formula of 
stable pair invariants with respect to the above degenerations. 
Suppose for simplicity
that there is only one critical locus 
for the fibration 
$X \to C$, i.e. $k=1$. 
 By the degeneration formula, 
we obtain the following identities
for $\beta \in H_2(\tX, \mathbb{Z})^{\tpi}$
\begin{align}\label{deg:1}
&\PT(\tX)_{\beta}= \\
\notag
&\sum_{\eta, \beta_1 + \beta_2=\beta} 
\PT(Y/D_1)_{\beta_1, \eta} 
\frac{(-1)^{\lvert \eta \rvert -l(\eta)}a(\eta)}{q^{\lvert{\eta}\rvert}}\PT(\bP_1/H_1)_{\beta_2, \eta^{\vee}}
\end{align}
and 
\begin{align}\label{deg:2}
&\PT(X_t)_{\beta}=\\
\notag
&\sum_{\eta, \beta_1 + \beta_2=\beta} 
\PT(Y/D_1)_{\beta_1, \eta} 
\frac{(-1)^{\lvert \eta \rvert -l(\eta)}a(\eta)}{q^{\lvert{\eta}\rvert}}\PT(\bP_2/H_2)_{\beta_2, \eta^{\vee}}. 
\end{align}
Here the sum $\beta_1 + \beta_2 =\beta$ is an equality in 
$H_2(\X_1)$ and $H_2(\X_2)$ respectively.


\begin{prop}\label{prop:deg1}
The degeneration formula (\ref{deg:1}) implies that
\begin{align*}
\PT(\tX)_{\beta}= 
\sum_{\begin{subarray}{c} 
\beta_1 \in H_2(X_0), \ \beta_2 \in H_2(\tX) \\
h^{!}\beta_1 + \beta_2=\beta, \ h_{\ast}\beta_2=0
\end{subarray}}
\PT(Y/D)_{f^{!}\beta_1} \cdot \PT(\tX)_{\beta_2}. 
\end{align*}
\end{prop}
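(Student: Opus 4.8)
The plan is to start from the degeneration formula \eqref{deg:1} and reorganize its right-hand side into the asserted product by (i) computing the relative invariants of the normal-cone component $\bP_1/H_1$ and (ii) re-summing over the cohomology-weighted partitions $\eta$. First I would reduce to the case $k=1$ recorded in the excerpt. The degeneration to the normal cone is performed simultaneously and independently at each exceptional curve $e_i$, so the degeneration formula factorizes over the components $D_i$ and $\bP_1^{(i)}$; the general statement then follows by iterating the one-curve argument over the $k$ critical loci.

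Next I would fix the curve-class bookkeeping. The degeneration of $\tX$ to $Y \cup_{D_1}\bP_1$ is precisely the degeneration to the normal cone of $e_1\subset\tX$, so that $\bP_1=\P(N_{e_1/\tX}\oplus\O)=\P(\O(-1)^{\oplus 2}\oplus\O)$ and the gluing divisor is $H_1=\P(\O(-1)^{\oplus 2})=D_1\cong\P^1\times\P^1$. Writing a class on $\bP_1$ as $\beta_2=a[\ell]+b[f]$, with $\ell$ the zero section $\cong e_1$ and $f$ a line in the $\P^2$-fibre, I would record the numerical identity $H_1\cdot\beta_2=b=\lvert\eta\rvert$. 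Consequently $\eta=\emptyset$ holds exactly when $\beta_2=a[\ell]$ is a multiple of the exceptional class, that is, when $h_{\ast}\beta_2=0$; and since $h$ is a small resolution with $\ker h_{\ast}=\bigoplus_i\Z[e_i]$, the splitting $\beta=h^{!}\beta_1+\beta_2$ with $h_{\ast}\beta_2=0$ is the one governing the target sum.

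The central input is the identification of the cap invariants with absolute local invariants. Because $\bP_1\setminus H_1\cong\mathrm{Tot}(\O(-1)^{\oplus 2}\to\P^1)$ is analytically a neighbourhood of $e_1$ in $\tX$, and curves of class $a[\ell]$ are disjoint from $H_1$, deformation invariance yields $\PT(\bP_1/H_1)_{a[\ell],\emptyset}=\PT(\tX)_{a[\ell]}$, supplying the factor $\PT(\tX)_{\beta_2}$ with $h_{\ast}\beta_2=0$. For the terms with $\eta\neq\emptyset$ I would analyze the relative geometry of the $\P^2$-bundle $\bP_1/H_1$ in the vertical classes $b[f]$ and show that the cap theory factorizes into a horizontal (exceptional) part in $a[\ell]$ and a vertical boundary-connecting part in $b[f]$. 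Substituting into the summand of \eqref{deg:1}, the horizontal part produces $\PT(\tX)_{\beta_2}$, while the vertical part, paired against the $Y$-side through the Nakajima pairing
\begin{align*}
\sum_{\eta}\PT(Y/D_1)_{\beta_1,\eta}\,\frac{(-1)^{\lvert\eta\rvert-l(\eta)}a(\eta)}{q^{\lvert\eta\rvert}}\,\PT(\bP_1/H_1)_{\beta_2,\eta^{\vee}},
\end{align*}
should reconstruct — reading the degeneration formula backwards along $D_1$ — the absolute invariant $\PT(Y/D)_{f^{!}\beta_1}$ with empty relative condition. Collecting terms by the decomposition $\beta=h^{!}\beta_1+\beta_2$ then gives the claimed formula.

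I expect the main obstacle to be exactly this last reorganization: proving that the vertical boundary-connecting contributions of the cap, summed against the relative $Y/D_1$ theory over all $\eta$, collapse precisely to $\PT(Y/D)_{f^{!}\beta_1}$ rather than to some twisted variant, and that the cap theory genuinely factorizes as horizontal $\times$ vertical. This amounts to controlling the rubber/relative $\bP_1/H_1$ series beyond its leading term and establishing the compatibility of the two degenerations — the normal cone of $e_1$ and the auxiliary separation of $\ell$ from $f$ — at the level of the gluing pairing, together with the matching of $f^{!}$ and $h^{!}$ on curve classes. Verifying this cap-factorization and its compatibility with the class decomposition is where the genuine work lies; the remaining steps are then formal resummation.
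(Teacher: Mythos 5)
Your reduction to $k=1$, your class bookkeeping on $\bP_1$, and your identification $\PT(\bP_1/H_1)_{a[\ell],\emptyset}=\PT(\tX)_{a[\ell]}$ (the local $(-1,-1)$-curve contribution) all match the paper's argument. But there is a genuine gap at exactly the point you flag as "where the genuine work lies": your plan to keep the $\eta\neq\emptyset$ terms and re-sum them so that they "reconstruct" $\PT(Y/D)_{f^{!}\beta_1}$ cannot work, and no cap-factorization is needed. The correct mechanism, which the paper takes from Hu--Li, is that \emph{every} term of \eqref{deg:1} with $\eta\neq\emptyset$ vanishes identically, by a virtual dimension count on the $Y$-side. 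Concretely: since the curve classes lie in fibers of the $K3$ fibration, the pullback of $K_{X_0}$ pairs trivially with $\beta_1$, so $c_1(Y)\cdot\beta_1=-D_1\cdot\beta_1$; non-vanishing of $\PT(Y/D_1)_{\beta_1,\eta}$ forces $c_1(Y)\cdot\beta_1=\deg C_{\eta}\ge 0$, while the gluing compatibility gives $D_1\cdot\beta_1=H_1\cdot\beta_2=\lvert\eta\rvert\ge 0$. Together these force
\begin{align*}
0=\lvert\eta\rvert=\beta_1\cdot D_1=\beta_2\cdot H_1,
\end{align*}
so only the $\eta=\emptyset$, $b=0$ terms survive, and the proposition follows from your step (i) alone.

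Note also that your re-summation is ruled out by class bookkeeping, independently of any rigor concerns: in the asserted formula the $Y$-side classes are exactly the pullbacks $f^{!}\beta_1$ with $\beta_1\in H_2(X_0)$, which satisfy $D\cdot f^{!}\beta_1=0$. A term of \eqref{deg:1} with $\lvert\eta\rvert=D_1\cdot\beta_1>0$ involves a class that simply does not occur on the right-hand side of the proposition, so such terms cannot be absorbed into it by any rearrangement; they must be zero. Moreover, "reading the degeneration formula backwards along $D_1$" only returns the absolute invariant of the glued (degenerate) threefold, i.e.\ $\PT(\tX)_{\beta}$ itself, not a product of two absolute-type series, so that step is circular. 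Replace your $\eta\neq\emptyset$ analysis with the dimension/degree vanishing argument above and the proof closes.
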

\begin{proof}
The degeneration formula of relative rank one DT invariants 
for the blow-up at $(-1, -1)$-curves 
is worked out by Hu-Li~\cite{Hu-Li}, and we apply the same argument. 
By the agreement of the virtual dimensions, it 
is proved in~\cite[Theorem~4.2]{Hu-Li}
that a non-zero term of the RHS of (\ref{deg:1})
satisfies
\begin{align*}
0=
\lvert \eta \vert  = \beta_1 \cdot D_1 = \beta_2 \cdot H_1. 
\end{align*}
This implies that $\beta_1$ is written as $h^{!}\beta_1'$ for 
some $\beta_1' \in H_2(X_0)$, and $\beta_2$ is a multiple of 
the class of the curve $e \subset \bP_1$
given by the embedding 
\begin{align*}
\oO_{\bP_1} \subset \oO_{\bP_1} \oplus \oO_{\bP_1}(-1) \oplus \oO_{\bP_1}(-1)
\end{align*}
into the first factor. 
The curve $e$ is a $(-1, -1)$-curve which does not intersect with $H_1$. 
The contributions of the relative stable pairs on $\bP_1$
with curve class $m[e]$ is 
identified with the stable pair invariants on $\tX$ with curve class 
$m[e_1]$\footnote{This follows by a parallel argument as (4.4) in the proof of \cite[Theorem~4.2]{Hu-Li}.}. Hence we obtain the desired result
for $k=1$. The case of $k>1$ is similarly discussed. 
\end{proof}

\begin{prop}\label{prop:deg2}
For any $d\in \mathbb{Z}_{>0}$, the 
degeneration formula (\ref{deg:2}) implies that
\begin{align*}
\sum_{\begin{subarray}{c}
\beta \in H_2(X_t) \\
 L_t \cdot \beta=d
\end{subarray}}
\PT(X_t)_{\beta}=
\sum_{\begin{subarray}{c}
\beta' \in H_2(X_0) \\
L_0 \cdot \beta'=d
\end{subarray}}
\PT(Y/D)_{f^{!}\beta'}. 
\end{align*}
\end{prop}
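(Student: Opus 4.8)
The plan is to start from the second degeneration formula $(\ref{deg:2})$, sum both sides over all $\beta \in H_2(X_t)$ with $L_t \cdot \beta = d$, and show that after this summation the relative factors coming from the quadrics $\bP_2$ contribute only through their trivial term. What survives is then exactly the claimed sum of $\PT(Y/D)_{f^!\beta'}$. The argument rests on two essentially independent inputs: the extended polarization is trivial on every quadric component, and the relative invariants of $(\bP_2,H_2)$ away from the empty configuration are forced to vanish by a virtual dimension count.

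First I would extend $L_t$ to a line bundle $\mathcal{L}$ on the total space $\X_2$ of the degeneration. Since $\bP_2$ is the exceptional locus of the semistable reduction of the conifold degeneration and is contracted to a conifold point by the natural map of $\X_2$ onto the original smoothing family \cite{a96, a97}, the restriction $\mathcal{L}|_{\bP_2}$ is pulled back from a point and is therefore trivial; consequently $L_t \cdot \beta_2 = 0$ for every class $\beta_2$ supported on $\bP_2$. On the other hand $\mathcal{L}|_Y = f^{\ast}L_0$, so for a splitting $\beta = \beta_1 + \beta_2$ with $\beta_1$ on $Y$ and $\beta_2$ on $\bP_2$, the condition $L_t \cdot \beta = d$ is equivalent to $L_0 \cdot f_{\ast}\beta_1 = d$ alone. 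This decouples the degree constraint from the quadric factor entirely.

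The crux is to show that only the empty term of the quadric survives. Using that the virtual dimension of $\pP_n(X_t,\beta)$ is zero, i.e.\ $c_1(X_t)\cdot \beta = 0$, together with the normal-crossing adjunction relations $K_{\X_2}|_Y = K_Y + D_1$ and $K_{\X_2}|_{\bP_2} = K_{\bP_2} + H_2$, and the matching $D_1 \cdot \beta_1 = H_2 \cdot \beta_2 = \lvert \eta \rvert$, I would derive $c_1(Y)\cdot \beta_1 + c_1(\bP_2)\cdot \beta_2 = 2\lvert \eta \rvert$. Now $\bP_2$ is the Fano quadric threefold, whose Picard group has rank one with $c_1(\bP_2) = 3H_2$ on curve classes; hence $c_1(\bP_2)\cdot \beta_2 = 3\lvert \eta \rvert$ and therefore $c_1(Y)\cdot \beta_1 = -\lvert \eta \rvert$. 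Since a nonzero relative invariant requires $c_1(Y)\cdot \beta_1 = \deg C_\eta \ge 0$, this forces $\lvert \eta \rvert = 0$, so $\eta$ is the empty partition. Then $\beta_2 \cdot H_2 = 0$ with $H_2$ ample and $\beta_2$ effective gives $\beta_2 = 0$, and the quadric factor reduces to $\PT(\bP_2/H_2)_{0,\emptyset} = 1$ with trivial weight.

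It remains to assemble the surviving sum. With $\eta = \emptyset$ the gluing condition becomes $\beta_1 \cdot D_1 = 0$; exactly as in the proof of Proposition \ref{prop:deg1}, a class with zero intersection against the exceptional divisor $D_1$ is of the form $\beta_1 = f^{!}\beta'$ for a unique $\beta' \in H_2(X_0)$, whence $f_{\ast}\beta_1 = \beta'$ and $L_0 \cdot f_{\ast}\beta_1 = L_0 \cdot \beta'$. Substituting, the right-hand side collapses to $\sum_{L_0 \cdot \beta' = d} \PT(Y/D)_{f^{!}\beta'}$, which is the assertion. The hard part will be the bookkeeping of the third paragraph: one must verify the adjunction and discrepancy computation carefully, and use the Fano positivity of the quadric together with the degree constraint to force $\lvert \eta \rvert = 0$, and separately justify the triviality of $\mathcal{L}|_{\bP_2}$. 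Once these are in place the collapse of the quadric is automatic and the remaining steps run parallel to Proposition \ref{prop:deg1}.
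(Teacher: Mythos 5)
Your proof is correct, and its skeleton is the one the paper uses (start from (\ref{deg:2}), kill all terms with $\lvert \eta \rvert \neq 0$ by a dimension count, kill $\beta_2$ by ampleness of $H_2$, then rearrange), but the key dimension constraint is derived by a genuinely different computation. The paper gets it in one line from the geometry of $f\colon Y \to X_0$: since $Y$ is the blow-up of the conifold with $K_Y = f^{\ast}K_{X_0} + D_1$ and $c_1(X_0)$ annihilates the (fiberwise) class $f_{\ast}\beta_1$, one has $c_1(Y)\cdot \beta_1 = -D_1\cdot \beta_1$, which combined with the nonvanishing requirement $c_1(Y)\cdot\beta_1 = \deg C_{\eta} \ge 0$ and the matching $D_1\cdot\beta_1 = H_2\cdot \beta_2 = \lvert \eta \rvert \ge 0$ forces everything to vanish at once. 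You instead reach the same equation $c_1(Y)\cdot\beta_1 = -\lvert\eta\rvert$ intrinsically from the degeneration: total-space adjunction $K_{\X_2}|_Y = K_Y + D_1$ and $K_{\X_2}|_{\bP_2} = K_{\bP_2} + H_2$, the Fano index of the quadric threefold ($c_1(\bP_2) = 3H_2$), and the vanishing $c_1(X_t)\cdot\beta = 0$; your arithmetic $2\lvert\eta\rvert - 3\lvert\eta\rvert = -\lvert\eta\rvert$ checks out. What your route buys: it avoids the discrepancy computation for the blow-up of the ODP and the fiberwise triviality of $K_{X_0}$, and it makes explicit two points the paper leaves implicit, namely that $L_t$ extends to a line bundle on $\X_2$ whose restriction to $\bP_2$ is trivial (correct, since $\bP_2$ is contracted to the conifold point by the map to the unreduced family), which is exactly what legitimizes the bookkeeping $L_t\cdot\beta = L_0\cdot\beta'$, and that the surviving quadric factor is $\PT(\bP_2/H_2)_{0,\emptyset} = 1$. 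What the paper's route buys is brevity, and note that your input $c_1(X_t)\cdot\beta=0$ is the same implicit restriction to fiber classes that the paper's identity $c_1(Y)\cdot\beta_1 = -D_1\cdot\beta_1$ relies on, so neither route is more general. One shared looseness (not a gap specific to you): both you and the paper pass from $D_1\cdot\beta_1 = 0$ to $\beta_1 = f^{!}\beta'$, which strictly speaking uses effectivity/admissibility of the relative pairs to exclude classes of the form $f^{!}\beta' + a(l_1 - l_2)$ built from the two rulings $l_1, l_2$ of $D_1 \cong \P^1\times\P^1$.
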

\begin{proof}
By the agreement of the virtual dimensions, 
a non-zero term of the RHS of (\ref{deg:2}) satisfies
\begin{align*}
c_1(Y) \cdot \beta_1= -D_1 \cdot \beta_1 = \deg C_{\eta} \ge 0. 
\end{align*}
We also have the following compatibility condition
\begin{align*}
D_1 \cdot \beta_1= H_2 \cdot \beta_2 = \lvert \eta \rvert \ge 0. 
\end{align*}
The above inequalities imply that 
$D_1 \cdot \beta_1= \lvert \eta \rvert =0$
and $H_2 \cdot \beta_2=0$. 
The first equality implies that
$\beta_1$ is written as $f^{!}\beta'$ for some
$\beta' \in H_2(X_0)$. 
The second equality implies that
$\beta_2=0$, 
since $H_2$ is an ample divisor
in $\bP_2$.  
By rearranging the sum, we obtain the desired formula
for $k=1$. The case of $k>1$ is similarly discussed. 
\end{proof}

\subsection{Relation between $\PT(X)$ and $\PT(\tX)$}
Now we choose $k'$ generic fibers $S_1,\dots, S_{k'}$ of $X\to C$. 
By our assumption $S_i$ is a $K3$ surface. 
Let $X_i=S_i\times \P^1$. Then $X_i$ is a smooth $K3$-fibration over $\P^1$. 
We identify the surface $S_i$ with the divisor $S_i \times \{0\}$ in $X_i$. 

\begin{lem} \label{K3-vanishing}
For any curve class $\beta$
contained in fibers of $X_i \to \P^1$, we have 
$P_{n, \beta}(X_i/S_i)=0$.
\end{lem}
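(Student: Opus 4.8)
The plan is to prove the vanishing by deformation invariance of the relative stable pair invariants, degenerating the complex structure of the $K3$ fiber $S_i$ until the class $\beta$ is no longer realized by any curve, so that the relative moduli space becomes empty. I expect the only genuinely delicate point to be that this deformation must leave the projective category, forcing me to run the invariance argument in the analytic framework of Remark~\ref{rem:nonproj}.

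First I would fix the numerology. Since $D=S_i\times\{0\}$ is itself a fiber of the projection $X_i\to\P^1$ and $\beta$ is contained in the fibers, we have $D\cdot\beta=0$, which is consistent with the empty boundary condition $\lvert\eta\rvert=0$ implicit in the abbreviated notation $P_{n,\beta}(X_i/S_i)$; in particular the invariant carries no insertions, $P_{n,\beta}(X_i/S_i)=\int_{[\pP_n(X_i/S_i,\beta)]^{\rm vir}}1$. Because $X_i=S_i\times\P^1$ with $S_i$ a $K3$ surface, $c_1(X_i)=-K_{X_i}=2[S_i]$ is twice the fiber class, so $c_1(X_i)\cdot\beta=0$ and the virtual dimension is zero, as needed for the integral to be a number. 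Every $\CC$-point of $\pP_n(X_i/S_i,\beta)$ is a stable pair whose sheaf $\F$ is scheme-theoretically supported on a single fiber $S_i\times\{t\}\cong S_i$, disjoint from $D$ since $D\cdot\beta=0$; thus $\beta$ is the image $\iota_*\beta_S$ of a fixed class $\beta_S\in H_2(S_i,\Z)$.

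The key step is the deformation argument. I would move $S_i$ inside the period domain of marked $K3$ surfaces to a very general surface $S'$ for which $\beta_S$ fails to be of type $(1,1)$, i.e. $\langle\beta_S,\sigma'\rangle\neq 0$ for the holomorphic two-form $\sigma'$ of $S'$; for a very general $S'$ one may even arrange $\Pic(S')=0$. Since the period domain is connected and simply connected, $\beta_S$ remains a well-defined integral class along the deformation and $\beta=\iota_*\beta_S$ is the corresponding fiber class on each deformed threefold. Setting $X'=S'\times\P^1$ and $D'=S'\times\{0\}$, the pair $(X',D')$ is a deformation of $(X_i,S_i)$, and on $S'$ no pure $1$-dimensional coherent sheaf can have class $\beta_S$, since $\beta_S$ is not represented by any analytic curve. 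Hence $\pP_n(X'/S',\beta)=\emptyset$, so $P_{n,\beta}(X'/S')=0$, and by deformation invariance of the relative stable pair invariants we conclude $P_{n,\beta}(X_i/S_i)=P_{n,\beta}(X'/S')=0$.

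The hard part will be that a very general $S'$ is non-projective, so the deformation crosses the projective/non-projective boundary; the invariance must therefore be taken in the analytic setting of Remark~\ref{rem:nonproj}, where $\pP_n(X'/S',\beta)$ is built as a compact analytic space using a fiberwise quasi-polarization and carries a virtual class of the expected dimension. The delicate verification is that the relative (expanded degeneration) construction is compatible with this quasi-polarized analytic moduli space and that the resulting invariant is genuinely constant along a path crossing that boundary; once this is granted, emptiness of the moduli space at the general member forces the vanishing. As an alternative that stays within a fixed $S_i$, one could instead produce a cosection of the obstruction theory from the holomorphic symplectic form $p_1^{*}\sigma$ on $X_i=S_i\times\P^1$, whose restriction to each $K3$ fiber is nondegenerate, and invoke cosection localization; I expect the deformation argument to be cleaner, as it avoids analyzing the degeneracy locus of this cosection in the $\P^1$-direction.
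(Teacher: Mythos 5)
Your primary route (deformation invariance) is genuinely different from the paper's, and it has a real gap exactly at the point you defer. The paper's one-line proof points to the discussions of reduced classes in its references: on a $K3$-fibered geometry the ordinary stable-pairs obstruction theory carries a trivial factor $H^{0,2}(S)$ coming from the fiberwise holomorphic symplectic form (equivalently, a surjective cosection), and this forces the non-reduced virtual class of any fiber curve class to vanish --- this is precisely why the reduced cycle $[\pP_n(S,\beta)]^{red}$ has to be introduced elsewhere in the paper. Your deformation argument, by contrast, must handle the hard case where $\Pic(S_i)$ is generated by a class proportional to $\beta_S$ (e.g.\ Picard rank one with $\beta_S$ ample): then \emph{every} polarization is proportional to $\beta_S$, so any deformation that keeps $S'$ projective keeps $\beta_S$ of type $(1,1)$, and you are forced through non-projective $K3$'s. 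At that point you need deformation invariance of the \emph{relative} (Li--Wu, expanded-degeneration) stable pair invariants across the projective/non-projective boundary. Remark~\ref{rem:nonproj} does not supply this: it constructs only the absolute moduli $\pP_n(X,\beta)$ for quasi-polarized fibrations over a curve, not relative moduli with bubbling, and your strengthening ``one may even arrange $\Pic(S')=0$'' is actually inconsistent with that framework, since a quasi-polarization requires a line bundle with positive square on the fiber. So the step you label a ``delicate verification'' is not a verification at all --- it is the entire mathematical content, and it is missing.

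Ironically, the alternative you raise and then set aside is the paper's intended argument, and your reason for discarding it is unfounded. For a fiber class $\beta$, every stable pair in $\pP_n(X_i/S_i,\beta)$ (including those on expanded degenerations, whose bubbles are $\P(\oO_{S_i}\oplus\oO_{S_i})\cong S_i\times \P^1$, again a trivial $K3$ fibration) has its sheaf supported inside a single smooth $K3$ fiber, on which $p_1^{*}\sigma$ restricts to the nondegenerate symplectic form; the induced cosection of the obstruction sheaf is therefore surjective at \emph{every} point of the moduli space, the degeneracy locus is empty, and cosection localization gives $[\pP_n(X_i/S_i,\beta)]^{\rm{vir}}=0$ outright. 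There is no issue ``in the $\P^1$-direction'' to analyze, because the cosection is built fiberwise. If you rewrite the proof around this cosection (or, equivalently, around the trivial quotient $H^{0,2}(S_i)$ of the obstruction theory), you recover the paper's argument and avoid the analytic deformation-invariance machinery entirely.
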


\begin{proof}
By (21) in \cite{MPT} the cup product map, $\cup \beta: H^1(T_{S_i})\to H^2(\O_{S_i})$ is surjective, and hence by the proof of \cite[Theorem 2.7]{a112} the obstruction theory of $\pP_n(X_i/S_i,\beta)$ contains a trivial factor which implies the vanishing of the invariants\footnote{The proof of \cite[Theorem 2.7]{a112} is given for absolute geometries but a parallel argument applies to the relative geometry here.}.
\end{proof}
 
We set $S$ to be the disjoint union of 
$S_i$ for $1\le i\le k'$. 
\begin{lem} \label{lem:rel=abs}
$P_{n, \beta}(X/S)=P_{n, \beta}(X)$.
\end{lem}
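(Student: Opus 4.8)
The plan is to realize the relative geometry $X/S$ by degenerating $X$ to the normal cone of $S=\coprod_i S_i$ and then to strip off the resulting bubble using Lemma \ref{K3-vanishing}. Since each $S_i=\pi^{-1}(c_i)$ is a fiber, its normal bundle is $N_{S_i/X}\cong\pi^*N_{c_i/C}\cong\oO_{S_i}$, so the projective completion of $N_{S/X}$ is the \emph{trivial} $\P^1$-bundle $\P(\oO\oplus\oO)\cong S\times\P^1$. Hence the family $M=\Bl_{S\times\{0\}}(X\times\CC)\to\CC$ has generic fiber $X$ and central fiber $M_0=X\cup_{S_0}(S\times\P^1)$, glued along the zero section $S_0=S\times\{0\}$, with the bubble $S\times\P^1$ taken relative to $S_0$ --- exactly the geometry of Lemma \ref{K3-vanishing}. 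By deformation invariance of the stable pair invariants, $\PT(X)_\beta=\PT(M_0)_\beta$.

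First I would apply the degeneration formula in the form \eqref{deg:1} to $M_0$, writing
\[
\PT(X)_\beta=\sum_{\substack{\beta_1+\beta_2=\beta\\ \eta}}\PT(X/S)_{\beta_1,\eta}\,\frac{(-1)^{|\eta|-l(\eta)}a(\eta)}{q^{|\eta|}}\,\PT\big((S\times\P^1)/S_0\big)_{\beta_2,\eta^\vee},
\]
subject to the matching condition $|\eta|=\beta_1\cdot S_0=\beta_2\cdot S_0$. The key step is to show that only the trivial splitting $\beta_2=0$ contributes. Tracking the divisor $S$ through the degeneration, the proper transform of $S\times\CC$ meets $M_0$ along the section at infinity $S_\infty\subset S\times\P^1$, so conservation of intersection numbers across the degeneration gives
\[
0=\beta\cdot S=\beta_2\cdot S_\infty=\beta_2\cdot S_0=|\eta|,
\]
where the second equality uses that $\beta_1$ lies in the $X$-component (disjoint from $S_\infty$), and the third that the two sections of the trivial bundle are homologous. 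Thus $|\eta|=0$, forcing $\eta=\emptyset$ and $\beta_2\cdot S_0=0$; the latter means $\beta_2$ carries no $\P^1$-fiber component, i.e. $\beta_2$ is a fiberwise class of $S\times\P^1\to\P^1$. Lemma \ref{K3-vanishing} then annihilates every term with $\beta_2\neq0$, while the $\beta_2=0$ bubble factor is the empty pair and equals $1$ by the standard normalization. The surviving term is $\PT(X/S)_\beta$, which gives $P_{n,\beta}(X/S)=P_{n,\beta}(X)$ after reading off the coefficient of $q^n$.

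The main obstacle is the intersection-theoretic bookkeeping that produces $|\eta|=0$: one must see that $\beta\in\ker\pi_*$ forces the bubble classes to be genuinely fiberwise, for otherwise $\beta_2$ could a priori carry a $\P^1$-direction on which Lemma \ref{K3-vanishing} says nothing. The remaining ingredients --- triviality of $N_{S/X}$, the fact that blowing up a smooth center keeps the degeneration projective so that no quasi-polarization subtleties of Remark \ref{rem:nonproj} intervene, and the normalization of the $\beta_2=0$ factor --- are routine, and since $S=\coprod_i S_i$ is a disjoint union of fibers the argument applies componentwise.
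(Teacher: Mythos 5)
Your proposal is correct and takes essentially the same route as the paper: the paper's (two-line) proof is precisely to degenerate $X$ to the normal cone of $S=\coprod_i S_i$, whose central fiber is $X\cup_{S_i}\coprod_i (S_i\times\P^1)$ because the normal bundles are trivial, and then to apply the degeneration formula together with Lemma \ref{K3-vanishing} to kill all bubble contributions. Your write-up simply makes explicit the bookkeeping the paper leaves implicit, namely that $\beta\in\ker\pi_*$ forces $\lvert\eta\rvert=\beta_2\cdot S_0=0$, so the bubble classes are genuinely fiberwise and the vanishing lemma applies, with the $\beta_2=0$ term normalized to $1$.
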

\begin{proof}
Use the degeneration formula for the degeneration of $X$ into $$X \bigcup_{S_1,\dots, S_r} \coprod_i X_i.$$ The
 vanishing of $P_{n, \beta}(X_i/S_i)$ from Lemma \ref{K3-vanishing} proves the claim. 
\end{proof}

We use Lemma \ref{lem:rel=abs} to relate the PT invariants of $X$ to $X_t$.  To achieve this we use the degeneration of $X_t$ obtained by degenerating its base $\tC$ to two copies of $\C$ by attaching two copies of $X$ along the generic fibers $S_1,\dots,S_{k'}$.  The degeneration formula then implies 
that
\begin{align}\label{deg:3}
\sum_{L_t \cdot \beta=d}
\PT(X_t)_{\beta}=
\sum_{d_1 + d_2=d} 
\sum_{L \cdot \beta_1 =d_1} \PT(X/S)_{\beta_1} \sum_{L \cdot \beta_2=d_2}
\PT(X/S)_{\beta_2}
\end{align}
for each $d \in \mathbb{Z}_{>0}$. 
Let $\PT^{h}(\tX)$ be the generating 
series defined by 
\begin{align} \label{equ:contracted}
\PT^{h}(\tX) &\cneq 
\sum_{h_{\ast}\beta=0} P_{n, \beta}(\tX) q^n t^{\beta} \\ \notag
&=\prod_{1\le i\le k, n\ge 1} (1-(-q)^n t^{[e_i]})^n. 
\end{align}
Here the second identity follows from the 
computation of stable pair invariants on 
a $(-1, -1)$-curve~\cite{NN}. 
Let $\widetilde{\epsilon} \colon \tX \to X$ be the natural 
morphism
given in the diagram (\ref{Cartesian}). 
Combined with the results in the previous subsection, we obtain the 
following result:
\begin{thm}\label{thm:formula:relation}
We have the formula
\begin{align}\label{formula:relation}
\widetilde{\epsilon}_{\ast} \frac{\PT(\tX)}{\PT^{h}(\tX)}
= \PT(X)^2. 
\end{align}
Here $\widetilde{\epsilon}_{\ast}$
is the variable change $t^{\beta} \mapsto 
t^{\widetilde{\epsilon}_{\ast}\beta}$. 
\end{thm}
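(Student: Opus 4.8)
The strategy is to treat the relative geometry $Y/D$ as a common hub. Both threefolds $\tX$ and $X_t$ degenerate to configurations sharing the same piece $Y$ (through $\X_1$ and $\X_2$ respectively), so the relative series $\PT(Y/D)$ can be matched against $\PT(\tX)$ on one side and against $\PT(X_t)$, hence against $\PT(X)^2$, on the other. I would carry this out in three linked steps and then reconcile the curve-class bookkeeping.

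First I would turn Proposition \ref{prop:deg1} into a factorization of generating series. Substituting its expression for $\PT(\tX)_{\beta}$ into $\PT(\tX)=\sum_{\beta}\PT(\tX)_{\beta}t^{\beta}$ and reindexing by the independent pair $(\beta_1,\beta_2)$, so that $t^{\beta}=t^{h^{!}\beta_1}t^{\beta_2}$, the two summation variables separate and give
\begin{equation*}
\PT(\tX)=\Big(\sum_{\beta_1\in H_2(X_0)}\PT(Y/D)_{f^{!}\beta_1}\,t^{h^{!}\beta_1}\Big)\cdot\Big(\sum_{h_{\ast}\beta_2=0}\PT(\tX)_{\beta_2}\,t^{\beta_2}\Big).
\end{equation*}
By \eqref{equ:contracted} the second factor is exactly $\PT^{h}(\tX)$, so dividing gives $\PT(\tX)/\PT^{h}(\tX)=\sum_{\beta_1}\PT(Y/D)_{f^{!}\beta_1}t^{h^{!}\beta_1}$. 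Writing $\widetilde{\epsilon}=p\circ h$ with $p\colon X_0\to X$ the degree-two projection (the base change of $\epsilon$), and using $h_{\ast}h^{!}=\id$ on fiberwise classes, the variable change $\widetilde{\epsilon}_{\ast}$ sends $t^{h^{!}\beta_1}$ to $t^{p_{\ast}\beta_1}$, whence
\begin{equation*}
\widetilde{\epsilon}_{\ast}\frac{\PT(\tX)}{\PT^{h}(\tX)}=\sum_{\beta_1\in H_2(X_0)}\PT(Y/D)_{f^{!}\beta_1}\,t^{p_{\ast}\beta_1}.
\end{equation*}

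Next I would identify this last series with $\PT(X)^2$ using the family $X_t$. Grading by $L$-degree and using $L_0=p^{\ast}L$ with the projection formula $L\cdot p_{\ast}\beta_1=L_0\cdot\beta_1$, the coefficient of degree $d$ on the right is $\sum_{L_0\cdot\beta_1=d}\PT(Y/D)_{f^{!}\beta_1}$, which by Proposition \ref{prop:deg2} equals $\sum_{L_t\cdot\beta=d}\PT(X_t)_{\beta}$. Degenerating instead the base $\tC_t$ of $X_t$ into two copies of $C$ degenerates $X_t$ into two copies of $X$ glued along the generic fibers $S=\coprod_i S_i$; the associated degeneration formula is \eqref{deg:3}, and after replacing the relative series by absolute ones through Lemma \ref{lem:rel=abs} its degree-$d$ part is exactly the degree-$d$ part of $\PT(X)^2$. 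Reading the three equalities degree by degree yields $\widetilde{\epsilon}_{\ast}\PT(\tX)/\PT^{h}(\tX)=\PT(X)^2$.

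The main obstacle is the bookkeeping of curve classes across the three geometries $\tX$, $X_0$ and $X_t$, together with the upgrade from the $L$-graded identities to the claimed equality of full generating series. One must verify that $f^{!}$, $h^{!}$, $p_{\ast}$ and $\widetilde{\epsilon}_{\ast}$ are mutually compatible on fiberwise homology, that $\PT^{h}(\tX)$ isolates precisely the $h$-contracted exceptional contributions (so that the quotient is a genuine power series), and that in \eqref{deg:3} every fiberwise class on $X_t$ distributes uniquely onto the two copies of $X$ with no surviving relative corrections along $S$ — the latter being exactly what Lemma \ref{K3-vanishing} and Lemma \ref{lem:rel=abs} guarantee. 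Since the degeneration formula for stable pairs is class-refined, the $L$-grading serves only to organize finite sums, and once these compatibilities are checked the three displayed matchings assemble into the stated formula.
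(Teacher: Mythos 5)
Your proposal is correct and follows essentially the same route as the paper, combining Proposition \ref{prop:deg1}, Proposition \ref{prop:deg2}, Lemma \ref{lem:rel=abs} and \eqref{deg:3}, only in the opposite order: you first identify $\PT(\tX)/\PT^{h}(\tX)$ with the $Y/D$ series via Proposition \ref{prop:deg1} and then match it with $\PT(X)^2$, whereas the paper does the matching with $\PT(X)^2$ first and invokes Proposition \ref{prop:deg1} at the end. The paper handles the upgrade from the $L$-graded identities to the class-refined formula by noting that the degree-$d$ identity holds for every ample $L$, which is the cleaner justification of the final assembly step you describe.
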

\begin{proof}
For each $d\in \mathbb{Z}_{>0}$, 
Proposition~\ref{prop:deg2}, Lemma~\ref{lem:rel=abs} and (\ref{deg:3}) imply
\begin{align*}
\sum_{\begin{subarray}{c}
\beta \in H_2(X_0) \\ L \cdot \epsilon_{\ast} \beta =d
\end{subarray}}
\PT(Y/D)_{f^{!}\beta}
= \sum_{\begin{subarray}{c}
\beta_1, \beta_2 \in H_2(X) \\
L \cdot \beta_1 + L \cdot \beta_2 =d
\end{subarray}}
\PT(X)_{\beta_1} \cdot \PT(X)_{\beta_2}. 
\end{align*}
Since the above formula holds for any $L$, we have 
\begin{align}\label{deg:4}
\widetilde{\epsilon}_{\ast}
\sum_{\beta \in H_2(X_0)}
\PT(Y/D)_{f^{!}\beta} t^{h^{!}\beta}
= \PT(X)^2. 
\end{align}
On the other hand, the LHS of (\ref{deg:4})
coincides with the LHS of (\ref{formula:relation}) by 
Proposition~\ref{prop:deg1}.
Therefore we obtain the result.   
\end{proof}
\begin{rmk}
The variable change $t^{\beta} \mapsto 
t^{\widetilde{\epsilon}_{\ast}\beta}$
only makes sense after taking the quotient series in 
the LHS of (\ref{formula:relation}). 
Otherwise there are infinite numbers of contributions 
to the coefficients. 
\end{rmk}

\begin{rmk}
The LHS of (\ref{formula:relation}) is 
known to be independent of a choice of a small 
resolution~\cite{Hu-Li},~\cite{Tcurve2},~\cite{Cala}. 
The formula (\ref{formula:relation}) is consistent with 
this fact. 
\end{rmk}
In an irreducible curve class case, 
we have the following corollary: 
\begin{cor}\label{cor:irred}
For an irreducible curve class $\beta \in H_2(X, \mathbb{Z})^{\pi}$, 
we have the following formula: 
\begin{align}\label{irred:formula}
\sum_{\widetilde{\epsilon}_{\ast}\widetilde{\beta}=\beta}
\widetilde{\epsilon}_{\ast}
\frac{\sum_{h_{\ast}e=0}
\PT(\widetilde{X})_{\widetilde{\beta}+e}}{\PT^h(\widetilde{X})}
=2 \PT(X)_{\beta}. 
\end{align}
Here $\widetilde{\beta} \in H_2(\widetilde{X}, \mathbb{Z})$
are irreducible curve classes, 
giving lifts of $\beta$. 
\end{cor}

\subsection{Conjectural relation to perverse (non-commutative) stable pair theory} \label{sec:noncomm}
In this subsection, 
we introduce perverse (or non-commutative) stable 
pair theory, and propose a conjectural 
relationship between the quotient 
series $\PT(\tX)/\PT^h(\tX)$ in Theorem~\ref{thm:formula:relation}
and the generating series of the perverse (non-commutative)
stable pair invariants. 
We first recall the heart of the perverse t-structure 
on $D^b \Coh(\tX)$
associated to the small resolution 
$h \colon \tX \to X_0$, 
introduced by Bridgeland~\cite{Br1}. 
Let $\cC \subset D^b \Coh(\widetilde{X})$ be the full 
subcategory defined by 
\begin{align*}
\cC = \{ E \in D^b \Coh(\tX) : \dR h_{\ast} E =0\}. 
\end{align*}
By~\cite{Br1}, the standard t-structure on $D^b \Coh(\tX)$
induces a t-structure $(\cC^{\le 0}, \cC^{\ge 0})$
on $\cC$. 
We define $\Per(\tX/X_0)$ to be 
\begin{align*}
\Per(\tX/X_0) = \left\{ E \in D^b \Coh(\tX) :
\begin{array}{c} 
\dR h_{\ast} E \in \Coh(X_0), \\
\Hom(\cC^{<0}, E)= \Hom(E, \cC^{>0})=0 
\end{array}
\right\}. 
\end{align*}
The category $\Per(\tX/X_0)$ is the heart 
of a bounded t-structure on $D^b \Coh(\tX)$, hence it is 
an abelian category. 
Note that $\oO_X \in \Per(\tX/X_0)$ by definition. 
Let $\Per_{\le 1}(\tX/X_0)$, 
$\Per_0(\tX/X_0)$ be the subcategories of 
$\Per(\tX/X_0)$, defined by 
\begin{align*}
&\Per_{\le 1}(\tX/X_0)
= \{ E \in \Per(\tX/X_0): 
\dim h(\Supp(E)) \le 1 \}, \\
&\Per_0(\tX/X_0)
= \{ E \in \Per(\tX/X_0): 
\dim h(\Supp(E))=0\}. 
\end{align*}
It is easy to see that 
$\Per_{\le 1}(\tX/X_0)$ and 
$\Per_0(\tX/X_0)$ are
closed under quotients and subobjects in 
$\Per(\tX/X_0)$. In particular, they
are abelian subcategories of $\Per(\tX/X_0)$. 
\begin{defi}\label{defi:pstable}
A perverse stable pair for $h \colon \tX \to X_0$
consists of data
$(F, s)$
\begin{align}\label{pstable}
F \in \Per_{\le 1}(\tX/X_0), \quad  s \colon \oO_{\tX} \to F
\end{align}
satisfying the following conditions: 
\begin{itemize}
\item $\Hom(\Per_0(\tX/X_0), F)=0$. 
\item The cokernel of $s$ in $\Per(\tX/X_0)$
is an object in $\Per_0(\tX/X_0)$. 
\end{itemize}
\end{defi}
The above definition coincides with the usual definition 
of stable pairs if $h \colon \tX \to X_0$ is an isomorphism
(i.e. there is no nodal fiber for $\pi \colon X \to C$). 
Also the above perverse stable pairs are related to 
the non-commutative version of stable pairs as follows. 
Suppose that $\tX$ were a projective variety. 
Then, Van den Bergh~\cite{MVB} shows that there is a vector bundle 
$\eE_0$ on $\tX$ such that 
$\eE=\oO_{\tX} \oplus \eE_0$ gives a derived equivalence
\begin{align*}
\Phi= \dR h_{\ast} \dR \hH om(\eE, \ast) 
 \colon D^b \Coh(\tX) \stackrel{\sim}{\to}
D^b \Coh(\aA_{X_0})
\end{align*}
where $\aA_{X_0}=h_{\ast} \eE nd(\eE)$ is the sheaf of non-commutative
algebras on $X_0$. 
The equivalence $\Phi$ restricts to an equivalence between 
$\Per(\tX/X_0)$ and $\Coh(\aA_{X_0})$. 
Let $\uU=\Phi(\oO_{\tX}) \in \Coh(\aA_{X_0})$ 
be the local projective generator of $\aA_{X_0}$. 
Let 
\begin{align*}
\Coh_{\le 1}(\aA_{X_0}), \quad 
\Coh_0(\aA_{X_0})
\end{align*}
be the subcategories of $\Coh(\aA_{X_0})$, 
consisting of $E \in \Coh(\aA_{X_0})$
whose support as $\oO_{X_0}$-module 
has dimension $\le 1$, $0$ respectively. 
It is easy to see that
\begin{align*}
&\Phi(\Per_{\le 1}(\tX/X_0)) = \Coh_{\le 1}(\aA_{X_0}), \\
& \Phi(\Per_0(\tX/X_0)) = \Coh_0(\aA_{X_0}). 
\end{align*}
Under the equivalence $\Phi$, the data (\ref{pstable}) is equivalent to 
the data $(F', s')$
\begin{align}\label{npair}
F' \in \Coh_{\le 1}(\aA_{X_0}), \quad s' \colon \uU \to F'
\end{align}
such that 
$\Hom(\Coh_0(\aA_{X_0}), F')=0$ and 
the cokernel of $s$ in $\Coh(\aA_{X_0})$ is 
an object in $\Coh_0(\aA_{X_0})$. 

The pair (\ref{npair}) is a non-commutative analogue of 
stable pairs. However unfortunately, our 3-fold $\tX$ 
may 
not be projective, and the 
sheaf of non-commutative algebras $\aA_{X_0}$ only exists at 
formal neighborhoods at each point in $X_0$. 
Because of the absence of the global 
sheaf of non-commutative algebras $\aA_{X_0}$, we 
formulate our non-commutative stable pairs as
a morphism in the perverse heart as in (\ref{pstable}). 

For $\beta \in H_2(\tX, \mathbb{Z})^{\widetilde{\pi}}$ 
and $n\in \mathbb{Z}$, let
\begin{align}\label{moduli}
\pP_n^{\rm{per}}(\tX, \beta)
\end{align}
be the moduli space of perverse stable pairs 
$(\oO_{\tX} \to F)$ with $[F]=\beta$ and $\chi(F)=n$. 
We expect
 that (see~Remark~\ref{rmk:permoduli})
the moduli space (\ref{moduli}) 
exists as a proper algebraic space, 
with a perfect obstruction theory with virtual 
dimension zero given by 
\begin{align}\label{per-PT}
\eE^{\bullet}_{\rm{per}}:= \dR \hH om_{\pi_{\pP}}
(\Ibb_{\rm{per}}, \Ibb_{\rm{per}}\otimes \omega_{\pi})_{0}[2]
\to \mathbb{L}_{\pP_n^{\rm{per}}(\tX, \beta)}^{\bullet}.
\end{align}
Here $\Ibb_{\rm{per}}$ is the total complex associated to 
the universal morphism 
\begin{align*}
s \colon \oO_{\tX\times \pP^{\rm{per}}_{n}(\tX,\beta)} \to
 \mathbb{F}_{\rm{per}}.
\end{align*}

\begin{rmk}\label{rmk:permoduli}
Here we assumed that (\ref{moduli})
is a proper algebraic space with perfect obstruction theory (\ref{per-PT}). 
We have not checked all the details but we believe that they should all follow from the 
similar arguments in the existing literatures. 
First the argument in~\cite[Section~6.4, Step~2]{Tcurve2}
should show that (\ref{moduli}) is an algebraic space of finite type.
The properness should follow from the arguments in~\cite{JLo2}.  
Also the existence of the perfect 
obstruction theory (\ref{per-PT}) should follow from
the argument in~\cite{a10} by noting that the complexes associated to perverse
stable pairs in Definition~\ref{defi:pstable} are simple objects of the derived category as they are stable with respect to 
some weak stability condition (see Remark \ref{conjequal}).
\end{rmk}

By taking the integration, we obtain the invariant
\begin{align*}
P_{n, \beta}^{\rm{per}} = 
\int_{[\pP_n^{\rm{per}}(\tX, \beta)]^{\rm{\rm{vir}}}} 1. 
\end{align*}
We define the generating series 
$\PT^{\rm{per}}(\tX)_{\beta}$ and
$\PT^{\rm{per}}(\tX)$ to be
\begin{align*}
\PT^{\rm{per}}(\tX)_{\beta}=
\sum_{n \in \mathbb{Z}} P_{n, \beta}^{\rm{per}}q^n, \ 
\PT^{\rm{per}}(\tX)=
\sum_{\beta \in H_2(\tX, \mathbb{Z})^{\tpi}} 
\PT(\tX)_{\beta}t^{\beta}.  
\end{align*}

\begin{conj}\label{conj:PT=perv}
We have the equality
\begin{align}\label{PT=perv}
\frac{\PT(\tX)}{\PT^h(\tX)}
= \PT^{\rm{per}}(\tX). 
\end{align}
\end{conj}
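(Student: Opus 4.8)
The plan is to realize both generating series as counting invariants of objects in $D^b\Coh(\tX)$ for two different weak stability conditions, and to connect them by wall-crossing whose net effect is exactly division by the contracted factor $\PT^h(\tX)$. Following the curve-counting framework of the third author, ordinary stable pairs $(\oO_{\tX}\to \F)$ with $\F\in\Coh_{\le 1}(\tX)$ are the stable objects, for a limiting weak stability condition, in the tilted heart $\langle \oO_{\tX}[1],\Coh_{\le 1}(\tX)\rangle$, while perverse stable pairs as in \eqref{pstable} are the analogous objects in $\langle \oO_{\tX}[1],\Per_{\le 1}(\tX/X_0)\rangle$ (equivalently, under $\Phi$, stable pairs $\uU\to \F'$ in $\Coh_{\le 1}(\aA_{X_0})$). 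Since $\Per(\tX/X_0)$ is a Bridgeland tilt of $\Coh(\tX)$ supported at the exceptional locus, these two hearts lie in the same connected component of the space of weak stability conditions, and the first step is to construct an explicit path $\sigma_t$ joining the corresponding stability conditions.

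Next I would analyze the walls crossed along $\sigma_t$. Because the two hearts differ only in how objects supported on the exceptional curves $e_1,\dots,e_k$ are treated, every destabilizing sub/quotient must be supported on $\bigcup_i e_i$; the relevant objects are the shifts of $\oO_{e_i}(-1)$, which generate the subcategory $\cC=\{E:\dR h_\ast E=0\}$ over each node. Each $e_i$ is a $(-1,-1)$-curve, so its local model is the resolved conifold, whose genus-zero stable-pair contribution is the explicit product $\prod_{n\ge 1}(1-(-q)^n t^{[e_i]})^n$. I would then apply the Hall-algebra wall-crossing formula of Joyce--Song and Toda (in the Behrend-weighted setting) to show that the total contribution of all these walls is precisely $\PT^h(\tX)$ as in \eqref{equ:contracted}, so that passing from $\PT^{\rm per}(\tX)$ to $\PT(\tX)$ multiplies the series by this factor, which is the claimed identity \eqref{PT=perv}.

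The main obstacle is that $\tX$ is in general non-projective, so the moduli spaces exist only as analytic spaces and the non-commutative algebra $\aA_{X_0}$ is defined only in formal neighborhoods of the nodes; consequently the integration map on the motivic Hall algebra and the Behrend-function identities underlying the wall-crossing formula are not directly available. I would address this by \emph{localizing} the entire wall-crossing near the exceptional fibers, where every relevant object is compactly supported inside a $\pi$-fiber and the local geometry is the resolved conifold, running the wall-crossing fiberwise over the analytic charts $U_j\subset C$ of Remark~\ref{rem:nonproj} and then patching. A prerequisite difficulty, which must be settled first, is establishing the expected perfect obstruction theory \eqref{per-PT} and the properness of $\pP_n^{\rm per}(\tX,\beta)$ so that $P_{n,\beta}^{\rm per}$ is well-defined; I expect this to follow from the deformation theory of objects in the perverse heart together with the analytic gluing construction of Remark~\ref{rem:nonproj}.
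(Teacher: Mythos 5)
First, a point of comparison: the paper does not prove this statement at all --- it is stated as Conjecture~\ref{conj:PT=perv}, and the remark following it explains precisely why a proof is out of reach. Your proposal follows the very route the paper sketches as motivation (wall-crossing between the ordinary and perverse pair hearts in the style of \cite{Tcurve2}, with the walls governed by the objects $\oO_{e_i}(-1)$ and their shifts), so you have correctly identified the natural strategy; what the proposal does not do is overcome the obstacles that make the statement a conjecture rather than a theorem.

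The genuine gap is the step ``apply the Hall-algebra wall-crossing formula of Joyce--Song and Toda in the Behrend-weighted setting.'' That machinery is established only for projective Calabi--Yau threefolds: both the Behrend-function identities underlying the integration map of \cite{JS} and the identification of virtual counts with $\nu$-weighted Euler characteristics (Behrend's theorem for symmetric obstruction theories) use Serre duality with $\omega\cong\oO$. Here $\tX$ is neither projective nor Calabi--Yau ($K_{\tX}$ is pulled back from $\tC$ and nontrivial), and the invariants on both sides of \eqref{PT=perv} are defined as degrees of virtual classes of \emph{non-symmetric} obstruction theories (Theorem~\ref{PT} and \eqref{per-PT}). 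In fact the paper's own Theorem~\ref{thm:main formula} shows that for such fibrations $P_{n,\beta}$ involves Noether--Lefschetz numbers and the class $c_1(\rho^{*}\K^{\vee})$, so these invariants are demonstrably \emph{not} weighted Euler characteristics of the moduli spaces; hence even a successful Hall-algebra computation of Behrend-weighted counts would not prove \eqref{PT=perv}. Your proposed fix --- localizing the wall-crossing over the analytic charts of Remark~\ref{rem:nonproj} --- works for ordinary Euler characteristics, because constructible/motivic quantities cut and glue; this is exactly why the Euler-characteristic version of \eqref{PT=perv} does hold via \cite[Theorem~5.8]{Tcurve2}, as the paper notes. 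But virtual classes do not glue this way, so localization does not remove the Calabi--Yau obstruction. Two further ingredients are also missing: even in the projective Calabi--Yau case the argument requires the critical-locus description of moduli of complexes (Conjecture~\ref{conj:critical}, i.e.\ \cite[Conjecture~1.2]{Tcurve2}, alternatively the methods of \cite{Cala}), which you neither assume nor supply; and the existence of $\pP_n^{\rm{per}}(\tX,\beta)$ as a proper analytic space with the obstruction theory \eqref{per-PT} --- which you correctly flag as a prerequisite --- is itself only expected, so the right-hand side of \eqref{PT=perv} is not yet even well-defined unconditionally. In short, the proposal is a faithful elaboration of the paper's heuristic, but the wall-crossing step it rests on is precisely the ``new technique'' the paper says is needed and does not have.
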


\begin{rmk} \label{conjequal}
The conjectural equality (\ref{PT=perv}) is motivated by
the third author's 
work~\cite{Tcurve2}. 
Indeed the equality (\ref{PT=perv})
holds true for the Euler characteristic version, i.e.
replace all the invariants by the topological Euler characteristic 
of the moduli spaces. 
This follows immediately from the proof of~\cite[Theorem~5.8]{Tcurve2}, 
and the fact that $\sigma_{\xi(0)}$-semistable objects
in the notation of~\cite[Theorem~5.8]{Tcurve2}
consist of total complexes associated to perverse
stable pairs in Definition~\ref{defi:pstable}.
(The latter fact can be easily proved using the same 
argument of~\cite[Proposition~5.5]{Tcurve2}).
Moreover, if our 3-fold $\tX$ were projective 
Calabi-Yau 3-fold, then the equality (\ref{PT=perv}) 
follows from the results of~\cite{Tcurve2}, \cite{Cala}. 
However, since our 3-fold $\tX$ is no longer 
Calabi-Yau, we need a new technique of wall-crossing to 
show the inequality (\ref{PT=perv}). 
\end{rmk}

\begin{rmk}
If Conjecture~\ref{conj:PT=perv} is true, 
then Theorem~\ref{thm:formula:relation}
implies that
\begin{align}\label{PT=perv2}
\widetilde{\epsilon}_{\ast} \PT^{\rm{per}}(\tX)=\PT(X)^2. 
\end{align}
Hence we can reduce the computation of $\PT(X)$
for a nodal $K3$ fibration $X \to C$
to the computation of $\PT^{\rm{per}}(\tX)$
for a smooth $K3$ fibration $\tX \to \tC$. 
\end{rmk}

\subsection{Computation of perverse stable pair invariants}
Finally in this section, we aim at
computing perverse stable pair invariants. 
We prove the following lemma: 
\begin{lem}\label{stable-perv}
Let $(\oO_{\widetilde{X}} \to F)$ be a perverse stable pair 
for $\tX \to X_0$
such that $h_{\ast}[F]$ is an irreducible curve class in $X_0$. 
Then we have
\begin{enumerate}
\item $\Hom(F, F)=\mathbb{C}$.
\item $\operatorname{Supp}(F) \subset S$ where $i:S\hookrightarrow \tX$ is a K3 fiber and $\operatorname{Supp}(F)$ is the scheme theoretic support of the complex $F$. 
\end{enumerate}  
\end{lem}
\begin{proof} (1)
It is enough to show that 
any non-zero morphism 
$u \colon F \to F$ in $\Per(\tX/X_0)$
is an isomorphism. 
Let us take a factorization in $\Per(\tX/X_0)$
\begin{align*}
F \twoheadrightarrow \Imm u \hookrightarrow F. 
\end{align*}
Let $U=\tX \setminus \mathrm{Ex}(h)$.
Suppose first that $u|_{U} =0$. Then 
$\imm u$ is a non-zero object in 
$\Per_0(\tX/X_0)$, which contradicts to 
Definition~\ref{defi:pstable}.
Hence we have $u|_{U} \neq 0$.  
Since the object $F|_{U}$ is a pure one dimensional 
sheaf whose scheme theoretic support is
irreducible, the morphism $u|_{U}$ is an injective 
morphism in $\Coh(U)$. This implies that, 
if $\Ker(u) \in \Per(\tX/X_0)$
is non-zero, then it is an object in $\Per_0(\tX/X_0)$. 
This contradicts to Definition~\ref{defi:pstable}, 
hence $\Ker(u)=0$. 

Now we have proved that $u$ is injective in $\Per(\tX/X_0)$. 
It remains to show that $\Cok(u) \in \Per(\tX/X_0)$
vanishes. 
Since $\dR h_{\ast}$ takes 
$\Per(\tX/X_0)$ to $\Coh(X_0)$, we have the 
exact sequence in $\Coh(X_0)$
\begin{align*}
0 \to \dR h_{\ast} F \to \dR h_{\ast} F \to \dR h_{\ast} \Cok(u) \to 0. 
\end{align*}
The above sequence implies that $\dR h_{\ast} \Cok(u)=0$, 
hence $\Cok(u) \in \cC$.
It follows that, by~\cite[Proposition~3.5.8]{MVB},
the object $\Cok(u)$ is isomorphic to the direct sum of 
the sheaves
$\oO_{e_i}(-1)$.
Therefore 
by taking the cohomology long exact sequence 
associated to $0 \to F \to F \to \Cok(u) \to 0$, and noting that $\hH^i(F)=0$ for $i\neq 0,-1$ (because $F\in \Per(\tX/X_0)$),
we obtain the exact sequence of sheaves 
\begin{align*}
0 \to \hH^0(F) \to \hH^0(F) \to \Cok(u) \to 0. 
\end{align*}  
Therefore we have $\Cok(u)=0$. 

(2) By part (1), $F$ is necessarily supported (set theoretically) on only one fiber $S$. Tensoring the natural short exact sequence $0\to \O_{\tX}(-S)\to \O_{\tX}\to i_*\O_S\to 0$ by $F$, and noting that $F\otimes \O_{\tX}(-S)\cong F$, we get the exact triangle $F\to F\to  i_*\dL i^*F$. Since $\dL i^*F\not \cong 0$, the first map in the exact triangle cannot be an isomorphism, and hence it must be zero map by part (1) of the lemma. The exact triangle then implies that $ i_*\dL i^*F\cong F\oplus F[1]$, and so in particular $\operatorname{Supp}(F)=\operatorname{Supp}(\dL i^*F)$. Part (2) now follows from \cite[Lemma 3.29]{FMAG}. 
\end{proof}

For the smooth $K3$ fibration $\tpi:\tX\to \tC$, 
let us take a curve class 
\begin{align} \label{equ:classs2} 
\overline{\beta}=\widetilde{\beta}+me_i\in H_2(\tX,\Z)^{\tpi},
\end{align} 
for some irreducible curve class $\widetilde{\beta}
\in H_2(\widetilde{X}, \Z)$ 
and $m\in \Z_{\ge 0}$. 
Since $\widetilde{\beta}$ is an irreducible class Lemma \ref{stable-perv} applies for the perverse stable pairs $(\oO_{\widetilde{X}} \to F)$ such that $[F]=\overline{\beta}$. Motivated by the result of this lemma we assume the following conjecture:\footnote{This is an analog of Lemma \ref{lem:point}.}
\begin{conj}\label{conj:support} Let $(\oO_{\widetilde{X}} \to F)$ be a perverse stable pair 
for $\tX \to X_0$ such that $[F]=\overline{\beta}$. Using the notation of  Lemma \ref{stable-perv} Part (2), there exists a complex $G$ on $S$ such that $F=i_*G$.
\end{conj}

Assuming Conjecture \ref{conj:support}, there is a natural morphism 
\begin{align*}
\rho \colon \pP_n^{\rm{per}}(\tX, \overline{\beta})\to \tC. 
\end{align*}
Also similarly to Definition~\ref{defn:comps}, 
we can define the type I and  type II components $\pP^{\rm{per}}_{\rm{I}}, \pP^{\rm{per}}_{\rm{II}}$ of 
$\pP_n^{\rm{per}}(\tX, \overline{\beta})$. For a point $p\in \tC$, suppose that $S:=\tpi^{-1}(p)\xhookrightarrow{i} \tX$ and $\gamma$ is a curve class on $S$ with $i_*\gamma=\overline{\beta}$, then we denote the fiber $\rho^{-1}(p)$ of $\rho$ restricted to a type I or an isolated type II component by $\pP_n^{\rm{per}}(S, h)$ where $h:=\gamma^2/2+1$. Note that by the choice of $\overline{\beta}$ one can find a smooth deformation $S'$ of $S$ in which $\gamma$ becomes irreducible, and hence a perverse stable pair on $S'$ in this class is the same as a usual stable pair. From this and Proposition \ref{perv-smoothness}, one can see that $\pP_n^{\rm{per}}(S, h)$ is deformation equivalent to $\pP_n(\mathsf{K3}, h)$.
  We  have the analogs of Propositions \ref{loc-free}, 
 \ref{smoothness}, \ref{prop:typeI}
and Corollary \ref{decompos} that can be summarized as follows:

\begin{prop}\label{perv-smoothness}
The type I components and isolated type II components of the moduli space of perverse stable pairs $\pP^{\rm{per}}(\tX,\overline{\beta})$ are smooth. 
\end{prop}
\begin{proof}

We first prove the proposition for isolated type II components. The proof is along the lines of the proof of \cite[Proposition C.2]{a44}. We show that  the obstruction space vanishes for any given perverse stable pair $I^\bullet_{\text{per}}=(\O_{\tX} \xrightarrow{s} F)$ with $F=i_*G$ is supported on a $K3$ fiber $i:S\hookrightarrow \tX$. 
We will slightly abuse the notation and use the same symbol $I^\bullet_{\text{per}}$ for the perverse stable pair on $S$ and its pushforward on $\tX$.  As in the proof of [ibid], the obstruction space is the kernel of $v$ in the following natural exact sequence: 
\begin{align*}\Hom(I^\bullet_{\text{per}},G)&\to \Ext^1(G,G) \xrightarrow{u} \Ext^1(\O,G)\\ &\to \Ext^1(I^\bullet_{\text{per}},G) \xrightarrow{v} \Ext^2(G,G)_0 \to 0.\end{align*}
Lemma \ref{stable-perv}, and Serre duality implies that $\Ext^2(G,G)_0 = 0$, so to prove the claim we need to show that the map $u$ is surjective.
Let $S_0:=h(S)$, $p:S_0\to \text{Spec}\,\CC$ be the structure morphism,  and $$G_0:=\dR h_*G \in \Coh(S_0), \quad D_0:=\text{Supp}(G_0).$$ We can write $\dR\Gamma(G)=\dR p_*G_0$ and hence $$\Ext^1(\O,G)\cong H^1(G_0).$$
On the other hand, applying $\dR h_*$ to $\dR\hH om (G,G)\xrightarrow{\text{tr}} \O_{S}\xrightarrow{s} G $, we get the chain of maps $$\dR h_*\dR\hH om (G,G)\to \dR\hH om(G_0,G_0) \xrightarrow{\text{tr}} \O_{S_0}\xrightarrow{s_0} G_0. $$ Clearly, $s_0$ has a zero dimensional cokernel, and the trace map $$\hH om(G_0,G_0) \to \O_{D_0}$$ is surjective with at most a zero dimensional kernel, and therefore $s_0 \circ \text{tr}$ induces the surjection $H^1(\hH om(G_0,G_0))\to H^1(G_0)$. By the local to global spectral sequence

$$H^1(\hH^0(\dR h_* \dR\hH om (G,G))) \subseteq \Ext^1(G,G), $$ and hence $u$ is surjective as desired.

The proof of proposition for the type I components essentially follows the same analysis as above but this time carried out relative to the base curve $C$. 
\end{proof}

\begin{prop}\label{perv-cycles}
Suppose that any component of $\pP^{\rm{per}}_{\rm{II}}$ is isolated. Then we have
\begin{align*} 
&[\pP^{\rm{per}}_{\rm{II}},E^{\bullet}_{\rm{per}}]^{\rm{vir}}=[\pP^{\rm{per}}_{\rm{II}}]\cap c_{top}(\Omega_{\pP^{\rm{per}}_{\rm{II}}}),\\
&[\pP^{\rm{per}}_{\rm{I}},E^{\bullet}_{\rm{per}}]^{\rm{vir}}=[\pP^{\rm{per}}_{\rm{I}}]\cap \big(c_1(\rho^{*}\widetilde{\mathcal{K}}^{\vee})\cup c_{top}(\Omega_{\pP^{\rm{per}}_{\rm{I}}/\tC})\big), 
\end{align*}
where $\widetilde{\mathcal{K}}=\widetilde{\pi}_*\omega_{\tX/\tC}$. 

\end{prop}

\begin{proof}
The same as the proofs of Proposition \ref{loc-free} and  Corollary \ref{decompos} in which the stable pair $I^\bullet$ is replaced with the perverse stable pair $I^\bullet_{\text{per}}=(\O_{\tX} \xrightarrow{s} F)$. The surjectivity $\Ext^1(\F,\F) \to \Ext^1(\O_{\tX},F) =H^1(F)$ which is one of the important requirements can be obtained as the proof of surjectivity of the map $u$ in Proposition \ref{perv-smoothness}.  
\end{proof}

Now we provide an analog of Theorem \ref{thm:main formula} for perverse stable pair invariants:
\begin{thm} \label{thm:main formula-perv}
For the smooth $K3$ fibration $\tpi:\tX\to \tC$, 
suppose that $\overline{\beta}$ 
is given as in \eqref{equ:classs2}, and assume Conjecture \ref{conj:support}.
Then we have 
\begin{align*} 
\PT^{\rm{per}}(\tX)_{\overline{\beta}}&=\sum_{h=0}^{\infty}\sum_{n=1-h}^{\infty}(-1)^{n-1}\chi(\pP^{\rm{per}}_{n}(S,h))\cdot NL^{\tpi}_{h, \overline{\beta}}
q^n,
\end{align*} where $S$ is a $K3$ fiber of $\tX$. \hfill\qed
\end{thm}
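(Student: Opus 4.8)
The plan is to run the argument of Theorem~\ref{thm:main formula} essentially verbatim, now using Proposition~\ref{perv-smoothness} in place of Proposition~\ref{smoothness}, Proposition~\ref{loc-free} and Corollary~\ref{decompos}, and Lemma~\ref{stable-perv} to supply the morphism along which the moduli space decomposes. First I would invoke Lemma~\ref{stable-perv}: since $h_{\ast}\gamma=\gamma_0$ is irreducible in $X_0$, every perverse stable pair $(\oO_{\tX}\to\F)$ with $[\F]=\gamma$ satisfies $\Hom(\F,\F)=\CC$, so its support lies set-theoretically on a single fiber of $\tpi$. This produces a morphism $\rho\colon\pP^{\rm{per}}_n(\tX,\gamma)\to\tC$ and, exactly as in Definition~\ref{defn:comps}, a decomposition of $\pP^{\rm{per}}_n(\tX,\gamma)$ into type~I components $\pP^{\rm{per}}_0$ (those dominating $\tC$) and isolated type~II components $\pP^{\rm{per}}_{iso}$ (those supported over a single point of $\tC$). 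Crucially, although $\gamma=h^!\gamma_0+me_i$ may be reducible as a class on $\tX$, the simpleness furnished by Lemma~\ref{stable-perv} is precisely what makes these components smooth, so the Noether--Lefschetz bookkeeping below goes through even in the reducible case.

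Next I would import the relative Hodge-theoretic set-up of~\cite{a90} attached to the smooth fibration $\tpi$: the rank~$22$ local system $\V=R^2\tpi_{\ast}\Z$, the period section $\sigma\colon\tC\to\hH^{\V}$, and for each $h$ the finite set $\operatorname{B}(h,\gamma)$ of $(1,1)$-classes $\beta$ on the fibers with $\beta^2=2h-2$ and $i_{\ast}\beta=\gamma$, decomposed as $\operatorname{B}_I(h,\gamma)\coprod\operatorname{B}_{II}(h,\gamma)$ into a finite local system $\epsilon$ and an isolated set. As in the smooth case, $\operatorname{B}_I(h,\gamma)$ gives rise to a type~I component $\pP^{\rm{per}}_\epsilon$, and each point of $\operatorname{B}_{II}(h,\gamma)$ arising from a transversal intersection of $\sigma(\tC)$ with a Noether--Lefschetz divisor gives an isolated type~II component $\pP^{\rm{per}}_\alpha$ isomorphic to $\pP^{\rm{per}}_n(S,h)$. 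I would then compute the two contributions using the virtual class formulas of Proposition~\ref{perv-smoothness}. For the type~I component, integrating
\[
[\pP^{\rm{per}}_\epsilon,E^{\bullet}_{\rm{per}}]^{vir}=[\pP^{\rm{per}}_\epsilon]\cap\big(c_1(\rho^{\ast}\K^\vee)\cup c_{top}(\Omega_{\pP^{\rm{per}}_{0}/\tC})\big)
\]
and using that $c_{top}(\Omega_{\pP^{\rm{per}}_{0}/\tC})$ integrates along the fibers of $\rho$ to $(-1)^{n+2h-1}\chi(\pP^{\rm{per}}_n(S,h))$, while $\int_{\operatorname{B}_I(h,\gamma)}c_1(\rho^{\ast}\K^\vee)$ computes exactly the $\operatorname{B}_I$-contribution to $NL^{\tpi}_{h,\gamma}$ (the computation on pp.~22--23 of~\cite{a90}), yields $P^{\rm{per}}_{n,\epsilon}=(-1)^{n+2h-1}\chi(\pP^{\rm{per}}_n(S,h))$ multiplied by that contribution. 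For an isolated type~II component, the first formula of Proposition~\ref{perv-smoothness} gives $[\pP^{\rm{per}}_\alpha,E^{\bullet}_{\rm{per}}]^{vir}=[\pP^{\rm{per}}_\alpha]\cap c_{top}(\Omega_{\pP^{\rm{per}}_\alpha})$, so $P^{\rm{per}}_{n,\alpha}=(-1)^{n+2h-1}\chi(\pP^{\rm{per}}_n(S,h))$, and each such $\alpha$ contributes exactly $1$ to $NL^{\tpi}_{h,\gamma}$. Summing the two contributions over all $h$ and all components then assembles into the stated formula, in complete parallel with the proof of Theorem~\ref{thm:main formula}.

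The step I expect to be the main obstacle is the reduction, via deformation invariance, to the case where $\sigma(\tC)$ meets every Noether--Lefschetz divisor transversally, so that each class of $\operatorname{B}_{II}(h,\gamma)$ contributes $1$ to $NL^{\tpi}_{h,\gamma}$ and yields a genuine isolated type~II component. In the smooth case one simply invokes deformation invariance of the ordinary stable pair invariants; here one must establish the analogous invariance of the perverse invariants $P^{\rm{per}}_{n,\gamma}$ under deformations that move $\sigma(\tC)$ into general position, and verify that such deformations are compatible with the fixed contraction $h\colon\tX\to X_0$ defining the perverse heart. Equivalently, one must confirm that a perverse stable pair supported on a single fiber $S$ is the same datum as a perverse stable pair on the crepant resolution $S\to S_0$ of the corresponding (a priori nodal) fiber of $X_0\to C$, and that this fiberwise identification varies compatibly with the Hodge-theoretic family of~\cite{a90}. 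Because the perverse structure is defined relative to the global resolution $h$ rather than fiber by fiber, this compatibility is the delicate point; once it is in place, the remaining arguments are identical to those of Theorem~\ref{thm:main formula}.
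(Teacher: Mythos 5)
Your proposal is correct and follows exactly the route the paper intends: the paper states this theorem with a \qed and no separate proof precisely because it is meant to follow by rerunning the proof of Theorem~\ref{thm:main formula} with Lemma~\ref{stable-perv} supplying the morphism $\rho$ and Proposition~\ref{perv-smoothness} replacing Propositions~\ref{smoothness}, \ref{loc-free} and Corollary~\ref{decompos}, which is what you do. The delicate point you flag at the end (deformation invariance of the perverse invariants needed to reduce to transversal Noether--Lefschetz intersections) is a genuine subtlety, but it is one the paper itself leaves implicit, since the perverse moduli space and its obstruction theory \eqref{per-PT} are only conjecturally constructed there.
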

\begin{rmk}
By the discussion before Proposition \ref{perv-smoothness}, we can see that the Euler characteristics $\chi(\pP^{\rm{per}}_{n}(S,h))=\chi(\pP_{n}(\mathsf{K3},h))$ in Theorem \ref{thm:main formula-perv} can be read off from Kawai-Yoshioka's formula \eqref{equ:KW}. 
\end{rmk}

\begin{rmk}
If the conjectural relation (\ref{PT=perv2}) holds, 
then by Theorem~\ref{thm:main formula-perv}, it enables
us to compute stable pair invariants
for the nodal $K3$ fibration $X \to C$
with irreducible curve classes. 
\end{rmk}

\section{Wall-crossing formula for $K3$ fibrations} \label{sec:wall}
In this section, we study the stable pair invariants on 
$K3$ fibrations from a different approach. 
We first relate the stable pair invariants 
on a $K3$ fibration (with possibly singular fibers)
to the generalized DT invariants counting 
torsion sheaves supported on fibers of 
the $K3$ fibration. This is achieved by an argument similar to~\cite{TodK3}, 
where a similar problem was studied 
for the trivial $K3$ fibration 
$S \times \mathbb{C} \to \mathbb{C}$
for a $K3$ surface $S$.  
The key ingredient was the 
wall-crossing formula of DT type invariants 
in the derived category~\cite{JS}, \cite{K-S}. 
The latter invariants are calculated in \cite{G-S}
for nodal $K3$ fibrations, and  
we apply its result to calculate the 
stable pair invariants on nodal $K3$ fibrations
with irreducible curve classes. 
For a non-irreducible curve classes, the two approaches 
give different descriptions of the stable pair invariants on 
nodal $K3$ fibrations. 
This gives a non-trivial relationship between the 
generalized DT invariants on nodal $K3$ fibrations and the 
stable pair invariants on smooth $K3$ fibrations, 
which may have some applications to 
the study of the generalized DT invariants.   
\subsection{Stability conditions for $K3$ fibrations}
Let $X$ 
be a smooth 
projective Calabi-Yau 3-fold over $\mathbb{C}$, i.e. 
\begin{align*}
K_X =0, \quad H^1(X, \oO_X)=0. 
\end{align*}
We assume that there is a 
morphism 
\begin{align*}
\pi \colon X \to \mathbb{P}^1
\end{align*}
such that every scheme-theoretic fiber is an integral scheme. 
Note that a generic fiber
of $\pi$ is a smooth algebraic $K3$ surface. 
An example is given in Example~\ref{exam:nodal}. 
Let $\Coh_{\pi}(X)$ be the subcategory of 
$E \in \Coh(X)$ such that $\Supp(E)$ is contained in 
the fibers of $\pi$. 
We consider 
its bounded derived category
\begin{align*}
\dD_0 \cneq D^b \Coh_{\pi}(X). 
\end{align*}
Note that for any object $E \in \dD_0$, we have 
\begin{align}\notag
\ch(E) \sqrt{\td_X} &=(0, r[F], \beta, n) \\
\label{stab:ch}&\in H^0(X) \oplus H^2(X)
 \oplus H^4(X) \oplus H^6(X). 
\end{align}
Here $[F]$ is a fiber class of $\pi$, 
and $r, n \in \mathbb{Z}$. 
Under the Poincar\'e duality, the element $\beta$
is regarded as an element in $H_2(X, \mathbb{Z})^{\pi}$. 
By setting 
\begin{align*}
\Gamma_0 \cneq \mathbb{Z} \oplus H_2(X, \mathbb{Z})^{\pi} \oplus \mathbb{Z}
\end{align*}
the expression (\ref{stab:ch}) gives a group homomorphism
\begin{align}\label{map:cl0}
\cl_0 \colon K(\dD_0) \to \Gamma_0, \quad 
E \mapsto (r, \beta, n). 
\end{align}
Let $\omega$ be an ample divisor 
on $X$. 
Using the map (\ref{map:cl0}), we 
define the slope function $\mu_{\omega}$ on 
$\Coh_{\pi}(X)$ by 
\begin{align*}
\mu_{\omega}(E)= \int_{X} \omega \cdot \beta / r \in \mathbb{Q} 
\cup \{\infty\}.
\end{align*}
Here $\cl_0(E)=(r, \beta, n)$ and 
$\mu_{\omega}(E)=\infty$ if $r=0$, i.e. 
$E$ is one or zero dimensional 
sheaf. 
The slope function $\mu_{\omega}$ defines the 
$\mu_{\omega}$-stability on $\Coh_{\pi}(X)$ in
an obvious way. 
We define the following full subcategories in $\Coh_{\pi}(X)$:
\begin{align*}
\tT_{\omega} &\cneq \langle F : \mu_{\omega} \mbox{-semistable with }
\mu_{\omega}(F)>0 \rangle_{\rm{ex}} \\
\fF_{\omega} &\cneq \langle F : \mu_{\omega} \mbox{-semistable with }
\mu_{\omega}(F) \le 0 \rangle_{\rm{ex}}. 
\end{align*}
Here $\langle \ast \rangle_{\rm{ex}}$ is the 
extension closure of $\ast$, i.e. the smallest
extension closed subcategory which contains $\ast$.  
The above pair $(\tT_{\omega}, \fF_{\omega})$
determines a torsion pair (cf.~\cite{HRS}) on $\Coh_{\pi}(X)$.
We have the associated tilting
\begin{align*}
\bB_{\omega} \cneq \langle \fF_{\omega}, \tT_{\omega}[-1] \rangle_{\rm{ex}}
\subset D^b \Coh_{\pi}(X). 
\end{align*}
The subcategory $\bB_{\omega}$ is the heart 
of a bounded t-structure of $D^b \Coh_{\pi}(X)$, 
hence in particular it is an abelian category. 
Let $Z_{\omega, 0} \colon \Gamma_0 \to \mathbb{C}$
be the group homomorphism defined by 
\begin{align}\notag
Z_{\omega, 0}(v) &\cneq \int_X e^{-i\omega} v \\
\label{hom:Z}
&=n- \frac{r}{2} (\omega|_{X_p})^2 -(\omega \cdot \beta) \sqrt{-1}. 
\end{align}
Here we have written $v=(r, \beta, n)$, 
and $X_p=\pi^{-1}(p)$ for a closed point $p\in C$. 
Below we assume the 
familiarity of Bridgeland stability conditions~\cite{Brs1}
on triangulated categories. 
\begin{lem}\label{pair:B}
The pairs
\begin{align}\notag
\sigma_{t\omega, 0} \cneq 
(Z_{t\omega, 0}, \bB_{\omega}), \quad t>0
\end{align}
determine Bridgeland stability conditions on $\dD_0$. 
\end{lem}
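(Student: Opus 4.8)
The plan is to verify the four defining properties of a Bridgeland stability condition for the pair $(Z_{t\omega,0},\bB_{\omega})$: that $\bB_{\omega}$ is the heart of a bounded t-structure, that $Z_{t\omega,0}$ is a stability function on $\bB_{\omega}$, that the Harder--Narasimhan property holds, and that the support property is satisfied. The whole construction is a fiberwise analogue of Bridgeland's stability conditions on a $K3$ surface \cite{Br1}, adapted to the relative category $\dD_0=D^b\Coh_{\pi}(X)$ as in \cite{TodK3}, with the central charge $Z_{t\omega,0}(v)=\int_X e^{-it\omega}v$ playing the role of the $K3$ central charge. The first property is immediate: $\mu_{\omega}$-stability on $\Coh_{\pi}(X)$ admits Harder--Narasimhan filtrations, so $(\tT_{\omega},\fF_{\omega})$ is a torsion pair and $\bB_{\omega}$ is its tilt in the sense of Happel--Reiten--Smal\o{}; hence $\bB_{\omega}$ is a heart. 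It therefore remains to check positivity, the HN property, and the support property.

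For positivity I would first note that $\mathrm{Im}\,Z_{t\omega,0}(E)=-t(\omega\cdot\beta)$, which is additive, so it suffices to check it on the generators of $\bB_{\omega}$: on $\fF_{\omega}$ one has $\mu_{\omega}\le 0$, hence $\omega\cdot\beta\le 0$ and $\mathrm{Im}\,Z_{t\omega,0}\ge 0$, while for $E[-1]$ with $E\in\tT_{\omega}$ the class changes sign and $\omega\cdot\beta\ge 0$, again giving $\mathrm{Im}\,Z_{t\omega,0}\ge 0$. The essential point is the boundary case $\mathrm{Im}\,Z_{t\omega,0}(E)=0$, where I must show $\mathrm{Re}\,Z_{t\omega,0}(E)<0$ for $E\neq 0$. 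Using Jordan--H\"older filtrations this reduces either to a $0$-dimensional sheaf placed in degree $1$ (where $\mathrm{Re}\,Z_{t\omega,0}<0$ is clear), or to a $\mu_{\omega}$-semistable sheaf $F$ of slope $0$ supported on a smooth fiber $S=X_p$ with $r=\rank F>0$ and $\beta=c_1(F)$. Here the normalization $[X_p]\cdot c_2(X)=\chi_{\mathrm{top}}(S)=24$ together with $K_X\cong\oO_X$ gives $n=\int_S\ch_2(F)-r$. Since $\omega\cdot\beta=0$, the Hodge index theorem yields $\beta^2\le 0$, and the Bogomolov--Gieseker inequality on the $K3$ fiber gives $\int_S\ch_2(F)\le \tfrac{\beta^2}{2r}\le 0$; hence
\begin{align*}
\mathrm{Re}\,Z_{t\omega,0}(F)=\int_S\ch_2(F)-r-\frac{rt^2}{2}(\omega|_{X_p})^2\le -r\left(1+\frac{t^2}{2}(\omega|_{X_p})^2\right)<0.
\end{align*}
The shift by $-r$ coming from $\sqrt{\td_X}$ is exactly what makes this hold for \emph{every} $t>0$, with no largeness assumption on $t$ (in contrast to the absolute $K3$ case, where a condition such as $\omega^2>2$ is needed).

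For the Harder--Narasimhan property I would follow \cite{Br1}: since $Z_{t\omega,0}$ factors through the finite-rank lattice $\Gamma_0$, one filters an object first by the imaginary part of the central charge and then, within each step, by $\mu_{\omega}$-semistability in $\Coh_{\pi}(X)$, the absence of infinite refining chains being forced by the discreteness of $\Gamma_0$. For the support property I would exhibit a Bogomolov--Mukai type quadratic form $Q$ on $\Gamma_0\otimes\mathbb{R}$, essentially the self-intersection form of the fiber Mukai lattice in the variables $(r,\beta,n)$, which is negative definite on $\ker Z_{t\omega,0}$ by the Hodge index theorem (on $\ker Z_{t\omega,0}\setminus 0$ one has $r>0$, $\beta^2\le 0$ and $n>0$) and bounded below on the classes of $\mu_{\omega}$-semistable objects by the Bogomolov--Gieseker inequality. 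This is precisely the support property, and it yields local finiteness, so that $(Z_{t\omega,0},\bB_{\omega})$ is a genuine Bridgeland stability condition for all $t>0$.

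The main obstacle is making the Bogomolov--Gieseker inequality on the fibers uniform over the family. A generic fiber is a smooth $K3$ surface, where the inequality and the Hodge index theorem are classical, but the hypothesis only guarantees that every scheme-theoretic fiber is \emph{integral}, so the finitely many singular fibers need care and the clean relation $n=\int_S\ch_2(F)-r$ acquires corrections there. I would handle these either by a specialization/semicontinuity argument reducing the inequality on a singular fiber to the generic smooth one, using that the numerical class lives in the fixed lattice $\Gamma_0$ and that $\beta^2$ is computed on a generic fiber, or by invoking the Bogomolov inequality for semistable sheaves on normal projective surfaces. The relative (and possibly non-projective) nature of $\dD_0$ is dealt with exactly as in \cite{TodK3}: the central charge sees only the fiber classes, so the construction is genuinely fiberwise and the verification above is insensitive to the base curve.
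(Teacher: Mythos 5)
Your outline for the smooth-fiber case matches the paper's: tilt $\Coh_\pi(X)$ at $\mu_\omega=0$, reduce positivity to slope-zero $\mu_\omega$-stable sheaves, and settle that case by Bogomolov--Gieseker plus the Hodge index theorem; your remark that the $\sqrt{\td_X}$-shift in the definition of $n$ is what makes positivity hold for \emph{every} $t>0$, with no largeness condition on $t\omega$, is also the right one. But the crux of the paper's proof --- the point its authors explicitly flag --- is the singular fibers, and both of your proposed fixes fail there. The specialization/semicontinuity idea does not get off the ground: a $\mu_\omega$-stable sheaf supported on a singular fiber $X_p$ need not deform to sheaves on nearby smooth fibers, so there is no family to which semicontinuity could be applied, and constancy of the class in $\Gamma_0$ does not produce one. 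The alternative, Bogomolov's inequality on normal projective surfaces, does not apply either, because the hypothesis is only that every scheme-theoretic fiber is \emph{integral}; integral fibers need not be normal, and a Bogomolov-type inequality on such surfaces is precisely what one cannot invoke.

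The paper's actual argument for singular fibers avoids Bogomolov--Gieseker entirely, and this is the idea your proposal is missing: since $X_p$ is integral, $\oO_{X_p}$ is itself a $\mu_\omega$-stable sheaf of slope $0$, so for a slope-zero $\mu_\omega$-stable sheaf $E$ on $X_p$ with $r\ge 2$, stability forces $\Hom(\oO_{X_p},E)=\Hom(E,\oO_{X_p})=0$; Serre duality on the fiber (which is Gorenstein with trivial dualizing sheaf) then kills $\Ext^2(\oO_{X_p},E)$ as well, so $\chi(E)=-h^1(E)\le 0$, and Riemann--Roch converts this into the required sign of $n$. The case $r=1$ is treated separately, $E$ being an ideal sheaf of points on $X_p$. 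Secondarily, your Harder--Narasimhan argument ("filter by the imaginary part, then by $\mu_\omega$, finiteness from discreteness of $\Gamma_0$") is too coarse: the image of $\omega\cdot(-)$ on $H_2(X,\mathbb{Z})^\pi$ need not be discrete for irrational $\omega$, and the genuine difficulty, which the paper imports from Bridgeland's Proposition 7.1, is a chain-stabilization argument in which the relevant subsheaf $Q$ is, in this fibered setting, no longer torsion-free; the paper repairs this by pushing forward under a generic projection $\Supp(Q)\to \P^2$. Your sketch engages with neither point.
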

\begin{proof}
The proof is almost the same as in the $K3$ surface case 
in~\cite[Proposition~7.1]{Brs2}. 
However, we need to take a little care since there may
be possible singular fibers of $\pi$. 
For non-zero $E \in \bB_{\omega}$, let us show the following property
\begin{align}\label{positivity}
Z_{t\omega, 0}(E) \in \{ r \exp(i\pi \phi) : r>0, \phi \in (0, 1] \}. 
\end{align}
By our construction of $\bB_{\omega}$,
we have $\Imm Z_{t\omega, 0}(E)\ge 0$. 
If $\Imm Z_{t\omega, 0}(E)=0$, then 
$\hH^1(E)$ is zero dimensional and 
$\hH^0(E)$ is a 
$\mu_{\omega}$-semistable sheaf 
with $\mu_{\omega}(\hH^0(E))=0$. 
Therefore by (\ref{hom:Z}), 
it is enough to show that 
for any $\mu_{\omega}$-stable sheaf 
$F \in \Coh_{\pi}(X)$
with $\cl_0(F)=(r, \beta, n)$
and $\omega \cdot \beta=0$, we have 
$n\le 0$. 
Since $F$ is $\mu_{\omega}$-stable, it is 
an $\oO_{X_p}$-module 
for some $p \in \mathbb{P}^1$. 
If $X_p$ is smooth, then
the inequality
$n\le 0$ this is a 
consequence of Bogomolov-Gieseker inequality. 
In general, since $X_p$ is an integral scheme, 
the structure sheaf $\oO_{X_p}$ is a $\mu_{\omega}$-stable sheaf
with $\mu_{\omega}(\oO_{X_p})=0$. 
By the $\mu_{\omega}$-stability of $F$, if 
$r\ge 2$, we have $\Hom(\oO_{X_p}, F)=\Hom(F, \oO_{X_p})=0$. 
By the Serre duality and the Riemann-Roch theorem, we obtain
\begin{align}\label{RR}
\chi(F) = r+n \le 0. 
\end{align}
If $r=1$, 
we may assume that
$\Hom(F, \oO_{X_p}) =0$. 
Indeed if otherwise, then $F$ is a ideal sheaf of 
$\oO_{X_p}$, hence the inequality $n\le 0$. 
Then we also have $\Hom(\oO_{X_p}, F)=0$. 
Indeed if there is a non-zero morphism 
$\oO_{X_p} \to F$, then it must be injective and the 
cokernel must be a zero dimensional sheaf. 
But as $\Ext_{X_p}^1(\oO_x, \oO_{X_p})=0$ for any $x\in X_p$, 
the sheaf $F$ must contain a zero dimensional sheaf, 
which contradicts to the $\mu_{\omega}$-stability of $F$. 
Therefore $\Hom(F, \oO_{X_p})=\Hom(\oO_{X_p}, F)=0$, 
and the Riemann-Roch computation (\ref{RR}) shows $n\le 0$. 

We finally check the Harder-Narasimhan property. 
By following the arguments 
as in~\cite[Proposition~7.1]{Brs2}, 
we finally arrive at the exact sequences of sheaves
for $i\ge 1$
\begin{align*}
0 \to Q \to \hH^{-1}(E_i) \to \hH^0(L_i) \to 0. 
\end{align*}
Here we have used the same notation of~\cite[Proposition~7.1]{Brs2}. 
The sheaf $Q$ is a pure two dimensional sheaf and $\hH^0(L_i)$ is a 
zero dimensional sheaf. 
A difference from~\cite[Proposition~7.1]{Brs2} is that our 
sheaf $Q$ is no longer a torsion free sheaf. 
However by pushing forward the above sequence by the 
generic projection $\Supp(\qQ) \to \mathbb{P}^2$, we 
arrive at the same situation in~\cite[Proposition~7.1]{Brs2}. 
Therefore the Harder-Narasimhan property also holds. 
\end{proof}

\subsection{Weak stability conditions on $\dD$}
Let $\dD$ be the triangulated subcategory of $D^b \Coh(X)$
defined by 
\begin{align*}
\dD \cneq \langle \pi^{\ast} \Pic(\mathbb{P}^1), 
\Coh_{\pi}(X)  \rangle_{\rm{tr}} \subset D^b \Coh(X).
\end{align*}
Here $\langle \ast \rangle_{\rm{tr}}$ is the
triangulated closure, i.e. 
the smallest triangulated subcategory which contains $\ast$. 
We construct weak stability conditions on $\dD$ in the 
sense of~\cite{Tcurve1}, following the same
argument of~\cite[Subsection~3.3]{TodK3}. 
We refer to~\cite[Section~2]{Tcurve1} for details on the space of 
weak stability conditions on triangulated categories.
Let $\aA_{\omega}$ be the subcategory 
of $\dD$ defined by 
\begin{align*}
\aA_{\omega} \cneq \langle \pi^{\ast}\Pic(\mathbb{P}^1), \bB_{\omega}
\rangle_{\rm{ex}} \subset \dD. 
\end{align*}
One can check that, using the same argument of~\cite[Proposition~2.9]{TodK3}, 
the subcategory $\aA_{\omega}$ is the heart of a bounded t-structure on $\dD$. 
In particular it is an abelian category. 
We define the group homomorphism
\begin{align*}
\cl \colon K(\dD) \to \Gamma \cneq \mathbb{Z} \oplus \Gamma_0
\end{align*}
by $\cl|_{K(\dD_0)}=(0, \cl_0)$ and 
\begin{align*}
\cl(\pi^{\ast}\lL)= (1, \deg \lL, 0, 0)
\end{align*}
for $\lL \in \Pic(\mathbb{P}^1)$. 
We take a filtration $\Gamma_{\bullet}$
of $\Gamma$ by
\begin{align*}
0= \Gamma_{-1} \subset \Gamma_0 \subset \Gamma_1 \cneq \Gamma
\end{align*}
where the second inclusion is given by $v \mapsto (0, v)$. 
We define the element 
\begin{align*}
Z_{\omega} \in \prod_{i=0}^{1} \Hom(\Gamma_i/\Gamma_{i-1}, \mathbb{C})
\end{align*}
to be the following:
\begin{align*}
&Z_{\omega, 1}(R) \cneq R \sqrt{-1}, 
\quad R \in \Gamma_1/\Gamma_0 =\mathbb{Z} \\
&Z_{\omega, 0}(v) \cneq \int_X e^{-i\omega}v, \quad v\in \Gamma_0. 
\end{align*}
\begin{lem}
The pairs 
\begin{align*}
\sigma_{t\omega} \cneq (Z_{t\omega}, \aA_{\omega}), \quad 
t\in \mathbb{R}_{>0}
\end{align*}
determine weak stability conditions on $\dD$
with respect to the filtration $\Gamma_{\bullet}$
on $\Gamma$. 
\end{lem}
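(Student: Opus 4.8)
The plan is to verify the two defining properties of a weak stability condition in the sense of~\cite{Tcurve1} for the pair $\sigma_{t\omega}=(Z_{t\omega}, \aA_{\omega})$: the positivity of the filtered central charge on the heart $\aA_{\omega}$, and the Harder--Narasimhan property with respect to the phase function determined lexicographically by the filtration $\Gamma_{\bullet}$. Since $\aA_{\omega}$ has already been shown to be the heart of a bounded t-structure on $\dD$ (following the argument of~\cite[Proposition~2.9]{TodK3}), the proof runs along the same lines as~\cite[Subsection~3.3]{TodK3}, the only genuinely new feature being the presence of possibly singular fibers of $\pi$.

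First I would establish positivity. For $0 \neq E \in \aA_{\omega}$, write $R \in \mathbb{Z}$ for the image of $\cl(E)$ under the projection $\Gamma \to \Gamma_1/\Gamma_0=\mathbb{Z}$. Because $\aA_{\omega}=\langle \pi^{\ast}\Pic(\mathbb{P}^1), \bB_{\omega}\rangle_{\mathrm{ex}}$ is generated by extensions of the rank-one objects $\pi^{\ast}\lL$ and the rank-zero objects of $\bB_{\omega}$, additivity of $R$ in short exact sequences forces $R\ge 0$. If $R>0$, the leading term of the central charge is $Z_{t\omega,1}(R)=R\sqrt{-1}$, which lies in the upper half plane $\{ r\exp(\sqrt{-1}\pi\phi): r>0,\ \phi\in(0,1]\}$. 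If $R=0$, then $E$ carries no line-bundle factor, so $E \in \bB_{\omega}\subset \dD_0$, and the required positivity is exactly the property~\eqref{positivity} proved in the previous lemma.

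Next I would prove the Harder--Narasimhan property. The key observation is that the level-one central charge $Z_{t\omega,1}(R)=R\sqrt{-1}$ depends only on the rank $R$, so all objects of positive rank acquire the same leading phase and no destabilization can occur \emph{within} level one. Consequently the HN filtration of an arbitrary $E\in\aA_{\omega}$ is produced by gluing: one separates the level-zero part, which lives in $\bB_{\omega}$ and inherits the HN property from the stability condition $\sigma_{t\omega,0}$ on $\dD_0$ of the previous lemma, and then treats the positive-rank part, which is governed entirely by $Z_{t\omega,1}$. This is precisely the gluing mechanism for weak stability conditions in~\cite{Tcurve1}, which I would invoke to assemble the global HN filtration from the HN data on the two graded pieces of $\Gamma_{\bullet}$.

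The main obstacle is the HN property, and within it the control of the line bundles $\pi^{\ast}\lL$ pulled back from the base: since $\Hom(\pi^{\ast}\oO(a), \pi^{\ast}\oO(b))=H^0(\mathbb{P}^1,\oO(b-a))$ may be large, one must rule out infinite destabilizing chains among level-one objects. This is exactly where the special form of $Z_{t\omega,1}$ enters: being insensitive to everything but the rank, it makes the potential pathology from the base invisible at level one, so that any refinement of the HN filtration is forced through the level-zero subquotients, where the Noetherian-type finiteness supplied by $\sigma_{t\omega,0}$ applies. The possible singular (integral, non-smooth) fibers of $\pi$ cause no additional trouble here, as they enter only through the level-zero category $\dD_0$, for which the requisite stability properties were already secured in the previous lemma.
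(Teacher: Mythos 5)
Your positivity argument is correct and is essentially the standard one: the rank $R$ is additive and non-negative on $\aA_{\omega}$, objects of positive rank have leading charge $R\sqrt{-1}$ in the semi-closed upper half plane, and rank-zero objects lie in $\bB_{\omega}$ (the extension closure of $\bB_{\omega}$ inside $\dD$ is $\bB_{\omega}$ itself, since $\dD_0$ is a triangulated subcategory of $\dD$), so positivity reduces to property (\ref{positivity}) established in the previous lemma. This matches the first half of the argument the paper invokes, namely that of \cite[Lemma~3.4]{TodK3}; note that the paper's own proof is nothing more than a citation of that lemma.

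The Harder--Narasimhan part, however, has a genuine gap. You claim the HN filtration of an arbitrary $E \in \aA_{\omega}$ is ``produced by gluing'' the HN data on the two graded pieces of $\Gamma_{\bullet}$, and you attribute this gluing mechanism to \cite{Tcurve1}; no such formal statement exists there, and the reduction is not formal. Concretely, for $E \in \aA_{\omega}$ of positive rank, building the HN filtration requires (i) the existence of a \emph{maximal} subobject $T \subset E$ lying in the torsion class of rank-zero objects all of whose $\sigma_{t\omega,0}$-HN phases exceed $1/2$, and (ii) termination of the dual process of stripping off rank-zero quotients of phase $<1/2$. An increasing chain $T_1 \subset T_2 \subset \cdots \subset E$ of rank-zero subobjects of a positive-rank object is \emph{not} a chain of subobjects of any fixed object of $\bB_{\omega}$, so neither the HN property of $\sigma_{t\omega,0}$ nor any finiteness obtained in the previous lemma applies to it: the required ascending chain condition is a property of the larger heart $\aA_{\omega}$, i.e.\ a noetherianity-type statement for $\aA_{\omega}$. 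Supplying exactly this input --- via the chain-condition criterion for the HN property in \cite{Tcurve1} together with an explicit cohomological description of the heart $\aA_{\omega}$ --- is the substantive content of \cite[Lemma~3.4]{TodK3}, and it is the step your proof omits. Your observation that $Z_{t\omega,1}$ depends only on the rank correctly shows that no destabilization occurs \emph{among} positive-rank objects, but it does not produce the maximal destabilizing rank-zero subobjects whose existence is what the HN property actually demands.
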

\begin{proof}
Using Lemma~\ref{pair:B} instead of~\cite[Lemma~3.3]{TodK3}, 
the same argument of~\cite[Lemma~3.4]{TodK3} is applied
without any major modification. 
\end{proof}

\subsection{Donaldson-Thomas type invariants}
Let $M$ be a $\mathbb{C}$-scheme. 
In the paper~\cite{a1}, Behrend
associated a canonical constructible function $\nu$ on it, 
called \textit{Behrend function}. 
The $\nu$-weighted Euler characteristic
\begin{align}\label{DTtype}
\int_{M} \nu \ d\chi \cneq \sum_{m\in \mathbb{Z}}
m \cdot \chi(\nu^{-1}(m))
\end{align}
is called the DT-type invariants. 
This is due to the fact by
Behrend~\cite{a1} that, if 
$M$ is proper and admits a 
symmetric perfect obstruction theory, 
we have the identity
\begin{align*}
\int_{[M]^{\rm{\rm{vir}}}}1 = \int_{M} \nu \ d\chi. 
\end{align*}
On the other hand, the 
naive Euler characteristic $\chi(M)$ is 
called the Euler characteristic version
of the invariant (\ref{DTtype}). 

Since we assumed that $X$ is a smooth projective Calabi-Yau 3-fold, 
the perfect obstruction theory in Theorem~\ref{PT}
is symmetric, hence we have 
\begin{align*}
P_{n, \beta}=
\int_{\pP_n(X, \beta)} \nu \ d\chi. 
\end{align*}
We can similarly define DT type 
invariants on $X$ using the Behrend function. 
For $(r, \beta, n) \in \Gamma_0$, let 
$\mM_{\omega}(r, \beta, n)$ be the 
moduli stack of $\omega$-Gieseker stable 
sheaves $E\in \Coh_{\pi}(X)$ with 
$\cl_0(E)=(r, \beta, n)$. 
If any closed point of $\mM_{\omega}(r, \beta, n)$
is stable, then it is a 
$\mathbb{C}^{\ast}$-gerbe over a 
projective scheme, hence the 
following integration makes sense:
\begin{align}\label{inv:J}
J(r, \beta, n) 
=\int_{\mM_{\omega}(r, \beta, n)} \nu \ d\chi. 
\end{align} 
In general, 
there may be a 
closed point of 
$\mM_{\omega}(r, \beta, n)$ 
corresponding to a strictly semistable sheaf. 
In such a case, the invariant (\ref{inv:J})
is defined in~\cite{JS}, \cite{K-S}, 
as the generalized DT invariant, 
and takes its value in $\mathbb{Q}$. 
It
is defined as
 the weighted Euler 
number of the `logarithm' of 
$\mM_{\omega}(r, \beta, n)$ in the 
motivic Hall algebra
of $\Coh_{\pi}(X)$. 
An Euler characteristic version of the 
invariant (\ref{inv:J}) for the 
trivial $K3$ fibration 
$S \times \mathbb{C} \to \mathbb{C}$
for a $K3$ surface $S$
is available in~\cite[Definition~4.23]{TodK3}. 
\begin{lem}
The invariant (\ref{inv:J})
is independent of a choice of $\omega$. 
\end{lem}
\begin{proof}
Since the Euler pairing on $\dD_0$ is trivial, the 
same argument of~\cite[Lemma~4.16]{TodK3}
is applied. 
\end{proof}

\subsection{Wall-crossing formula}
The following is the main result in this section: 
\begin{thm}\label{thm:WCF}
We have the following formula
\begin{align}\notag
\PT(X)=\prod_{r\ge 0, \beta>0, n\ge 0} &
\exp\left((-1)^{n-1}J(r, \beta, r+n)q^n t^{\beta} \right)^{n+2r} \\
\label{main:formula}
&\cdot \prod_{r> 0, \beta>0, n>0}
\exp\left((-1)^{n-1}J(r, \beta, r+n) q^{-n}t^{\beta} \right)^{n+2r}.
\end{align}
\end{thm}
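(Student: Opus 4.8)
The plan is to realize both $\PT(X)$ and the generating series of the generalized DT invariants $J(r,\beta,n)$ as counting invariants attached to the one-parameter family of weak stability conditions $\sigma_{t\omega}=(Z_{t\omega},\aA_\omega)$ on $\dD$ constructed above, and then to interpolate between the two by the Joyce-Song wall-crossing formula, following the strategy of~\cite{TodK3}. A stable pair $(\oO_X\xrightarrow{s}\F)$ with $[\F]=\beta$ and $\chi(\F)=n$ is viewed as the two-term complex $\Ib=(\oO_X\to\F)$, and the first step is to check that $\Ib$ lies in the heart $\aA_\omega$, with class $\cl(\Ib)=(1,0,-\beta,-n)$ in $\Gamma=\Z\oplus\Gamma_0$; here the leading $1$ records the $\pi^\ast\Pic(\mathbb{P}^1)$-direction $\Gamma_1/\Gamma_0$ and the fiber-multiplicity entry of $\Gamma_0$ vanishes since $\F$ is supported on curves. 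I would then prove that, in the chamber $t\gg 0$, the $\sigma_{t\omega}$-stable objects of this class are exactly the PT stable pairs. This is the analog in our setting of the identification of limit-stable rank-one objects with stable pairs in~\cite{TodK3}: because $Z_{t\omega,1}$ assigns a fixed phase to the $\oO_X$-factor while the phases of fiber-supported objects drift as $t\to\infty$, the only possible destabilizers are objects of $\dD_0$, and the stability bound forces the cokernel of $s$ to be zero-dimensional.

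At the opposite end of the family the relevant $\sigma_{t\omega}$-semistable objects of class $(1,0,-\beta,-n)$ split off the rank-one factor $\oO_X$ (or a twist $\pi^\ast\lL$) and reduce to $Z_{t\omega,0}$-semistable objects of $\bB_\omega$ supported on the fibers. The second step is to express the invariant in this chamber through the generalized DT invariants $J(r,\beta,n)$, which count the $\omega$-Gieseker semistable two-dimensional sheaves $\F\in\Coh_\pi(X)$ with $\cl_0(\F)=(r,\beta,n)$; here one uses that on $\dD_0$ the Harder-Narasimhan filtrations of fiber-supported objects with respect to $\sigma_{t\omega,0}$ are governed by Gieseker stability, as established in the previous subsections.

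The core of the proof is then to run the Joyce-Song wall-crossing formula in $\dD$ as $t$ decreases from the PT chamber through the finitely many walls to the sheaf-counting chamber. Each wall occurs where the phase of a fiber-supported semistable object of class $(r,\beta,r+n)$ meets that of the rank-one object, and the associated wall-crossing term contributes a factor $\exp\left((-1)^{n-1}J(r,\beta,r+n)q^{\pm n}t^\beta\right)^{n+2r}$. The multiplicities $n+2r$ and the signs $(-1)^{n-1}$ are read off from the Euler pairing between the class of $\oO_X$ and that of the destabilizing sheaf, together with the Behrend-function parity contributions, exactly as in~\cite{JS} and~\cite{TodK3}. The appearance of both $q^{n}$ and $q^{-n}$ reflects that a fiber-supported sheaf can destabilize $\Ib$ either as a subobject or as a quotient, which produces the two products in the stated formula; assembling the contributions over all $(r,\beta,n)$ with $r\ge 0$ and $\beta>0$ gives the asserted expression for $\PT(X)$.

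The hard part will be to justify that the Joyce-Song machinery applies to the triangulated category $\dD$ rather than to honest coherent sheaves. The motivic and Behrend-function identities underlying the formula are established in~\cite{JS} via local presentations of the moduli stack of sheaves on a Calabi-Yau threefold as critical loci, whereas our semistable objects are genuine complexes in $\dD$. This is precisely the role of Conjecture~\ref{conj:critical}: for every object $E$ with $\Ext^{<0}(E,E)=0$ it supplies a smooth chart $\Phi\colon[\{df=0\}/G]\to\mM$ presenting the moduli stack locally as a critical locus, so that the wall-crossing formula and the integrality/no-pole arguments of~\cite{TodK3} carry over without essential change. Granting this conjecture, what remains is the routine, if lengthy, bookkeeping of the wall-crossing contributions outlined above.
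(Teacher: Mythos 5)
There is a genuine gap, and it lies at the heart of your argument: the claim that for $t \gg 0$ the $\sigma_{t\omega}$-stable objects of class $(1,0,-\beta,-n)$ are exactly the PT stable pairs is false, so the family $\sigma_{t\omega}$ does not interpolate between a ``PT chamber'' and a ``Gieseker chamber''. In the weak stability condition $\sigma_{t\omega}$ the rank-one direction has central charge $Z_{t\omega,1}=\sqrt{-1}$, so the phase of $\Ib$ is frozen at $1/2$ for every $t$; on the other hand a subsheaf $\F'\subset \F$ gives a subobject $\F'[-1]\subset \Ib$ in $\aA_{\omega}$ with charge $-\chi(\F')+t(\omega\cdot\beta')\sqrt{-1}$, which destabilizes $\Ib$ precisely when $\chi(\F')>0$ --- a condition independent of $t$. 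Thus any stable pair whose underlying sheaf admits a subsheaf of positive Euler characteristic (these exist in abundance) is $\sigma_{t\omega}$-unstable for \emph{all} $t>0$: there is no PT chamber anywhere in this family. What the $t\to\infty$ limit actually computes is the invariant $L(\beta,n)$ counting objects semistable for the self-dual (limit) stability (Proposition \ref{prop:t}), and the discrepancy between $\PT(X)$ and the $L$-series is exactly the $r=0$ part of the product formula. In the paper this is supplied by a separate, prior wall-crossing in the category of perverse coherent sheaves, namely formula (\ref{PT:first}) quoted from the DT/PT-correspondence literature. Your outline can never recover those $r=0$ factors $\exp\left((-1)^{n-1}J(0,\beta,n)q^nt^{\beta}\right)^{n}$: since the relative phase of a rank-one object and a dimension $\le 1$ object is $t$-independent, rank-zero classes never define walls of the $\sigma_{t\omega}$ family (the wall-crossing exponent carries $\epsilon(r)$ with $\epsilon(0)=0$), so the product you would obtain is missing them.

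The opposite endpoint is misidentified as well. At $t\to +0$ the series does not reduce to the generating function of Gieseker-type invariants $J$; the paper must introduce a \emph{second} one-parameter family of weak stability conditions $(\widehat{Z}_{\omega,\theta},\aA_{\omega}(1/2))$, $0<\theta<1$, on the subcategory $\aA_{\omega}(1/2)$, and run the wall-crossing formula again to decompose $\widehat{\DT}(X)$ into $\prod_{r>0,\beta>0}\exp\left(-J(r,\beta,r)t^{\beta}s^r\right)^{2r}\cdot\sum_{r\in\mathbb{Z}}s^r$; this is where the $n=0$, $r>0$ factors of the theorem come from. You also omit the bookkeeping that makes the whole scheme work: the invariants $\DT_{t\omega}(r,\beta,n)$ count objects of class $(1,-r,-\beta,-n)$ for \emph{all} $r$ (twists in the $\pi^{\ast}\Pic(\mathbb{P}^1)$ direction), assembled into a series with an auxiliary variable $s$, and the final identity is obtained from Corollary \ref{cor:WCF} by extracting the $s^0$-term; the $q^{-n}$ product arises from walls involving negative $r$, not simply from ``subobject versus quotient'' destabilization of a fixed class. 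So the correct architecture is three distinct wall-crossing analyses (PT to $L$ in the perverse heart; $t$-walls in $\aA_{\omega}$; $\theta$-walls in $\aA_{\omega}(1/2)$) rather than one interpolation. Your final paragraph on the role of Conjecture \ref{conj:critical} --- providing critical-locus charts so that the Joyce-Song identities apply to complexes in $\dD$ --- is correct, but the chamber structure feeding into it needs to be rebuilt along these lines.
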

\begin{proof}
The Euler characteristic version of the 
above result is proved when $X$ is a trivial fibration in~\cite{TodK3}. 
The proof proceeds along with the same argument of~\cite{TodK3}
without any major modification, 
using 
the derived category version of~\cite[Theorem~5.18]{JS}
proved in~\cite[Theorem~2.8]{TodHall}
instead of~\cite[Theorem~6.28]{Joy4}. 
We only give an outline of the proof. 

\begin{step}
First wall-crossing. 
\end{step}
By the result of~\cite{BrH}, \cite{TodHall}
(also see~\cite[Theorem~1.3]{Tolim2},~\cite[Theorem~3.11]{Tsurvey}),  
we have the following formula
\begin{align}\label{PT:first}
&\PT(X) = \\ \notag
&\prod_{\beta>0, n>0} 
\exp \left((-1)^{n-1}J(0, \beta, n)  q^n t^{\beta}\right)^n
\left( \sum_{\beta, n} L(\beta, n) q^n t^{\beta} \right). 
\end{align}
The above formula is obtained by 
applying the wall-crossing formula of 
DT type invariants in the 
category of perverse coherent sheaves on $D^b \Coh(X)$, 
which are certain two term complexes of coherent sheaves. 
The invariant $L(\beta, n)$ counts perverse coherent sheaves 
$E\in D^b \Coh(X)$,
 which are semistable with respect to a certain
self dual weak 
stability condition on the perverse heart.
(It was denoted by $\mu_{i\omega}$-limit 
stability in~\cite[Section~3]{Tolim2}, and $Z_{\omega, 1/2}$-stability
in~\cite[Section~5]{Tsurvey}.) 
The object $E$ satisfies the numerical 
condition
\begin{align*}
\ch(E)=(1, 0, -\beta, -n) \in H^{\ast}(X, \mathbb{Q}).
\end{align*}
The precise definition of the invariant $L(\beta, n)$
is available in~\cite[Definition~5.5]{Tsurvey}, 
where it is denoted by $L_{n, \beta}$. 
When $X=S\times \mathbb{P}^1$ for a $K3$ surface $S$, 
its Euler characteristic version is 
available in~\cite[Subsection~4.6]{TodK3}.

\begin{step}
Generating series of DT type invariants. 
\end{step}
For $(r, \beta, n) \in \Gamma_0$, let 
$\mM_{t\omega}(r, \beta, n)$
be the moduli stack of $\sigma_{t\omega}$-semistable 
objects $E \in \aA_{\omega}$ satisfying
\begin{align*}
\cl(E)=(1, -r, -\beta, -n) \in \Gamma. 
\end{align*}
Similarly to~\cite[Lemma~4.13]{TodK3}, 
the above moduli stack is 
realized as a constructible subset of 
an Artin stack $\mM$ locally of finite type. 
Similarly to
the previous subsection and~\cite[Definition~4.10]{TodK3}, 
it 
defines the (generalized) DT type invariant
\begin{align*}
\DT_{t\omega}(r, \beta, n)  \cneq 
\int_{\mM_{t\omega}(r, \beta, n)} \nu \ d\chi
 \in \mathbb{Q}. 
\end{align*}
We form the following generating series
\begin{align*}
\DT_{t\omega}(X)
\cneq \sum_{(r, \beta, n) \in \Gamma_0} \DT_{t\omega}(r, \beta, n)
q^n t^{\beta} s^r. 
\end{align*}
We consider the behavior of the above 
generating series for $t\gg 0$ and $0< t\ll 1$. 
We have the following proposition:
\begin{prop}\label{prop:t}
We have the following formulas
\begin{align}
\notag
&\lim_{t\to \infty} \DT_{t\omega}(X)=
\sum_{r, \beta, n} L(\beta, n) q^n t^{\beta} s^r \\
\notag
&\lim_{t\to +0} 
\DT_{t\omega}(X)= \lim_{t\to +0} \sum_{r, \beta} 
\DT_{t\omega}(r, \beta, 0)t^{\beta} s^r. 
\end{align} 
\end{prop}
\begin{proof}
The same proof of~\cite[Proposition~4.16]{TodK3}
is applied without any major modification. 
\end{proof}

\begin{step}
Wall-crossing in $\aA_{\omega}$. 
\end{step}
We investigate the difference of the generating 
series $\DT_{t\omega}^{}(X)$ for 
$ t=t_0+ 0$ and $t=t_0-0$ for a fixed 
$t_0\in \mathbb{R}_{>0}$. 
We have the following result:
\begin{prop}
We have the following formula
\begin{align}\notag
\lim_{t\to t_{0} + 0}
\DT_{t\omega}(X) 
= &\lim_{t\to t_{0} -0} \DT_{t\omega}(X) 
\\
\notag
 &\cdot \prod_{\begin{subarray}{c} \beta>0, \\
n=\frac{1}{2}rt_0^2 \omega|_{X_p}^2
\end{subarray}} 
\exp\left( (-1)^{n-1}J(r, \beta, r+n)  q^n t^{\beta} s^r \right)^{\epsilon(r)(n+2r)}. 
\end{align}
Here $\epsilon(r)=1$ if $r>0$, $\epsilon(r)=-1$ if $r<0$
and $\epsilon(r)=0$ if $r=0$. 
\end{prop}
\begin{proof}
The same proof of~\cite[Theorem~5.1]{TodK3}
is applied by using the results of~\cite{JS}, \cite{TodHall}
 instead of~\cite{Joy4}. 
We just give an outline of the proof. 
For $v \in \Gamma_0$, let 
$\mM_{t\omega, 0}(v)$ be the moduli stack of 
$\sigma_{t\omega, 0}$-semistable objects
$E \in \bB_{\omega}$ satisfying 
$\cl_0(E)=v$. Similarly to the 
invariant $J(v)$, we have the generalized DT type invariant 
\begin{align*}
J_{t\omega}(v) \cneq \int_{\mM_{t\omega, 0}(v)} \nu \ d\chi \in \mathbb{Q}
\end{align*}
which counts $\sigma_{t\omega, 0}$-semistable objects $E\in \bB_{\omega}$
with $\cl_0(E)=v$. 
The wall-crossing formula~\cite{JS} describes the difference of 
the two limiting series $\DT_{t\omega\pm 0}(X)$ in
terms of the invariants
$J_{t\omega}(v)$, 
where $v\in \Gamma_0$ satisfies  
$Z_{t\omega, 0}(v) \in \mathbb{R}_{>0} \sqrt{-1}$.  
If we write $v=(r, \beta, n)$, the 
latter condition implies that
\begin{align*}
n= \frac{1}{2}r t_0^2 \omega|_{X_p}^2.
\end{align*} 
On the other hand, the proof of~\cite[Corollary~4.27]{TodK3} shows that 
the invariant $J_{t\omega}(v)$ is independent of 
$t$, and coincides with $J(v)$. Now by applying the wall-crossing formula 
in~\cite{JS}, and computing the relevant combinatorial coefficients 
as in~\cite{Tolim2},~\cite{Tcurve1}, we obtain the result. 
\end{proof}
By combining the above result with Proposition~\ref{prop:t}, we have the 
following corollary (cf.~\cite[Corollary~5.2]{TodK3}):
\begin{cor}\label{cor:WCF}
We have the following formula: 
\begin{align*}
&\sum_{(r, \beta, n) \in \Gamma_0}L(\beta, n) q^n t^{\beta} s^r \\
&=\prod_{\beta>0, rn>0} 
\exp\left( (-1)^{n-1}J(r, \beta, r+n) q^n t^{\beta} s^r  \right)^{\epsilon(r)(n+2r)} 
\cdot 
\widehat{\DT}(X)
\end{align*}
where $\widehat{\DT}(X)$ is defined to be
\begin{align*}
\widehat{\DT}(X) \cneq 
\lim_{t\to +0} \sum_{(r, \beta, n) \in \Gamma_0}
\DT_{t\omega}(r, \beta, 0)t^{\beta} s^r
\end{align*}
\end{cor}

\begin{step}
Further wall-crossing. 
\end{step}
The final step is to decompose the series
$\widehat{\DT}(X)$
into a product which involves the invariants $J(v)$. 
Following~\cite[Subsection~3.6]{TodK3}, 
let 
\begin{align*}
\aA_{\omega}(1/2) \subset \aA_{\omega}
\end{align*}
be the subcategory 
given as the extension closure of $Z_{t\omega}$-semistable 
objects $E \in \aA_{\omega}$ for $0<t\ll 1$, satisfying 
\begin{align*}
\lim_{t\to +0} \arg Z_{t\omega}(E)= \frac{\pi}{2}.
\end{align*}
Note that any object $E \in \aA_{\omega}$ which 
contributes to 
the invariant $\DT_{t\omega}(r, \beta, 0)$
is an object in $\aA_{\omega}(1/2)$. 
Similarly to~\cite[Subsection~3.7]{TodK3}, 
we are able to construct a one parameter family of 
weak stability conditions 
\begin{align*}
(\widehat{Z}_{\omega, \theta}, \aA_{\omega}(1/2)), \ 
\theta \in (0, 1)
\end{align*}
on $\aA_{\omega}(1/2)$.
Moreover by~\cite[Proposition~3.17]{TodK3},  
an object $E \in \aA_{\omega}(1/2)$ contributes to 
the invariant $\DT_{t\omega}(r, \beta, 0)$
if and only if it is $\widehat{Z}_{\omega, 1/2}$-semistable, 
and an object $E \in \aA_{\omega}(1/2)$ with $\rank(E)=1$
is 
$\widehat{Z}_{t\omega, \theta}$-semistable 
for $0<\theta \ll 1$ if and only if 
it is an object in $\pi^{\ast} \Pic(\mathbb{P}^1)$. 
Applying the wall-crossing formula, we obtain
\begin{align*}
\widehat{\DT}(X) =
\prod_{r>0, \beta>0}
\exp \left(- J(r, \beta, r) t^{\beta} s^{r} \right)^{2r}
\cdot \sum_{r\in \mathbb{Z}} s^r. 
\end{align*}
The above formula is obtained by the same 
argument of~\cite[Proposition~5.3]{TodK3}
without any major modification. 
Combined with (\ref{PT:first}), Corollary~\ref{cor:WCF}, 
and taking the $s^0$-term, 
we obtain the desired identity (\ref{main:formula}). 
\end{proof}

\subsection{Irreducible curve class case}
When 
the curve class $\beta$ is irreducible, 
we have the following corollary of 
Theorem~\ref{thm:WCF}: 
\begin{cor}
Suppose that $\beta \in H_2(X, \mathbb{Z})^{\pi}$ is 
irreducible. Then for $n\ge 0$,  we have 
\begin{align}\label{d2-d4}
P_{n, \beta} &=\sum_{r\ge 0} (-1)^{n-1} (n+2r) J(r, \beta, r+n) \notag\\
P_{-n, \beta} &=\sum_{r>0} (-1)^{n-1}(n+2r) J(r, \beta, r+n). 
\end{align}
\end{cor}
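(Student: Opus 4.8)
The plan is to read off the two identities directly from the product formula of Theorem~\ref{thm:WCF}, the only real input being that an irreducible class admits no nontrivial splitting into a sum of positive classes. Since a product of exponentials is the exponential of the sum of the exponents, Theorem~\ref{thm:WCF} says $\PT(X)=\exp(S)$ where
\begin{align*}
S &= \sum_{r\ge 0,\ \beta'>0,\ n'\ge 0}(-1)^{n'-1}(n'+2r)\,J(r,\beta',r+n')\,q^{n'}t^{\beta'}\\
&\quad + \sum_{r>0,\ \beta'>0,\ n'>0}(-1)^{n'-1}(n'+2r)\,J(r,\beta',r+n')\,q^{-n'}t^{\beta'}.
\end{align*}
Every monomial on the right carries $t^{\beta'}$ with $\beta'>0$, so $S$ has no constant term and $\exp(S)$ is a well-defined expansion (a Laurent series in $q$ and a power series in $t$) with $t^0$-coefficient equal to $1$.

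The key step is to observe that for irreducible $\beta$ the coefficient of $t^\beta$ in $\PT(X)=\exp(S)$ equals the coefficient of $t^\beta$ in $S$ itself. Indeed, expanding $\exp(S)=\sum_{k\ge 0}S^k/k!$, any monomial produced by $S^k$ with $k\ge 2$ is supported on a class $\beta_1+\cdots+\beta_k$ with each $\beta_i>0$, hence on a \emph{decomposable} curve class; as $\beta$ is irreducible it is not of this form, so only the $k=1$ term contributes to $t^\beta$. Therefore
\begin{align*}
\sum_{n} P_{n,\beta}\,q^n \;=\; [t^\beta]\,\PT(X) \;=\; [t^\beta]\,S.
\end{align*}

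It then remains to match powers of $q$. Taking $[t^\beta]$ of the two sums above, the positive and zero powers of $q$ come only from the first sum and the strictly negative powers only from the second. Hence for $n\ge 0$ one sets $n'=n$ in the first sum to obtain $P_{n,\beta}=\sum_{r\ge 0}(-1)^{n-1}(n+2r)\,J(r,\beta,r+n)$, the case $n=0$ being included automatically because the $r=0$ term vanishes; and for $n>0$ one sets $n'=n$ in the second sum to obtain the coefficient of $q^{-n}$, namely $P_{-n,\beta}=\sum_{r>0}(-1)^{n-1}(n+2r)\,J(r,\beta,r+n)$. These are precisely the two asserted formulas.

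I do not expect a substantive obstacle here, as the statement is a formal consequence of Theorem~\ref{thm:WCF}: the entire content is the passage from the product to its linear part, which is exactly where irreducibility of $\beta$ is used. The only point demanding care is the bookkeeping of the two summation ranges, so that the $q^{n'}$ factors are matched with $P_{n,\beta}$ for $n\ge 0$ and the $q^{-n'}$ factors with $P_{-n,\beta}$ for $n>0$; getting the range of $r$ (namely $r\ge 0$ versus $r>0$) and the sign $(-1)^{n-1}$ correct in each case is what distinguishes the two displayed identities.
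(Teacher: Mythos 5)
Your proposal is correct and matches the intended derivation: the paper states this corollary immediately after Theorem \ref{thm:WCF} with no separate argument, precisely because it follows by writing $\PT(X)=\exp(S)$ and noting that irreducibility of $\beta$ kills all terms $S^k/k!$ with $k\ge 2$, so the $t^{\beta}$-coefficient of $\PT(X)$ equals that of $S$. Your bookkeeping of the $q$-powers, the ranges $r\ge 0$ versus $r>0$, and the vanishing of the $r=0$ term at $n=0$ is exactly what makes the two displayed identities consistent, so there is nothing to add.
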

\begin{rmk}
Note that when the data $(r,\beta,n)$ is chosen so that there does not exist any strictly semistable sheaf, then the invariants $J(r,\beta,n)$ appearing on the right hand side of \eqref{d2-d4} coincide with the invariants computed in \cite[Section 2]{G-S}. 
\end{rmk}

\bibliographystyle{amsalpha}
\bibliography{ref1}

\providecommand{\bysame}{\leavevmode\hbox to3em{\hrulefill}\thinspace}
\providecommand{\MR}{\relax\ifhmode\unskip\space\fi MR }
\providecommand{\MRhref}[2]{%
  \href{http://www.ams.org/mathscinet-getitem?mr=#1}{#2}
}
\providecommand{\href}[2]{#2}
\begin{thebibliography}{MNVW98}

\bibitem[ACRY00]{FM-mirror}
B.~Andreas, G.~Curio, D.~H. Ruiperez, and S-T. Yau, \emph{Fourier-{M}ukai
  transform and {M}irror symmetry for {D}-{B}ranes on {E}lliptic
  {C}alabi-{Y}au}, arXiv:0012196 (2000).

\bibitem[Art69]{Artin2}
M.~Artin, \emph{Algebraization of formal moduli.~{I}.~}, Global Analysis (Paper
  in Honor of K.~Kodaira) (1969), 21--71.

\bibitem[Art74]{Artin}
\bysame, \emph{Algebraic construction of {B}rieskorn's resolutions}, J.~Algebra
  \textbf{29} (1974), 330--348.

\bibitem[Ati58]{a121}
M.~F. Atiyah, \emph{On analytic surfaces with double points}, Proc. Roy. Soc.
  London. Ser. A \textbf{247} (1958), 237--244.

\bibitem[AYCR01]{T-dual}
B.~Andreas, S-T. Yau, G.~Curio, and D.~H. Ruip{\'e}rez, \emph{Fibrewise
  {$T$}-duality for {D}-branes on elliptic {C}alabi-{Y}au}, J. High Energy
  Phys. (2001), no.~3, Paper 20, 13.

\bibitem[BDF07]{Black-T-dual}
I.~Bena, D.~E. Diaconescu, and B.~Florea, \emph{Black string entropy and
  {F}ourier-{M}ukai transform}, J. High Energy Phys. (2007), no.~4, 045, 28 pp.
  (electronic).

\bibitem[Beh09]{a1}
K.~Behrend, \emph{{Donaldson}-{Thomas} invariants via microlocal geometry},
  Annals of Math. \textbf{170} (2009), 1307--1338.

\bibitem[BF97]{BF}
K.~Behrend and B.~Fantechi, \emph{The intrinsic normal cone}, Invent.~Math.~
  \textbf{128} (1997), 45--88.

\bibitem[Bor98]{a125}
R.~E. Borcherds, \emph{Automorphic forms with singularities on
  {G}rassmannians}, Invent. Math. \textbf{132} (1998), no.~3, 491--562.

\bibitem[Bor99]{a116}
\bysame, \emph{The {G}ross-{K}ohnen-{Z}agier theorem in higher dimensions},
  Duke Math. J. \textbf{97} (1999), no.~2, 219--233.

\bibitem[Bri77]{joel}
J.~Brian{\c{c}}on, \emph{Description de {$H{\rm ilb}^{n}C\{x,y\}$}}, Invent.
  Math. \textbf{41} (1977), no.~1, 45--89.

\bibitem[Bri02]{Br1}
T.~Bridgeland, \emph{Flops and derived categories}, Invent. Math. \textbf{147}
  (2002), no.~3, 613--632.

\bibitem[Bri07]{Brs1}
\bysame, \emph{Stability conditions on triangulated categories}, Ann.~of Math
  \textbf{166} (2007), 317--345.

\bibitem[Bri08]{Brs2}
\bysame, \emph{Stability conditions on {$K3$} surfaces}, Duke Math. J.
  \textbf{141} (2008), no.~2, 241--291.

\bibitem[Bri11]{BrH}
\bysame, \emph{Hall algebras and curve-counting invariants}, J. Amer. Math.
  Soc. \textbf{24} (2011), no.~4, 969--998.

\bibitem[Cal]{Cala}
J.~Calabrese, \emph{Donaldson-{T}homas invariants on {F}lops}, preprint,
  arXiv:1111.1670.

\bibitem[dB04]{MVB}
M.~Van den Bergh, \emph{Three-dimensional flops and noncommutative rings}, Duke
  Math. J. \textbf{122} (2004), no.~3, 423--455.

\bibitem[DM]{DM}
F.~Denef and G.~Moore, \emph{Split states, {E}ntropy {E}nigmas, {H}oles and
  {H}alos}, arXiv:hep-th/0702146.

\bibitem[Ful98]{a35}
W.~Fulton, \emph{Intersection theory}, second ed., Ergebnisse der Mathematik
  und ihrer Grenzgebiete. 3. Folge. A Series of Modern Surveys in Mathematics
  [Results in Mathematics and Related Areas. 3rd Series. A Series of Modern
  Surveys in Mathematics], vol.~2, Springer-Verlag, Berlin, 1998.

\bibitem[GS13]{G-S}
A.~Gholampour and A.~Sheshmani, \emph{{D}onaldson-{T}homas invariants of
  2-dimensional sheaves inside threefolds and modular forms}, arXiv:1309.0050
  (2013).

\bibitem[HL12]{Hu-Li}
J.~Hu and W-P Li, \emph{The {D}onaldson-{T}homas invariants under blowups and
  flops}, J. Differential Geom. \textbf{90} (2012), no.~3, 391--411.

\bibitem[HRS96]{HRS}
D.~Happel, I.~Reiten, and S.~O. Smal$\o$, \emph{Tilting in abelian categories
  and quasitilted algebras}, Mem.~Amer.~Math.~Soc, vol. 120, 1996.

\bibitem[HT10]{a10}
D.~Huybrechts and R.~P. Thomas, \emph{Deformation-obstruction theory for
  complexes via {A}tiyah and {K}odaira-{S}pencer classes}, Math. Ann.
  \textbf{346} (2010), no.~3, 545--569.

\bibitem[Huy06]{FMAG}
D.~Huybrechts, \emph{Fourier-{M}ukai transforms in algebraic geometry}, Oxford
  Mathematical Monographs, The Clarendon Press, Oxford University Press,
  Oxford, 2006. \MR{2244106}

\bibitem[Huy16]{HuK3}
D.~Huybrechts, \emph{Lectures on k3 surfaces}, Cambridge University press,
  2016.

\bibitem[Joy08]{Joy4}
D.~Joyce, \emph{Configurations in abelian categories. {IV}. {I}nvariants and
  changing stability conditions}, Adv. Math. \textbf{217} (2008), no.~1,
  125--204.

\bibitem[JS12]{JS}
D.~Joyce and Y.~Song, \emph{A theory of generalized {D}onaldson-{T}homas
  invariants}, Mem. Amer. Math. Soc. \textbf{217} (2012), no.~1020, iv+199.

\bibitem[KKP]{KKP}
S.~Katz, A.~Klemm, and R.~Pandharipande, \emph{On the motivic stable pairs
  invariants of k3 surfaces}, preprint, arXiv:1407.3181.

\bibitem[KM90]{a117}
S.~S. Kudla and J.~J. Millson, \emph{Intersection numbers of cycles on locally
  symmetric spaces and {F}ourier coefficients of holomorphic modular forms in
  several complex variables}, Inst. Hautes \'Etudes Sci. Publ. Math. (1990),
  no.~71, 121--172.

\bibitem[KMW12]{Klemm:2012sx}
A.~Klemm, J.~Manschot, and T.~Wotschke, \emph{Quantum geometry of elliptic
  {C}alabi-{Y}au manifolds}, Commun. Number Theory Phys. \textbf{6} (2012),
  no.~4, 849--917.

\bibitem[KS]{K-S}
M.~Kontsevich and Y.~Soibelman, \emph{Stability structures, motivic
  {D}onaldson-{T}homas invariants and cluster transformations}, preprint,
  arXiv:0811.2435.

\bibitem[KT14]{a112}
M.~Kool and R.~P. Thomas, \emph{Reduced classes and curve counting on surfaces
  {I}: theory}, Algebr. Geom. \textbf{1} (2014), no.~3, 334--383.

\bibitem[KY00]{a79}
T.~Kawai and K.~Yoshioka, \emph{String partition functions and infinite
  products}, Adv. Theor. Math. Phys. \textbf{4} (2000), no.~2, 397--485.

\bibitem[Lo13]{JLo2}
J.~Lo, \emph{Moduli of {PT}-{S}emistable {O}bjects {II}}, Trans
  Amer.~Math.~Soc.~ \textbf{365} (2013), 4539--4573.

\bibitem[LR01]{a97}
A-M. Li and Y.~Ruan, \emph{Symplectic surgery and {G}romov-{W}itten invariants
  of {C}alabi-{Y}au 3-folds}, Invent. Math. \textbf{145} (2001), no.~1,
  151--218.

\bibitem[LW15]{a87}
J.~Li and B.~Wu, \emph{Good degeneration of {Q}uot-schemes and coherent
  systems}, Comm. Anal. Geom. \textbf{23} (2015), no.~4, 841--921. \MR{3385781}

\bibitem[LY06]{a96}
C-H Liu and S-T Yau, \emph{Extracting {G}romov-{W}itten invariants of a
  conifold from semi-stable reduction and relative {GW}-invariants of pairs},
  Mirror symmetry. {V}, AMS/IP Stud. Adv. Math., vol.~38, Amer. Math. Soc.,
  Providence, RI, 2006, pp.~441--456.

\bibitem[MNVW98]{E-string}
J.~A. Minahan, D.~Nemeschansky, C.~Vafa, and N.~P. Warner, \emph{{${\rm
  E}$}-strings and {$N=4$} topological {Y}ang-{M}ills theories}, Nuclear Phys.
  B \textbf{527} (1998), no.~3, 581--623.

\bibitem[MP13]{a90}
D.~Maulik and R.~Pandharipande, \emph{Gromov-{W}itten theory and
  {N}oether-{L}efschetz theory}, A celebration of algebraic geometry, Clay
  Math. Proc., vol.~18, Amer. Math. Soc., Providence, RI, 2013, pp.~469--507.

\bibitem[MPT10]{MPT}
D.~Maulik, R.~Pandharipande, and R.~P. Thomas, \emph{Curves on {K}3 surfaces
  and modular forms}, J.~Topol.~ \textbf{3} (2010), 937--996.

\bibitem[Muk81]{Mu1}
S.~Mukai, \emph{Duality between ${D}({X})$ and ${D}(\hat{X})$ with its
  application to picard sheaves}, Nagoya Math.~J.~ \textbf{81} (1981),
  101--116.

\bibitem[Nak99]{a127}
H.~Nakajima, \emph{{Lectures on Hilbert Schemes of Points on Surfaces}},
  American Mathematical Soc Lecture series (1999).

\bibitem[NN11]{NN}
K.~Nagao and H.~Nakajima, \emph{Counting invariant of perverse coherent sheaves
  and its wall-crossing}, Int.~Math.~Res.~Not.~ (2011), 3855--3938.

\bibitem[PT09]{a17}
R.~Pandharipande and R.~P. Thomas, \emph{Curve counting via stable pairs in the
  derived category}, Invent. Math. \textbf{178} (2009), no.~2, 407--447.

\bibitem[PT10]{a44}
\bysame, \emph{Stable pairs and {BPS} invariants}, J. Amer. Math. Soc.
  \textbf{23} (2010), no.~1, 267--297.

\bibitem[PT14]{KKVPT}
\bysame, \emph{{The Katz-Klemm-Vafa conjecture for K3 surfaces}},
  arXiv:1404.6698 (2014).

\bibitem[Sie04]{Siebert2}
B.~Siebert, \emph{Virtual fundamental classes, global normal cones and
  {F}ulton's canonical classes}, Frobenius manifolds, Aspects Math., E36,
  Vieweg, Wiesbaden, 2004, pp.~341--358.

\bibitem[Tho00]{a20}
R.~P. Thomas, \emph{A holomorphic {Casson} invariant for {Calabi}-{Yau}
  3-folds, and bundles on {K3} Þbrations}, J. Differential Geom. \textbf{54}
  (2000), 367--438.

\bibitem[Tod]{TodHall}
Y.~Toda, \emph{Hall algebras in the derived category and higher rank {DT}
  invariants}, preprint, arXiv:1601.07519.

\bibitem[Tod10a]{Tcurve1}
Y.~Toda, \emph{Curve counting theories via stable objects {I}. {DT}/{PT}
  correspondence}, J. Amer. Math. Soc. \textbf{23} (2010), no.~4, 1119--1157.

\bibitem[Tod10b]{Tolim2}
\bysame, \emph{Generating functions of stable pair invariants via
  wall-crossings in derived categories}, New developments in algebraic
  geometry, integrable systems and mirror symmetry ({RIMS}, {K}yoto, 2008),
  Adv. Stud. Pure Math., vol.~59, Math. Soc. Japan, Tokyo, 2010, pp.~389--434.

\bibitem[Tod12a]{Tsurvey}
\bysame, \emph{Stability conditions and curve counting invariants on
  {C}alabi-{Y}au 3-folds}, Kyoto J. Math. \textbf{52} (2012), no.~1, 1--50.

\bibitem[Tod12b]{TodK3}
\bysame, \emph{Stable pairs on local {K}3 surfaces}, J. Differential Geom.
  \textbf{92} (2012), no.~2, 285--371.

\bibitem[Tod13a]{TodBg}
Y.~Toda, \emph{Bogomolov-{G}ieseker type inequality and counting invariants},
  Journal of Topology \textbf{6} (2013), 217--250.

\bibitem[Tod13b]{Tcurve2}
Y.~Toda, \emph{Curve counting theories via stable objects {II}: {DT}/nc{DT}
  flop formula}, J. Reine Angew. Math. \textbf{675} (2013), 1--51.

\end{thebibliography}

\noindent {\tt{amingh@math.umd.edu}} \\
\noindent {\tt{University of Maryland}} \\
\noindent {\tt{College Park, MD 20742-4015, USA}} \\

\noindent {\tt{artan@mit.edu}} \\
\noindent {\tt{Massachusetts Institute of Technology (MIT), Department of Mathematics}} \\
\noindent {\tt{Building 2, Room 2-247, 77 Massachusetts Avenue, Cambridge, MA, USA 02139-4307}} \\

\noindent {\tt{yukinobu.toda@ipmu.jp}} \\
\noindent {\tt{Institute for the Physics and Mathematics of the Universe}} \\
\noindent {\tt{University of Tokyo, 5-1-5 Kashiwanoha, Kashiwa, 277-8583, Japan}} \\

\end{document}